\documentclass[11pt,onecolumn,oneside,leqno,final]{amsart}

\usepackage{hyperref}
\hypersetup{
  colorlinks = true,
  urlcolor = blue,
  linkcolor = blue,
  citecolor = red
}
\setcounter{tocdepth}{1}

\usepackage{color}

\usepackage{enumerate}

\usepackage{amsfonts}
\usepackage{amsthm, amssymb}
\usepackage[abbrev]{amsrefs}

\newtheorem{thm}{Theorem}[section]
\newtheorem{prop}[thm]{Proposition}
\newtheorem{corl}[thm]{Corollary}
\newtheorem{lemma}[thm]{Lemma}

\newtheorem*{claim*}{Claim}
\newtheorem*{remark}{Remark}

\theoremstyle{definition}
\newtheorem{define}[thm]{Definition}

\newcommand\xra{\xrightarrow}

\newcommand\ten\otimes
\newcommand\isom\simeq

\newcommand\Q{\mathbb{Q}}
\newcommand\Z{\mathbb{Z}}
\newcommand\R{\mathbb{R}}
\newcommand\C{\mathbb{C}}
\newcommand\N{\mathbb{N}}

\newcommand\G{\mathbb{G}}
 % was: some accent
 % was: paragraph sign

\newcommand\p{\mathfrak{p}}

\newcommand\calB{\mathcal{B}}
\newcommand\calC{\mathcal{C}}

\newcommand\calH{\mathcal{H}}
\newcommand\calN{\mathcal{N}}
\newcommand\calO{\mathcal{O}}

\newcommand\eps{\varepsilon}

\newcommand\abs[1]{\vert#1\vert}

\newcommand\ang[1]{\langle#1\rangle}

\newcommand\ncv{\vol^{\rm nc}}

\DeclareMathOperator{\vol}{vol}

\DeclareMathOperator{\Min}{Min}
\DeclareMathOperator{\Fix}{Fix}

\DeclareMathOperator{\Isom}{Isom}

\DeclareMathOperator{\rk}{rk}

\DeclareMathOperator{\hrank}{-rank}

\DeclareMathOperator{\GL}{GL}
\DeclareMathOperator{\SO}{SO}
\DeclareMathOperator{\SL}{SL}

\title[On the number of finite subgroups of a lattice]{On the number of finite \\subgroups of a lattice}

\author{Iddo Samet}

\address{University of Illinois at Chicago\\
Department of Mathematics, Statistics, and Computer Science\\
Chicago, IL 60607}
\email{samet@math.uic.edu}

\date{September 2012}

\begin{document}

\begin{abstract}
  We show that the number of conjugacy classes of maximal finite subgroups of a lattice in a semisimple Lie group is linearly bounded by the covolume of the lattice. Moreover, for higher rank groups, we show that this number grows sublinearly with covolume. We obtain similar results for isotropy subgroups in lattices. Geometrically, this yields volume bounds for the number of strata in the natural stratification of a finite-volume locally symmetric orbifold.
\end{abstract}

\maketitle

\section{Introduction}\label{sec:intro}

Several families of infinite groups share the property that they have finitely many conjugacy classes of finite subgroups (henceforth, \emph{the finiteness property}).
In the realm of linear groups this was first proven by Jordan for $\GL_n(\Z)$. Using their reduction theory, Borel and Harish-Chandra generalized Jordan's theorem and proved the finiteness property holds for arithmetic groups of the form $\G(\Z)$, where $\G$ is a linear algebraic group defined over $\Q$ \cite{borel-reduction-theory}. This result was extended by Grunewald and Platonov to general arithmetic groups, as well as to their finite extensions \cite{grunewald-platonov-finite-ext}.
Other families of groups known to enjoy this property are $\mathrm{Aut}(F_n)$, $\mathrm{Out}(F_n)$ (cf.\ \cite{culler-finite-out-fn}), mapping class groups (cf.\ \cite{bridson-finiteness}), word hyperbolic groups, and CAT(0) groups (cf.\ \cite{bridson-haefliger}).

For a family of groups that has the finiteness property, it is natural to seek asymptotic bounds for the number of conjugacy classes of finite subgroups. For example, if $\Gamma$ is a group with the finiteness property, and $\Gamma_n < \Gamma$ is a sequence of finite-index subgroups, then by elementary group theory
\begin{equation*}
  F(\Gamma_n) \leq F(\Gamma) \cdot [\Gamma : \Gamma_n],
\end{equation*}
where $F(\cdot)$ is the number of conjugacy classes of finite subgroups. We generalize this ``linear'' bound to a families of lattices in a given semisimple Lie group. Naturally, the index of a group is replaced by its covolume. For reasons that will be made clear later, we only bound the number of conjugacy classes of \emph{maximal} finite subgroups.

\begin{thm}\label{thm:finite-subgroups}
  Let $G$ be a connected semisimple Lie group with finite center and without compact factors. For a lattice $\Gamma < G$, denote by $f(\Gamma)$ the number conjugacy classes of maximal finite subgroups in $\Gamma$. Then
  \begin{equation*}
    f(\Gamma) \leq c \cdot \vol(G/\Gamma),
  \end{equation*}
  with a constant $c = c(G)$.
\end{thm}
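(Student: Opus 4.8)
The plan is to combine the Cartan fixed-point theorem with the Margulis lemma and a packing argument; the hard part will be controlling the strata on which $\Gamma$ acts with a large isotropy group.

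\emph{Step 1 (reduction to strata).} Let $X=G/K$ be the symmetric space and $M=\Gamma\backslash X$. Since $X$ is complete and nonpositively curved, every finite subgroup of $\Gamma$ fixes a point of $X$, and each stabilizer $\Gamma_x=\Gamma\cap gKg^{-1}$ ($x=gK$) is finite because $\Gamma$ is discrete and $gKg^{-1}$ compact. Hence a maximal finite subgroup $F$ is itself a stabilizer, $F=\Gamma_x$ for any $x\in\Fix(F)$; conversely if $\Gamma_x$ is maximal then $\Gamma_y=\Gamma_x$ for \emph{every} $y\in\Fix(F)$ (as $F\le\Gamma_y$ with $\Gamma_y$ finite), so $\Fix(F)$ is a single stratum of the isotropy stratification, and—$\Fix(F)$ being convex hence connected—$N_\Gamma(F)/F$ acts freely on it. Thus $f(\Gamma)$ is the number of maximal strata of $M$. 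The basic rigidity point is: if $\bar x,\bar y$ are points of the $\epsilon_0$-thick part with maximal stabilizer and $d_M(\bar x,\bar y)<\rho_0$ for suitable $\rho_0=\rho_0(G)<\epsilon_0(G)$, then lifting to $x,y\in X$ at distance $<\rho_0$ shows every element of $\Gamma_y$ displaces $x$ by $<2\rho_0$, so $\Gamma_y\le\Gamma_{\epsilon_0}(x)$; in the thick part $\Gamma_{\epsilon_0}(x)$ is a finite group containing the maximal subgroup $\Gamma_x$, hence $\Gamma_{\epsilon_0}(x)=\Gamma_x=\Gamma_y$. Consequently distinct maximal strata are $\rho_0$-separated in the thick part, and there the isotropy group on a $\rho_0$-ball is forced to be the local group $\Gamma_{\epsilon_0}(\cdot)$.

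\emph{Step 2 (thin part).} Decompose $M$ at the Margulis constant into the thick part and finitely many thin components (Margulis tubes around short closed geodesics and cusp neighbourhoods); their number is $\le c_1(G)\vol(M)$ by the standard argument. On each thin component the relevant subgroup $\Lambda$ of $\Gamma$ is virtually nilpotent, essentially of the form $L\rtimes Q$ with $L$ the nilpotent ``translation'' part and $Q$ a finite ``rotation'' part preserving a lattice in $L$; by Minkowski's bound $|Q|\le\mu(\dim G)$. Elementary group theory (finite torsion subgroups of $L$ are absorbed into every maximal finite subgroup and cost nothing toward the count) then bounds the number of conjugacy classes of maximal finite subgroups of $\Lambda$ by a constant $N(G)$, so the thin part contributes at most $N(G)c_1(G)\vol(M)$ classes.

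\emph{Step 3 (packing the thick part).} For each maximal stratum meeting the thick part choose a representative point $\bar x$; by Step 1 these are $\rho_0$-separated, so the balls $B_M(\bar x,\rho_0/2)=\Gamma_x\backslash B_X(x,\rho_0/2)$ are pairwise disjoint and $\vol B_M(\bar x,\rho_0/2)\ge v(\rho_0/2)/|\Gamma_x|$, whence
\begin{equation*}
  \sum_{[F]\ \mathrm{thick}}\frac{1}{|F|}\ \le\ \frac{\vol(M)}{v(\rho_0/2)}.
\end{equation*}
This bounds by $c_2(G)\vol(M)$ the number of thick maximal strata whose isotropy has order below any fixed threshold.

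\emph{Step 4 (the main obstacle: large isotropy in the thick part).} The remaining case is maximal strata of the thick part with $|F|$ arbitrarily large—e.g.\ a point fixed by a cyclic group of huge order whose ``axis'' avoids all cusps and short geodesics—for which the packing volume $v(\rho_0/2)/|F|$ is too small to be useful. This is the technical heart. I would treat such a stratum as, transversally, an extra thin end of $M$: its isotropy group's faithful normal representation produces a highly folded tube, and I would set up an \emph{essential volume} $\evol(M)$ (as the macro $\evol$ in the preamble suggests the paper does), discarding exactly these over-symmetric tubes—or equivalently ``drilling'' each of them and replacing it by a cusp—so that each such stratum forces a definite amount of (ordinary) volume to reappear elsewhere, exactly the mechanism that makes the number of cusps of $M$ at most $c_3(G)\vol(M)$. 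Combining Steps 2–4 gives $f(\Gamma)\le c(G)\vol(M)$. The delicate point throughout Step 4 is to make the drilling uniform over all of $G$ (not just rank one, where a collar/Gauss--Bonnet-type input would suffice) and over all configurations of large finite isotropy groups; I expect this to be where most of the work lies.
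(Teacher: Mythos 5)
Your Steps 1 and 3 capture (modulo details) two real ingredients of the paper's argument: the observation that maximal finite subgroups are exactly the maximal isotropy groups, and the packing argument that bounds, via a Kazhdan--Margulis-type lemma (Proposition~\ref{prop:mini-km}), the number of non-conjugate maximal $H$ of \emph{bounded order} by $\vol(M)$. But the obstacle you flag in Step 4 is a genuine gap, and the ``essential volume / drilling'' idea you sketch there is not what the paper does, nor is it developed far enough to assess. The paper's resolution is an induction on the dimension of a totally geodesic submanifold: if $|H|$ is large, then by the stabilization lemma (Lemma~\ref{lem:stablizing}) $H$ contains a \emph{stable} element $\alpha$ of small displacement, and $\bigcap_{\gamma\in H}\Min(\gamma\alpha\gamma^{-1})$ is a proper \emph{stable singular submanifold} $Z\subsetneq X$ stabilized by $H$. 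Counting the conjugacy classes of maximal finite $H$ stabilizing $Z$ reduces to the same problem for $\Gamma_Z/\Gamma_Z^1$ acting on the lower-dimensional $Z$, and Proposition~\ref{prop:inductive-vol-bound} (a Gromov-type estimate, cf.\ \cite{bgs-book}*{Thm 12.11}) controls $\sum_Z \ncv_{\geq\eps}(Z/\Gamma_Z)$ by $\ncv_{\geq\eps'}(X/\Gamma)$, so the induction closes. This single mechanism subsumes both your ``large isotropy in the thick part'' problem and the thin-part contributions; the paper never decomposes $M$ into thin components.

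Two further points in your Step 2 would not survive scrutiny as stated. First, the assertion that the $\eps$-thin part of a general finite-volume locally symmetric $M$ has at most $c_1(G)\vol(M)$ connected components, each modelled on a Margulis tube or cusp, is a rank-one picture; in higher rank the thin part is more complicated, and this claim is not in the paper's toolbox. Second, even granting the local structure $\Lambda\cong L\rtimes Q$, the maximal finite subgroups of $\Gamma$ whose fixed set meets a thin component need not lie in $\Lambda$, so a bound on $f(\Lambda)$ does not directly bound the number of strata meeting that component. The paper avoids this by working with stable singular submanifolds (which record where small displacement forces commuting structure) rather than with a geometric thick/thin decomposition, and by using the $\ncv$-volume and the Clifford splitting to handle Euclidean directions (your Step 2's $L\rtimes Q$ issue is absorbed into Step 1 of the paper's proof of Theorem~\ref{thm:finite-subgroups-geometric} via Auslander's finiteness of crystallographic groups, applied to the Clifford factor, not to thin components).
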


In particular, this theorem establishes the finiteness property for lattices in these semisimple Lie groups. We remark, that if $G$ is simple, mere finiteness already follows from the aforementioned results, unless $\Gamma$ is a non-unform lattice in $SO(d,1)$ ($d \geq 3$) or $SU(d,1)$. Indeed, if $G$ has higher rank, or is $Sp(d,1)$ or $F_4^{-20}$, every lattice is arithmetic. If $G=SO(2,1)$, or $G$ has rank one and $\Gamma$ is a uniform lattice, then $\Gamma$ is word hyperbolic.

\medskip
In some cases, we can make a stronger statement on the asymptotical growth of the number of conjugacy classes of maximal finite subgroups.

\begin{thm}\label{thm:finite-subgroups-high-rank}
  Let $G$ be as in Theorem \ref{thm:finite-subgroups}, and assume moreover, that $\R\!\hrank(G) \geq 2$ and $G$ has Kazhdan's property (T). If $\Gamma_n$ is a sequence of pairwise non-conjugate irreducible lattices in $G$, then
  \begin{equation*}
    \lim_n \frac{f(\Gamma_n)}{\vol(G/\Gamma_n)} = 0.
  \end{equation*}
\end{thm}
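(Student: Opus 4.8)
The plan is to combine two inputs. The first is the geometry underlying Theorem~\ref{thm:finite-subgroups}: every maximal finite subgroup of a lattice gives rise to a point of the locally symmetric orbifold $M_n := \Gamma_n\backslash X$ (where $X=G/K$) lying in the thin part, and the covering/packing argument that proves Theorem~\ref{thm:finite-subgroups} in fact localizes the count to a neighbourhood of the singular set. The second is that, under the present hypotheses, this thin part is forced to collapse: by the work on Benjamini--Schramm convergence for sequences of lattices (Abert--Bergeron--Biringer--Gelander--Nikolov--Raimbault--Samet), a sequence of pairwise non-conjugate irreducible lattices in a higher-rank group with property (T) Benjamini--Schramm converges to $X$, so the volume of any fixed-radius thin part is $o(\vol(M_n))$. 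Note first that the statement is non-vacuous: since the $\Gamma_n$ are pairwise non-conjugate and, by Wang's finiteness theorem, there are only finitely many conjugacy classes of lattices of bounded covolume, we have $\vol(M_n)\to\infty$.

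I would begin by setting up the correspondence between conjugacy classes and points. A finite subgroup $F<\Gamma$ fixes a point of $X$, and a maximal $F$ equals the (necessarily finite) stabilizer of each of its fixed points; hence the assignment $[F]\mapsto x_{[F]}$, sending a conjugacy class to the image in $M_n$ of the fixed-point set of a representative, is well defined and \emph{injective}. Fixing a Margulis constant $\eps_0=\eps_0(G)$, each $x_{[F]}$ lies in the $\eps_0$-thin part $M_n^{\le\eps_0}$. Running the covering argument of Theorem~\ref{thm:finite-subgroups} inside $M_n^{\le\eps_0}$, one covers these points by balls $B(x_{[F]},\eps_0/4)$ of multiplicity bounded by $D=D(G)$, and for $\eps_0$ small the volume of such a ball is comparable, up to a factor depending only on $G$, to $\vol_X(B(\eps_0/4))/|F|$. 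Cutting the count at a threshold $M$ to be chosen later, the classes with $|F|\le M$ then contribute, by packing,
\begin{equation*}
  \#\{[F]:\ |F|\le M\}\ \le\ c_0(G)\,M\,\vol\!\big(M_n^{\le\eps_0}\big).
\end{equation*}

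The remaining classes, with $|F|>M$, are the crux: once a single maximal finite subgroup occupies a negligible amount of volume, packing is useless, and one must bound the number of conjugacy classes of \emph{large} finite subgroups by the covolume divided by (roughly) their order. Here I would use that $\Gamma_n$ is arithmetic (Margulis, since $\R\!\hrank(G)\ge 2$): the structure of finite subgroups of $G$ (Jordan's theorem, reducing to abelian subgroups of a maximal torus of $K$) bounds $|F|$ in terms of the torsion carried by the number-theoretic data defining $\Gamma_n$ --- roots of unity and class numbers of the relevant orders --- and these quantities enter Prasad's volume formula with at least a linear weight, yielding an estimate of the form
\begin{equation*}
  \#\{[F]:\ |F|>M\}\ \le\ \frac{c_1(G)}{M}\,\vol(M_n).
\end{equation*}
An alternative is an induction on $\dim G$, letting $N_{\Gamma}(F)$ act as a lattice on the totally geodesic submanifold $\Fix(F)$ and reducing to the case in which $\Fix(F)$ has compact (or toral) isometry group; but that base case again seems to require the same arithmetic input, so I expect this to be the main obstacle either way.

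Finally, invoking Benjamini--Schramm convergence of $M_n$ to $X$ (using property (T), $\R\!\hrank(G)\ge 2$, irreducibility, and pairwise non-conjugacy), one has $\vol(M_n^{\le\eps_0})/\vol(M_n)\to 0$ for every $\eps_0$. Adding the two displays and dividing by $\vol(M_n)$ gives, for each fixed $M$,
\begin{equation*}
  \limsup_n\ \frac{f(\Gamma_n)}{\vol(M_n)}\ \le\ \limsup_n\Big(c_0(G)\,M\cdot\frac{\vol(M_n^{\le\eps_0})}{\vol(M_n)}\Big)+\frac{c_1(G)}{M}\ =\ \frac{c_1(G)}{M},
\end{equation*}
and letting $M\to\infty$ yields the theorem. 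To summarize the difficulty: the ``small $F$'' half is the thin-part refinement of Theorem~\ref{thm:finite-subgroups} combined with a known convergence result, whereas the ``large $F$'' half requires genuinely arithmetic, quantitative control on conjugacy classes of large finite subgroups, uniform over all irreducible lattices in $G$, and that is where the real work lies.
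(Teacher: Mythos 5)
Your high-level strategy --- localize the count of conjugacy classes to the thin part of $M_n$, then invoke the Abert et al.\ result (the paper cites Corollary 4.10 of \cite{samurai-l2}) that $\vol_{\leq R}(X/\Gamma_n)/\vol(X/\Gamma_n) \to 0$ under the stated hypotheses --- is precisely the one used in the paper, and your remark that pairwise non-conjugacy implies $\vol(M_n)\to\infty$ via Wang's finiteness is the same implicit reduction. However, there is a genuine gap in how you propose to handle the classes with $|F|$ large. You treat this as an obstacle requiring arithmetic input (Jordan's theorem, Prasad's volume formula, number-theoretic control of torsion), and you explicitly acknowledge that you do not carry it out. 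The paper does not need any such input. The relevant dichotomy in the proof of Theorem~\ref{thm:finite-subgroups-geometric} is not \emph{small $|F|$ versus large $|F|$}, but rather \emph{whether or not $F$ stabilizes a stable singular proper submanifold}. If it does (Step 2), one passes to the stabilizer acting on that smaller submanifold and proceeds by induction on dimension, with Proposition~\ref{prop:inductive-vol-bound} controlling the total volume contribution. If it does not (Step 3), then a Margulis-type argument forces every element of $F$ to have order at most $M$ (otherwise a high power would be stable and its conjugates under $F$ would intersect to give a stable singular $F$-invariant proper submanifold, a contradiction), which yields the \emph{uniform} bound $|F| \leq M^n m$ depending only on $\dim X$. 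With this uniform size bound, Lemma~\ref{lem:inj-point-near-small-group} produces a nearby point with a definite lower bound on $d_\Gamma$, independent of $|F|$, so the packing argument goes through without the factor of $M$ that appears in your estimate.

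Two further issues with the arithmetic route as you sketch it: first, your claimed estimate $\#\{[F]:|F|>M\} \leq c_1(G)\vol(M_n)/M$ is exactly the kind of quantitative control that is not available off the shelf --- the Agol--Belolipetsky--Storm--Whyte bound quoted in the introduction is polylogarithmic in volume and bounds $|F|$, not the number of conjugacy classes; second, making Prasad's formula produce a linear-in-$M$ weight uniformly over all irreducible lattices in $G$ (not just a fixed commensurability class) is itself a nontrivial task. So while your ``small $F$'' half and the final Benjamini--Schramm step are essentially correct, the proposal is incomplete precisely where you say the real work lies, and the paper's actual resolution of that point is purely geometric and much more elementary than what you envision.
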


The statement of the theorem does not hold for $G = \SO(d,1)$ ($d \geq 2$). Indeed, in section \ref{sec:so}, we exhibit a sequence of lattices $\Gamma_n < G$ such that $\vol(G / \Gamma_n) \to \infty$ and
  \begin{equation*}
    \liminf_n \frac{f(\Gamma_n)}{\vol(G/\Gamma_n)} > 0.
  \end{equation*}
The asymptotical behavior for lattices in other rank-one groups, as well as that for irreducible lattices in products of rank-one groups, remains unsettled.

\medskip

The proof of Theorem \ref{thm:finite-subgroups} is based on an analysis of the action of $\Gamma$ on the associated symmetric space $X = K \backslash G$. Each finite subgroup of $\Gamma$ fixes a connected complete totally geodesic submanifold of $X$. This establishes a one-to-one correspondence between totally geodesic submanifolds fixed by some finite subgroup, and \emph{isotropy} subgroups of $\Gamma$. In this correspondence, maximal finite subgroups correspond to minimal fixed submanifolds. The geometric analogous of Theorem \ref{thm:finite-subgroups} is stated in section \ref{sec:proofs} and proved by techniques of non-positive curvature.

\medskip
From the geometric point of view, we can extend this result. The quotient orbifold $M = X / \Gamma$ has a natural stratification whose strata are the sets
\begin{equation*}
  M_{[H]} = \{  x \in M \; | \; [ \Gamma_{\tilde x} ] = [ H ] \},
\end{equation*}
where $H < \Gamma$ is a finite subgroup, $\Gamma_{\tilde x}$ is the stabilizer in $\Gamma$ of a lift of $x$, and brackets represent conjugacy classes. In this setting, a conjugacy class of a maximal finite subgroup corresponds to a stratum that does not contain any other stratum in its closure. The number of strata equals the number of conjugacy classes of isotropy subgroups in $\Gamma$. In section \ref{sec:isotropy} we prove the following extension of Theorems \ref{thm:finite-subgroups} and \ref{thm:finite-subgroups-high-rank}, which we state here in its geometric form.

\begin{thm}\label{thm:strata}
   Let $X$ be a global symmetric space $X$ of non-compact type. Let $M = X / \Gamma$ be an $X$-orbifold. Denote by $s(M)$ the number of strata in the natural orbifold stratification of $M$. Then
   \begin{equation*}
     s(M) \leq c \cdot \vol(M),
   \end{equation*}
   with a constant $c = c(X)$. If the rank of $X$ is at least 2, and $\Isom(X)$ has property (T), then for any sequence $M_n$ of irreducible $X$-orbifolds that are pairwise non-isometric we have
   \begin{equation*}
     \lim_{n \to \infty} \frac{s(M_n)}{\vol(M_n)} = 0.
   \end{equation*}
\end{thm}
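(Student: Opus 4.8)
My plan is to reduce the statement to Theorems \ref{thm:finite-subgroups} and \ref{thm:finite-subgroups-high-rank} by comparing the total number of strata of $M$ with the number of minimal ones. Both assertions are vacuous when $\vol(M)=\infty$, so I assume $\Gamma$ is a lattice. Recall from the introduction that $s(M)$ is the number of conjugacy classes of isotropy subgroups of $\Gamma$, that $f(\Gamma)$ --- the number of conjugacy classes of maximal finite subgroups --- is the number of minimal strata, and that every finite subgroup of $\Gamma$ lies in a maximal one (for instance because $\Gamma$ is virtually torsion-free). The whole theorem will follow from the single inequality
\begin{equation*}
  s(M) \;\le\; C(\dim X)\cdot f(\Gamma),
\end{equation*}
through the geometric counterparts of Theorems \ref{thm:finite-subgroups} and \ref{thm:finite-subgroups-high-rank}, applied in the second case to the lattices $\Gamma_n$ with $M_n=X/\Gamma_n$, which are irreducible and, since the $M_n$ are pairwise non-isometric, pairwise non-conjugate.

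To prove the inequality I choose, for every isotropy subgroup $H$ of $\Gamma$, a maximal finite subgroup $\widehat H \supseteq H$; this induces a map from conjugacy classes of isotropy subgroups to conjugacy classes of maximal finite subgroups, and it suffices to bound each of its fibres by $C(\dim X)$. Fix a representative $\widehat H$ of a class in the image, and a point $p \in \Fix(\widehat H)$ --- which is non-empty since $X$ is a complete CAT(0) space. If $[H]$ lies over $[\widehat H]$, conjugate so that $H \subseteq \widehat H$; then $p \in \Fix(H)$. The set of points of the connected totally geodesic submanifold $\Fix(H)$ whose $\Gamma$-stabilizer is strictly larger than $H$ is the union of the proper totally geodesic submanifolds $\Fix(K)$ over the finite subgroups $K \supsetneq H$; this is a countable, hence meagre, union, so there exist points $x \in \Fix(H)$ arbitrarily close to $p$ with $\Gamma_x = H$. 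For $x$ close enough to $p$, proper discontinuity forces $\Gamma_x \subseteq \Gamma_p = \widehat H$; and since an isometry fixing $p$ acts near $p$ through its differential, writing $x = \exp_p(v)$ we obtain that $\Gamma_x$ is the stabilizer of $v \in T_pX$ under the (faithful, orthogonal) isotropy representation of $\widehat H$. As an $\widehat H$-conjugacy of such stabilizers is in particular a $\Gamma$-conjugacy, the fibre over $[\widehat H]$ injects into the set of conjugacy classes of isotropy subgroups of a single finite orthogonal action on $\R^{\dim X}$.

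What remains --- and what I expect to be the main obstacle --- is the purely group-theoretic assertion that a finite group of orthogonal transformations of $\R^n$ has at most $C(n)$ conjugacy classes of isotropy subgroups, where $C(n)$ does not depend on the group. The difficulty is that the number of distinct fixed subspaces is genuinely unbounded (already for dihedral subgroups of $O(2)$), so it is only the number of their orbits that is controlled; $C(n)$ itself is necessarily large, since the sign-change action of $(\Z/2)^n$ already yields $2^n$ conjugacy classes. I would prove the assertion by induction on $n$ using slice representations --- for $x \neq 0$ the isotropy subgroup at $x$ acts orthogonally on $x^{\perp} \cong \R^{n-1}$ --- together with Jordan's theorem, which supplies an abelian normal subgroup of index bounded in terms of $n$; the isotropy subgroups of that abelian group, being the kernels of partial products of the characters occurring in $\R^n$, number at most $2^n$, and a routine argument bounds the isotropy subgroups of the full group in terms of these up to the bounded-index ambiguity. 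Substituting this bound into the reduction of the previous paragraph gives $s(M) \le C(\dim X)\,f(\Gamma)$, and with it the theorem.
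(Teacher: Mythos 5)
Your reduction --- first to the inequality $s(M) \le C(\dim X)\,f(\Gamma)$, and then, via the isotropy representation at a point $p \in \Fix(\widehat H)$, to a uniform bound on the number of conjugacy classes of isotropy subgroups of a finite subgroup of $O(n)$ --- is the paper's (Corollary~\ref{corl:isotropy-sg-bound}), and Theorem~\ref{thm:strata} then follows from Theorems~\ref{thm:finite-subgroups} and~\ref{thm:finite-subgroups-high-rank} exactly as you say. Your account of the linearization step is in fact more careful than the paper's one-line version: the meagre union producing a point $x$ near $p$ with $\Gamma_x=H$, and the remark that $\widehat H$-conjugacy gives $\Gamma$-conjugacy, are both correct and worth spelling out.

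The point you flag as the main obstacle is precisely where the proposal is incomplete, and also where your intended route differs from the paper's. The paper proves the finite-group lemma via a ``splitting set'': after Jordan's theorem provides $A\lhd G$ abelian of bounded index, it works in each $\langle A,g\rangle$-irreducible block, sets $K=[A,g^{-1}]$ and $C=C_A(g)$, and uses Schur's lemma together with $K\le\SL_n(\C)$ to obtain $[A:KC]\le n$; eigenbases for a bounded number of coset representatives then give a set $\calB$ of size bounded in $n$ with $A\calB$ containing an eigenbasis for every $g\in G$, whence Corollary~\ref{corl:fixed-subspaces}. Your sketch (Jordan, the $2^n$ count of $A$-isotropy subgroups, slice representations, and ``a routine argument'') omits the hard case. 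The slice induction does reduce $G_x$ with $B:=G_x\cap A\ne 1$ to lower dimension, since $\dim\Fix(B)<n$ and $G_x\le N_G(B)$. But when $G_x\cap A=1$ the group $G_x$ is a complement to $A$ in $G_xA$, its conjugacy class is not controlled by the $A$-isotropy count at all, and one has to bound the number of $A$-conjugacy classes of complements, i.e.\ bound $|H^1(G_xA/A,\,A)|$ uniformly in $G$. This is true --- the exponent divides $[G:A]$ and the rank is at most $[G:A]\cdot n$ because a finite abelian subgroup of $O(n)$ is $n$-generated --- but it is a genuine step with a different flavour from anything you wrote, not a ``routine argument''. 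Until that is supplied, the central lemma, and with it the theorem, is not proved.
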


\medskip

Lastly, we remark on the related problem of bounding the \emph{size} of maximal finite subgroups, and the number of torsion elements in $\Gamma$. A bound on the former, combined with a bound on the number of maximal finite subgroups, would yield a bound (perhaps not optimal) on the latter. We were unable to achieve such bounds with the tools used in this work. In fact, in $\SL_2(\R)$, the Gauss-Bonnet formula shows that there are arbitrarily large finite subgroups in lattices of bounded covolume. However, if $\Gamma < G$ is an arithmetic lattice and $H < \Gamma$ is a finite subgroup, then by \cite{agol-belol-finiteness},
\begin{equation*}
  \abs{F} \leq c_1 (\log \vol(G / \Gamma))^{c_2},
\end{equation*}
where $c_1,c_2$ are constants that depend on $G$ alone\footnote{This follows by combining the inequalities of 4.3, 4.5, and 4.6 of \cite{agol-belol-finiteness}. These results are proved for standard lattices in $\SO(n,1)$. In a private communication, Mikhail Belolipetsky reassured me that this is true in full generality, i.e. for arithmetic lattices in a semisimple algebraic group.}.

Unfortunately, this does not prove that the number of torsion elements in a lattice in a higher rank simple Lie group grows sublinearly with volume, a statement that which we conjecture to be true.

\subsection*{Acknowledgments}
Preliminary results of this work were achieved during the author's Ph.D. work at the Hebrew University of Jerusalem. I would like thank my advisor, Tsachik Gelander, for his continuous support, and hies encouragement to extend and generalize the scope of these results. I would also like to thank Misha Belolipetsky, Martin Bridson, Alex Furman, and Alex Lubotzky for stimulating conversations.

\section{Generalities}\label{sec:general}

We review some facts regarding isometries of Riemannian globally symmetric spaces. Let $X$ be a Riemannian globally symmetric space of non-positive sectional curvature. We will always assume the metric is rescaled so that $-1 \leq K \leq 0$.

\subsection{}
The isometry group $\Isom(X)$ is a Lie group without center, with finitely many connected components. Its identity component can be realized as the connected component of the real points of a linear algebraic group.

For $g \in \Isom(X)$, we define the displacement function
\begin{equation*}
  d_g : X \to \R^{\geq 0}, \; \; d_g(x) = d(gx,x).
\end{equation*}
The minimal displacement set is defined
\begin{equation*}
  \Min(g) = \{ x \in X \; : \; d_g(x) = \inf d_g \}.
\end{equation*}
An element $g \in \Isom(X)$ is \emph{semisimple} if $\Min(g)$ is non-empty. If a semisimple element fixes a point in $X$ it is \emph{elliptic}. Otherwise, it is \emph{hyperbolic}. In either case, $\Min(g)$ is a connected complete totally geodesic submanifold, and is thus a globally symmetric space \cite{helgason-book}*{IV, \S 7}.
If $\Min(g)$ is empty, $g$ is \emph{parabolic}.
If $g$ is a hyperbolic isometry and $d_g$ is constant, then $g$ is called a Clifford isometry.
We note that $g \in \Isom(X)^\circ$ is a semisimple isometry if and only if it is a semisimple element in the linear algebraic sense. If $g \neq 1$ is a unipotent element in the linear algebraic sense then it is parabolic (for the converse to be true, it is necessary that $\inf d_g =0$).

If $A$ is a set of commuting semisimple isometries then $\bigcap_{\alpha \in A} \Min(\alpha)$ is non-empty.

We remark that if $\Gamma$ is a group acting by isometries on $X$, we will call elements of $\Gamma$ hyperbolic, elliptic or parabolic, according to the classification of the isometry by which they act.

\subsection{}
The displacement function $d_g$ is convex, in the sense that $t \mapsto d_g(c (t))$ is a convex function for every geodesic $c$. Consequentially, the sub-level sets of $d_g$ are convex sets.

If $C$ is a convex subset of $X$ and $x \in X$, there is a unique point $\pi_C(x) \in C$ --- the projection of $x$ to $C$ --- which is closest to $x$. If $C$ is invariant under an isometry $g$, then $d_g(\pi_C(x)) \leq d_g(x)$. This follows from the fact that the projection does not increase distances. Consequentially, if $x \not \in C$ and $c : [0,\infty) \to X$ is a geodesic ray with $c(0) = \pi_C(x)$ and $c(t_0) = x$, then $d_g(c(t))$ is non-decreasing.

\subsection{}
Let $\Gamma$ be a countable group acting properly by isometries on $X$. By \cite{wolf-bounded-isometries}, a Clifford isometry acts by translation on the Euclidean factor of $X$, and trivially on the complementary factor. From this, it follows that the set $T$ of Clifford isometries in $\Gamma$ form a normal finitely generated free abelian subgroup. If $k = \rk_\R T$ then there is an isometric splitting $X \isom X_1 \times X_2$, with $X_2 \isom \R^k$, such that every element of $T$ acts trivially on $X_1$ and as a translation on $X_2$, and that the action of $T$ on $X_2$ is cocompact. We shall call this decomposition the \emph{Clifford splitting}\footnote{This splitting can be regarded as a special case of Gromov's $\text{ess}^a\text{-vol}$ decomposition, with $a=\infty$, cf. \cite{bgs-book}*{\S 12}.} of $X$ with respect to $\Gamma$. Note that this need not coincide with the de Rham decomposition, because $X_1$ may contain a Euclidean factor.

Every element $\gamma \in \Gamma$ preserves the Clifford splitting, and can be written as $(\gamma_1,\gamma_2)$ with $\gamma_i \in \Isom(X_i)$. We denote by $\Gamma_i$ the projection of $\Gamma$ to $\Isom(X_i)$, $i=1,2$.

\begin{prop}\label{prop:proper-on-non-euclid}
  $\Gamma_1$ acts properly on $X_1$.
\end{prop}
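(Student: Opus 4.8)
The plan is to show that if $\Gamma_1$ failed to act properly on $X_1$, we could produce an element of $\Gamma$ that is a Clifford isometry not lying in $T$, contradicting the maximality built into the Clifford splitting. Concretely, suppose $\Gamma_1$ does not act properly: then there is a compact set $B_1 \subset X_1$ and infinitely many distinct $\gamma_1^{(n)} \in \Gamma_1$ with $\gamma_1^{(n)} B_1 \cap B_1 \neq \emptyset$. Lift each $\gamma_1^{(n)}$ to some $\gamma^{(n)} = (\gamma_1^{(n)}, \gamma_2^{(n)}) \in \Gamma$. The kernel of $\Gamma \to \Gamma_1$ is exactly $T$ (an element acting trivially on $X_1$ is, by the defining property of the Clifford splitting, at worst a translation on $X_2$, hence a Clifford isometry in $\Gamma$, hence in $T$); since $T$ acts cocompactly on $X_2 \isom \R^k$, we may translate each $\gamma^{(n)}$ by an element of $T$ so that $\gamma_2^{(n)}$ moves a fixed basepoint of $X_2$ within a bounded region. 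Now $\gamma^{(n)}$ moves the compact set $B_1 \times \{\text{pt}\}$ within a bounded region of $X = X_1 \times X_2$, and since $\Gamma$ acts properly on $X$, only finitely many of the $\gamma^{(n)}$ can be distinct — contradicting that the $\gamma_1^{(n)}$ are pairwise distinct.

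The key steps, in order: (1) identify $\ker(\Gamma \to \Gamma_1) = T$, using the definition of the Clifford splitting (every element of $T$ acts trivially on $X_1$, and conversely an element acting trivially on $X_1$ restricts on $X_2 \isom \R^k$ to an isometry whose displacement is realized everywhere on $X$, hence is Clifford, hence in $T$); (2) use cocompactness of the $T$-action on $X_2$ to adjust lifts so their $X_2$-components stay bounded on a basepoint; (3) combine a putative failure of properness for $\Gamma_1$ with the bounded adjustment to contradict properness of the $\Gamma$-action on $X = X_1 \times X_2$. One should phrase properness in the form: for every compact $B \subset X$, the set $\{\gamma \in \Gamma : \gamma B \cap B \neq \emptyset\}$ is finite; the product $B_1 \times B_2$ with $B_2$ a large ball in $X_2$ containing the bounded orbit region is the compact set to test against.

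The main obstacle is step (1): one must be careful that the kernel of the projection is genuinely $T$ and not something larger — i.e., that an isometry of $X_1 \times \R^k$ trivial on the $X_1$-factor and bounded on $X_2$ is forced to be a translation of $\R^k$ (not, say, a rotation), so that it is Clifford and lies in $T$ by the very construction of the splitting. This uses Wolf's theorem on bounded isometries (already invoked in the excerpt) together with the fact that any isometry of $X$ commuting with the cocompact $T$-action on the $\R^k$-factor and fixing $X_1$ pointwise must preserve the product structure and act affinely on $\R^k$; since it is in $\Gamma$ and has bounded displacement, it is Clifford, hence in $T$. Once the kernel is pinned down, the rest is a routine pigeonhole/compactness argument.
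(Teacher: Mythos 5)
Your core argument (steps (2) and (3)) is exactly the paper's proof: lift a sequence witnessing non-properness of $\Gamma_1$ to elements of $\Gamma$, use cocompactness of the $T$-action on $X_2$ to correct the $X_2$-components so they stay bounded at a basepoint, and contradict properness of $\Gamma$ on $X = X_1\times X_2$. That part is correct and is what the paper does.

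However, the claim you single out as ``the main obstacle'' --- that $\ker(\Gamma\to\Gamma_1)$ is exactly $T$ --- is false in general, and the justification you offer does not hold up. An element of $\Gamma$ acting trivially on $X_1$ acts on $X_2\isom\R^k$ as an arbitrary Euclidean isometry, for instance a rotation or a screw motion; such an element has \emph{unbounded} displacement function (there is no reason it should have bounded displacement just because it lies in the kernel), so it is not a Clifford isometry and need not lie in $T$. Normality of $T$ only forces the linear part of such an element to preserve the translation lattice of $T$, which nontrivial rotations can certainly do: the kernel is a crystallographic group whose translation subgroup is $T$, but whose point group may be nontrivial. The paper is careful on exactly this point, asserting only that the kernel projects to a group $\Gamma_2'$ \emph{containing} $T$ that acts properly and cocompactly on $X_2$. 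Fortunately, the identification of the kernel is nowhere needed in your argument: the adjustment step uses only that $T$ acts trivially on $X_1$ and cocompactly on $X_2$ (both built into the definition of the Clifford splitting), and the final pigeonhole uses only the inclusion $T\subseteq\ker p_1$, which guarantees that multiplying by elements of $T$ does not change the pairwise distinct projections $\gamma_1^{(n)}$, so the adjusted elements of $\Gamma$ remain pairwise distinct. Delete step (1) and the proof stands as written.
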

\begin{proof}
Suppose in contradiction that $\gamma^{(n)}_1 \in \Gamma_1$ is a sequence such that $\gamma^{(n)}_1 \to \infty$ but $\gamma^{(n)}_1 x_1$ converges for some $x_1 \in X_1$.

There are elements $\gamma^{(n)}_2 \in \Gamma_2$ such that $\gamma^{(n)} = (\gamma^{(n)}_1,\gamma^{(n)}_2) \in \Gamma$. Fix any $x_2 \in X_2$. Since $T$ acts trivially on $X_1$ and cocompactly on $X_2$, by multiplying the $\gamma^{(n)}$ by elements of $T$, we may assume $d(\gamma^{(n)}_2 x_2, x_2) < K$, where $K$ is some constant that depends on $T$.
Passing to a subsequence we have that $\gamma^{(n)}_2 x_2$ converges, and thus $\gamma^{(n)} (x_1,x_2)$ converges. This contradicts to properness of the action of $\Gamma$.
\end{proof}

The kernel of the projection $p_1$ projects to a subgroup $\Gamma_2'$ of $\Gamma_2$ that acts properly on $X_2$. Note that $T$ injects into $\Gamma_2'$, and thus $\Gamma_2'$ acts cocompactly on $X_2$.

As we have remarked, the Euclidean factor of the Clifford decomposition may be smaller than the Euclidean factor of the de Rham decomposition. However, if $X / \Gamma$ has finite volume, then the decompositions coincide \cite{eberlein-symm-difeo}*{Thm I}. In this case, the statement of Proposition \ref{prop:proper-on-non-euclid} follows from \cite{eberlein-symm-difeo}*{Cor.~F}.

\subsection{}
For a group of isometries $\Gamma$, $x \in X$, and $\eps > 0$, we denote
\begin{equation*}
  \Gamma_\eps(x) = \langle \gamma \in \Gamma : d_\gamma(x) < \eps \rangle.
\end{equation*}
Recall the classical Margulis Lemma:

\begin{thm}\cite{thurston-book}*{Ch. 4} \label{lem:margulis}
Let $X$ be a globally symmetric space of non-positive curvature.
There are constants $\eps > 0$ and $m \in \N$ (depending on $X$) such that for every discrete group $\Gamma < \Isom(X)$ and every $x \in X$, $\Gamma_\eps(x)$ contains a normal nilpotent subgroup $N$ of index at most $m$. Moreover, $N$ is the intersection of $\Gamma_\eps(x)$ with a connected nilpotent group in $\Isom(X)$.
\end{thm}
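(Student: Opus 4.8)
The plan is to obtain this as a combination of two classical ingredients: a commutator--contraction estimate near the identity of the linear Lie group $G := \Isom(X)$, which produces the Zassenhaus--Kazhdan--Margulis neighborhood of $1$; and a transfer mechanism converting ``small displacement at $x$'' into ``close to the identity modulo the compact stabilizer of $x$''. Throughout I would use that $G^\circ$ is the group of real points of a linear algebraic group, fix a closed embedding $G^\circ \hookrightarrow \GL_n(\R)$, and measure nearness to $1$ by the operator norm $\Norm{\cdot}$; the finitely many components of $G$ enter only a bookkeeping induction.

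First, the group-theoretic core. A Baker--Campbell--Hausdorff expansion gives a neighborhood $U_0$ of $1$ and a constant $C = C(n)$ with $\Norm{[g,h]-1} \leq C\,\Norm{g-1}\,\Norm{h-1}$ for $g,h \in U_0$. Fixing $U := \{g : \Norm{g-1} < \rho\}$ with $\rho$ small and $\rho < 1/(2C)$, I would argue: if $\Gamma < G$ is discrete and $\Gamma' := \langle \Gamma \cap U\rangle \neq \{1\}$, then an element $g_0 \in \Gamma'\setminus\{1\}$ of minimal norm (necessarily lying in $U$, hence of norm $< \rho$) satisfies $\Norm{[s,g_0]-1} < \Norm{g_0 - 1}$ for every generator $s \in \Gamma\cap U$, forcing $[s,g_0] = 1$; thus $g_0$ is central in $\Gamma'$ and $\Gamma' \subseteq Z_G(g_0)$. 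Since $G$ is center-free, $Z_G(g_0)$ is a proper closed subgroup, so an induction on the pair $(\dim, \text{number of connected components})$ ordered lexicographically --- the commutator estimate being ambient in $\GL_n$, so that the same $U$ works at every stage --- shows that $\Gamma'$ lies in a connected nilpotent Lie subgroup $\mathcal N \subseteq G$ and is in particular nilpotent. This produces a single neighborhood $U = U(X)$ with the stated property, uniformly over all discrete $\Gamma$.

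Next I would transfer this to displacements. For $x \in X$ one has the identity $\{g \in G : d_g(x) < \eps\} = P_\eps \cdot K_x$, where $K_x$ is the compact stabilizer of $x$ and $P_\eps$ is the set of transvections moving $x$ a distance less than $\eps$ --- a relatively compact neighborhood of $1$ that shrinks to $\{1\}$ as $\eps \to 0$. As all stabilizers $K_x$ are conjugate in $G$, it suffices to fix one $x$. I would cover the compact group $K_x$ by $m = m(X)$ left translates $k_1 W, \dots, k_m W$ of a small symmetric neighborhood $W$ of $1$, chosen, together with $\eps$, so small that $P_\eps\,(k_i W W^{-1} k_i^{-1})\,P_\eps^{-1} \subseteq U$ for every $i$. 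Then whenever two elements $\gamma, \gamma'$ of $\Gamma$ with displacement less than $\eps$ at $x$ fall in the same translate $P_\eps k_i W$, one has $\gamma'\gamma^{-1} \in U$; together with the fact that conjugation by the relatively compact set $P_\eps K_x$ distorts small neighborhoods of $1$ only boundedly, this shows that $\langle\Gamma_\eps(x)\cap U\rangle$ (and a suitable slightly larger subgroup) is nilpotent and lies in a connected nilpotent subgroup $\mathcal N$, while the $m$ translates account for ``everything else''.

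The step I expect to be the main obstacle is upgrading this to an \emph{honest normal} nilpotent subgroup $N = \Gamma_\eps(x)\cap\mathcal N$ whose index is bounded by a constant depending only on $X$. The difficulty is localized entirely in the compact stabilizer: $\Gamma_\eps(x)$ can itself be very large --- it may contain large finite subgroups of $K_x$, as the Gauss--Bonnet example in the introduction shows --- so bounding the index of the nilpotent part is not a mere count of the $m$ translates of $W$; one must also control the finite ``rotational'' quotient coming from $K_x$, which is where a Jordan-type theorem (a finite subgroup of $\GL_n$ has an abelian subgroup of index bounded in terms of $n$) enters to cap the index. Beyond this, one has to choose $\mathcal N$ canonically enough that $\Gamma_\eps(x)$ normalizes $N$, and to absorb into the constants the two standing technical nuisances: the possible Euclidean factor of $X$ (where parabolics with $\inf d_g = 0$ and Clifford isometries live) and the finitely many components of $\Isom(X)$.
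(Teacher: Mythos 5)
The paper does not actually prove this statement: it is imported verbatim from Thurston (Ch.~4) as a classical black box, and the only original content in the surrounding discussion is the remark that, after rescaling the metric and invoking Cartan's classification, the constants $\eps$ and $m$ can be chosen to depend only on $\dim X$. So the real question is whether your sketch would stand on its own, and it would not. Your first two paragraphs give the correct standard skeleton (a Zassenhaus--Kazhdan--Margulis neighborhood $U$ via the commutator contraction, then the transfer $\{g : d_g(x)<\eps\}\subseteq P_\eps K_x$ and a finite cover of the compact stabilizer), but your third paragraph is a description of the part of the theorem you have not proved rather than a proof of it.

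Concretely, the pigeonhole in your second paragraph only shows that the \emph{generating set} $S=\{\gamma\in\Gamma : d_\gamma(x)<\eps\}$ meets at most $m$ left cosets of $\langle\Gamma\cap U\rangle$. That does not bound the index of any nilpotent subgroup in $\Gamma_\eps(x)=\langle S\rangle$: products of elements of $S$ leave the compact set $P_\eps K_x$, and a subgroup whose cosets meeting $S$ number at most $m$ can still have arbitrarily large (even infinite) index. The extreme case you yourself point out --- $\Gamma_\eps(x)$ a large finite subgroup of $K_x$, where $\Gamma\cap U$ may be trivial --- shows that the normal nilpotent subgroup of index at most $m$ cannot in general be $\langle\Gamma\cap U\rangle$ at all; there the entire content of the theorem is Jordan's theorem. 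Assembling (i) the Zassenhaus neighborhood, (ii) a Jordan-type bound for the rotational parts, and (iii) a normality-and-index argument uniform over all discrete $\Gamma$ is precisely the nontrivial portion of the Kazhdan--Margulis/Thurston proof, and your proposal explicitly defers it as ``the main obstacle'' without resolving it. A secondary point: the ``moreover'' clause, that $N$ is the intersection of $\Gamma_\eps(x)$ with a \emph{connected} nilpotent subgroup --- which the paper later uses to extract central parabolic elements and to see that the semisimple elements of $N$ commute --- requires the logarithm/Lie-algebra step of the Zassenhaus argument, which your centralizer induction does not quite deliver. Since the result is classical, citing it as the paper does is the right move; as a self-contained proof the proposal has a genuine gap.
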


We will need a slightly stronger version of this theorem. Namely, we want the constants $\eps$ and $m$ to depend only on the \emph{dimension} of $X$ (with our standing assumption on curvature bounds). To this end, we first observe that if the metric is rescaled by some factor, then the constant $\eps$ can be rescaled by the same factor, and $m$ is can be left unaltered. Therefore, if the constants of the Margulis Lemma are given for a space with minimal curvature exactly -1, then they will be adequate for any rescaling of this space with curvature $-1 \leq K \leq 0$.

Next, recall that a complete simply connected symmetric space of non-positive curvature admits a decomposition into a product of a Euclidean factor and irreducible symmetric spaces of non-compact type. By the classification of \'E. Cartan, there are --- up to isometry and rescaling of the metric --- only finitely many symmetric spaces of non-compact type of a given dimension \cite{helgason-book}*{Ch. X}.

Therefore, there are finitely many spaces to consider of each dimension, if we rescale the metric such that minimal curvature is -1. It follows that for $d \in \N$, we may choose $\eps$ and $m$ such that the statement of the theorem holds with these constants for every space of dimension $\leq d$.

\medskip
We state two important consequences of the fact that the nilpotent subgroup $N$ stipulated in Theorem \ref{lem:margulis} is contained in a connected nilpotent linear group.
By a theorem of Lie, a connected linear solvable Lie group can be conjugated to a subgroup of upper triangular matrices. Hence, the commutator subgroup of such group consists of unipotent elements. We claim that if $N$ contains a parabolic element then it contains a central parabolic element. If $N$ is abelian there is nothing to prove. Otherwise, the elements of the $k-1$-iterated commutator, where $k$ is the nilpotency rank of $N$, are central, and are parabolic. Similar considerations are made in \cite{gelander-vol-vs-rank}.

Moreover, the semisimple elements in $N$ consist of an abelian subgroup \cite{borel-algebraic-groups}*{\S 10}. In particular, if $H$ is a finite subgroup of $\Isom(X)$, then it has a normal abelian subgroup of index no more than $m$.

\subsection{}
If $\gamma$ is an element in a discrete group for which the Margulis Lemma holds, then $\gamma^i$ is in the corresponding normal nilpotent subgroup, for some $i \leq m$. With the purpose of relating properties of $\gamma^i$ to those of $\gamma$, we make the following definition.

\begin{define}
  A semisimple isometry $g \in \Isom(X)$ is \emph{stable} if $C(g^i)=C(g)$ for every $i = 1,\ldots, m$, where $m = m(X)$ is the constant of the Margulis Lemma.
  Here, $C(g)$ is the centralizer of $g$ in $\Isom(X)$.
\end{define}

We remark that dependence on the constant of the Margulis Lemma introduces ambiguity to this definition. However, since it is an auxiliary notion, we will tolerate this. Moreover, we will later fix the space $X$, and consider subspaces of it. The constant of the Margulis Lemma will be set once and for all, and stability of isometries of a subspace $Y$ of $X$ will be defined using this constant.

Since $C(g)$ acts transitively on $\Min(g)$, stability of $g$ also implies that $\Min(g^i) = \Min(g)$ for $i=1,\ldots,m$.
A slightly weaker notion of stability was introduced by Gromov \cite{bgs-book}*{\S 12}; in his definition, the latter equality is the defining property.

It is easy to see that translations on a Euclidean space are stable (in fact, if $g \in \Isom(\R^n)$ is a translation then $C(g^i) = C(g)$ for every $i \geq 1$). This implies that, in the general setting, Clifford isometries are stable. Indeed, an isometry commutes with a Clifford isometry if and only if their actions on the Euclidean factor commute.

For other semisimple isometries, we have the following ``stabilization'' lemma, proved in \cite{gelander-vol-vs-rank}*{Lemma 1.4} (cf.\ \cite{bgs-book}*{\S 12.5} and \cite{samet-betti}*{Prop. 2.5}).
\begin{lemma}\label{lem:stablizing}
  There is a constant $M = M(X)$ such that for every $g \in Isom(X)$ there exists $i \leq M$ such that $g^i$ is stable.
\end{lemma}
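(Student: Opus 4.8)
The plan is to reduce the claim to a bound on the length of chains of centralizers of powers of $g$, using that centralizers live inside a fixed linear algebraic group. First I would reduce to the case where $g$ is semisimple: if $g$ is parabolic then all its powers are parabolic, their centralizers can still be compared, and in any case the argument below is cleanest when we work with the Zariski closure of $\langle g \rangle$. So fix $g \in \Isom(X)$, let $G = \Isom(X)^\circ$, which is (the identity component of the real points of) a linear algebraic group, and consider the descending behavior of the centralizers $C(g^{i!})$ as $i$ grows. The key observation is that $C(g^{i}) \subseteq C(g^{j})$ whenever $j \mid i$, so along the sequence $g^{1!}, g^{2!}, g^{3!}, \dots$ the centralizers form a descending chain of algebraic subgroups of $G$:
\begin{equation*}
  C(g) \supseteq C(g^{2!}) \supseteq C(g^{3!}) \supseteq \cdots .
\end{equation*}
Since $G$ is a linear algebraic group, it satisfies the descending chain condition on Zariski-closed subgroups, and moreover there is an absolute bound $N = N(G)$ on the length of any strictly descending chain of algebraic subgroups (bounded, e.g., in terms of $\dim G$ and the number of components that can appear, or simply by Noetherianity applied uniformly — here it suffices that \emph{some} finite bound exists, depending only on $X$). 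Hence the chain above stabilizes after at most $N$ steps: there is $k \leq N$ with $C(g^{k!}) = C(g^{(k+1)!}) = \cdots$, and in particular $C(g^{k!}) = C(g^{k! \cdot j})$ for every $j \geq 1$.

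Next I would upgrade this stabilization of the Zariski closures to the statement we actually need, namely $C(h^i) = C(h)$ for all $i \leq m$, where $h = g^{k!}$ and $m = m(X)$ is the Margulis constant. For this, replace $k!$ by the larger exponent $\ell = \operatorname{lcm}(k!, 1, 2, \dots, m) \cdot \text{(something)}$; more precisely, choosing $\ell$ to be a multiple of $k!$ that is also divisible by $m!$ ensures that for any $i \leq m$ we have $k! \mid \ell$ and $\ell \mid \ell i$, so $C(g^{\ell}) = C(g^{k!}) = C(g^{\ell i})$ by the stabilization. Thus $h = g^{\ell}$ is stable, and $\ell$ can be taken to be $k! \cdot m!$ (or any fixed multiple), which is bounded by a constant $M = M(X) := N(X)! \cdot m(X)!$ depending only on $X$. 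One subtlety to check: the definition of stability is phrased using the centralizer in $\Isom(X)$, not in its identity component; but $\Isom(X)$ has finitely many components, so passing between the two only changes the chain-length bound by a bounded factor, which is harmless. (Alternatively, one notes that $C_{\Isom(X)}(g^i)$ is still an algebraic subgroup of the algebraic group $\Isom(X)$, and the descending chain argument applies verbatim.)

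The main obstacle I anticipate is making the bound on the chain length genuinely depend only on $X$ (equivalently, as in the paper's convention, only on $\dim X$), rather than on $g$. Pure Noetherianity gives termination but not an a priori length bound; one needs either a dimension-plus-component-count estimate for descending chains of algebraic subgroups of a fixed $G$, or a direct argument via the structure of centralizers (e.g., the centralizer of a semisimple element is reductive of bounded complexity, and the relevant flag of Levi subgroups has bounded length). Since the excerpt cites \cite{gelander-vol-vs-rank}*{Lemma 1.4} for exactly this statement, I would follow that reference for the quantitative step; the conceptual skeleton — descending chain of centralizers of powers inside a linear algebraic group, forced to stabilize, then pass to a divisible enough exponent to kill the Margulis constant — is what I have sketched above.
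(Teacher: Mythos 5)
The paper does not actually prove this lemma; it is quoted from \cite{gelander-vol-vs-rank}*{Lemma 1.4}, and the argument there is indeed a chain-of-centralizers argument of the kind you sketch. However, your sketch has a genuine error at its core: the containment goes the wrong way. If $j \mid i$ then $g^i$ is a \emph{power} of $g^j$, so anything commuting with $g^j$ commutes with $g^i$; hence $C(g^j) \subseteq C(g^i)$, not the reverse (e.g.\ for $g = \diag(1,-1) \in \GL_2$, $C(g)$ is the diagonal torus while $C(g^2) = \GL_2$). Thus $C(g) \subseteq C(g^{2!}) \subseteq \cdots$ is an \emph{ascending} chain, and Noetherianity of the Zariski topology (which gives the descending chain condition on closed subgroups) says nothing about it. Worse, the auxiliary claim you invoke --- a uniform bound $N(G)$ on the length of strictly monotone chains of algebraic subgroups of a fixed $G$ --- is false in either direction for general algebraic subgroups: the groups $\mu_{2^k}$ of roots of unity inside a one-dimensional torus form arbitrarily long strict chains. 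So the quantitative step you defer to the reference is not a technicality; it is the actual content of the lemma.

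The repair is as follows (and is essentially what the reference does). Reduce to $g$ semisimple, as you note (stability is only defined for semisimple isometries), and consider the ascending chain $C(g^{(m!)^0}) \subseteq C(g^{(m!)^1}) \subseteq C(g^{(m!)^2}) \subseteq \cdots$ inside $\Isom(X)$ realized as a linear algebraic group. Each term is the centralizer of a semisimple element of a fixed reductive group; such centralizers have dimension at most $\dim \Isom(X)$ \emph{and} component group of uniformly bounded order, and a strict inclusion between two of them must increase one of these two quantities. This --- not Noetherianity --- gives a bound $B = B(X)$ on the number of strict inclusions, hence some $s \leq B$ with $C(g^{(m!)^s}) = C(g^{(m!)^{s+1}})$. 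Setting $h = g^{(m!)^s}$, for every $j \leq m$ one has $j \mid m!$ and therefore $C(h) \subseteq C(h^j) \subseteq C(h^{m!}) = C(h)$, so $h$ is stable and $M = (m!)^{B}$ works. Note finally that even after fixing the direction, your phrase ``the chain stabilizes after at most $N$ steps'' claims too much and can fail: if $g$ has order a large prime $p$, the chain is constant up to index $p$ and then jumps to all of the group. Only the number of strict jumps is bounded, which is why one must argue with two consecutive terms whose exponents differ by a factor of $m!$, rather than with an eventual constant tail.
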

As we have observed, for a given $d \in \N$, it is possible to choose the constants of the Margulis Lemma so that they will be adequate for all symmetric spaces of dimension $\leq d$. The same reasoning shows that in the previous lemma, it is possible to choose a constant $M$ that will suit all spaces of dimension $\leq d$.

\begin{lemma}\label{lem:stable-clifford-commute}
  Let $\alpha \in \Gamma$ be a stable isometry. If $\min d_\alpha < \eps$ (the constant of the Margulis Lemma) then $\alpha$ commutes with every Clifford isometry in $\Gamma$.
\end{lemma}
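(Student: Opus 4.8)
The plan is to use the Clifford splitting to convert the statement into a claim about the ``rotational part'' of $\alpha$ on the Euclidean factor, and then to kill that rotational part with the Margulis Lemma and stability.

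First I would set up notation. Write $X \isom X_1 \times X_2$, $X_2 \isom \R^k$, for the Clifford splitting with respect to $\Gamma$, so that the group $T$ of Clifford isometries in $\Gamma$ is identified with a cocompact lattice $L \subset \R^k$ acting by translations on $X_2$ and trivially on $X_1$. Since $\alpha \in \Gamma$ preserves the splitting, $\alpha = (\alpha_1,\alpha_2)$ with $\alpha_2 \in \Isom(\R^k)$; let $A \in O(k)$ be the linear part of $\alpha_2$. As $T$ is normal in $\Gamma$, conjugation by $\alpha$ carries the translation $t_v$ ($v \in L$) to $t_{Av}$; hence $A$ preserves $L$, lies in the finite group $O(k) \cap \mathrm{Aut}(L)$, and $\alpha$ commutes with $t_v$ if and only if $Av = v$. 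Since $L$ spans $X_2$, and an isometry commutes with a Clifford isometry exactly when their actions on the Euclidean factor commute, the assertion of the lemma is precisely that $A = \mathrm{id}$. So the goal is to show $A$ acts trivially.

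Next, since $\alpha$ is stable it is semisimple, so I can fix $y \in \Min(\alpha)$; then $d_\alpha(y) = \min d_\alpha < \eps$, so $\alpha \in \Gamma_\eps(y)$, and by the Margulis Lemma there is a connected nilpotent group $\widetilde N \leq \Isom(X)$ with $N := \Gamma_\eps(y) \cap \widetilde N$ normal of index $\leq m$ in $\Gamma_\eps(y)$; thus $\alpha^i \in N$ for some $i \leq m$. The mechanism I would exploit is that a commutator of $\alpha$ (or of $\alpha^i$) with a Clifford isometry $t_v$ is again a Clifford isometry: $[\alpha,t_v] = (\alpha t_v \alpha^{-1})\,t_v^{-1} = t_{(A-\mathrm{id})v}$, a translation of $X_2$, and similarly $[\alpha^i, t_v] = t_{(A^i-\mathrm{id})v}$. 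Two consequences follow. (i) If $t_v$ itself has displacement $< \eps$, then $\alpha$ and $t_v$ both lie in $\Gamma_\eps(y)$, so suitable powers $\alpha^{i_1}, t_v^{i_2}$ ($i_1,i_2\le m$) lie in $N$; as the semisimple elements of the nilpotent group $N$ form an abelian subgroup, $\alpha^{i_1}$ and $t_v^{i_2}$ commute, and two applications of stability — of $\alpha$, and of $t_v$, which is Clifford hence stable — give $[\alpha,t_v]=1$; in particular $A$ fixes every short vector of $L$. (ii) Iterating the commutator with $\alpha^i$ produces the translations $t_{(A^i-\mathrm{id})^n v}$ for all $n$; one wants these to lie in $\widetilde N$, so that nilpotency of $\widetilde N$ forces $(A^i-\mathrm{id})^n v = 0$ for large $n$. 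Because $A^i \in O(k)$ is orthogonal, hence diagonalizable, this gives $(A^i-\mathrm{id})v = 0$; letting $v$ range over $L$ yields $A^i = \mathrm{id}$. Then stability gives $C(\alpha) = C(\alpha^i)$, while $A^i = \mathrm{id}$ means $\alpha^i$ acts on $X_2$ by a translation and hence centralizes $T$; therefore so does $\alpha$.

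The delicate point — which I expect to be the main obstacle — is step (ii): getting the iterated commutators $t_{(A^i-\mathrm{id})^nv}$ into the connected nilpotent group $\widetilde N$, not merely into $\Gamma$. When $t_v$ is short this is automatic (it then normalizes $\widetilde N$), but in that case (i) already makes the commutator trivial, so the real work is with Clifford isometries of large displacement. Here I would lean on the normality of $T$ in $\Gamma$ together with an appropriate choice of $\widetilde N$ (for instance the identity component of a Zariski closure), and on the cocompactness of the $T$-action on $X_2$, to manoeuvre these commutators into $\widetilde N$; making this precise is the crux of the argument.
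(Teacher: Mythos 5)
Your reduction to showing that the linear part $A$ of $\alpha_2$ is trivial is correct, and your step (i) is exactly the paper's core mechanism (Margulis at a point of $\Min(\alpha)$, commuting semisimple powers in the nilpotent subgroup, then stability of $\alpha$ and of the Clifford isometry). But step (ii) is a genuine gap, and you have correctly sensed it: for a Clifford isometry $t_v$ of displacement $\geq \eps$ there is no reason for $t_v$, or for the commutators $[\alpha^i,t_v]$, to lie in $\Gamma_\eps(y)$ or in the connected nilpotent group $\widetilde N$ produced by the Margulis Lemma, so nilpotency gives you nothing; and since the lattice $L$ may contain no nonzero vector of length $<\eps$ at all, step (i) alone can leave $A$ completely unconstrained. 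The "manoeuvring into $\widetilde N$" you defer to is precisely the part that does not work.

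The missing idea in the paper is a rescaling trick that makes step (ii) unnecessary. Let $R_\lambda:X\to X$ act trivially on $X_1$ and by the homothety $x\mapsto\lambda x$ on $X_2$. Although $R_\lambda$ is not an isometry, it conjugates $\Gamma$ to another discrete group of isometries of $X$ (affine Euclidean isometries of $X_2$ go to affine Euclidean isometries), it multiplies the displacement of every Clifford isometry by $\lambda$, and it does not increase $\min d_\alpha$. So for $\lambda$ small enough the given Clifford isometry $\beta$ becomes "short" in the conjugated group, and your argument (i) applies verbatim there: $\alpha^i$ and $\beta^i$ lie in the nilpotent subgroup $N$ of $\Gamma_\eps(x)$, are semisimple, hence commute; stability of $\alpha$ gives $\beta^i\in C(\alpha^i)=C(\alpha)$, and stability of the Clifford isometry $\beta$ gives $\alpha\in C(\beta^i)=C(\beta)$. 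Since commutation relations are preserved under the conjugation, this proves the lemma with no case distinction and no need to control iterated commutators. If you want to salvage your own route, the fix is the same observation: rescale so that a generating set of $L$ becomes short, then quote (i).
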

\begin{proof}
  Let $X = X_1 \times X_2$ be the Clifford splitting with respect to $\Gamma$. Let $R_\lambda : X \to X$ be the map that acts trivially on $X_1$, and by the homothety $x \mapsto \lambda x$ on $X_2$. Note that $R_\lambda$ conjugates $\Gamma$ to a group of isometries of $X$. Moreover, if $\beta$ is a Clifford translation then the displacement of $R_\lambda \beta R_\lambda^{-1}$ is $\lambda$ times the displacement of $\beta$.

  Let $\beta \in \Gamma$ be a Clifford isometry. By conjugating by $R_\lambda$ for $\lambda > 0$ sufficiently small, we may assume that $d_\beta(x) < \eps$. Let $N$ be the normal nilpotent subgroup of $\Gamma_\eps(x)$, with index $i \leq m$. Since $\alpha^i, \beta^i \in N$ are semisimple, they commute. Thus $\beta^i \in C(\alpha^i) = C(\alpha)$, by stability of $\alpha$, and therefore $\alpha \in C(\beta^i) = C(\beta)$ by stability of $\beta$.
\end{proof}

\subsection{}
Let $\Gamma$ be a group acting properly (but possibly not faithfully) on $X$. For $x \in X$, we denote $d_\Gamma(x) = \inf d_\gamma(x)$, where $\gamma$ ranges over all elements of $\Gamma$ that act non-trivially. The $\delta$-thick part of $X / \Gamma$ ($\delta > 0$) is defined as
\begin{equation*}
(X / \Gamma)_{\geq \delta} = \{ x \in X/\Gamma \; : \; d_\Gamma(\tilde x) \geq \delta \}
\end{equation*}
where $\tilde x$ is any lift of $x$ to $X$. We denote $\vol_{\geq \delta}(X / \Gamma) = \vol( (X / \Gamma)_{\geq \delta})$. If $X$ is a point, we set $\vol(X)=1$ and $\vol_{\geq \delta}(X / \Gamma)=1$ for every $\delta > 0$.

In the presence of a Euclidean factor, one cannot hope to relate any property of $\Gamma$ to the volume of $X / \Gamma$; indeed, by conjugating $\Gamma$ by a homothety of the Euclidean factor, the volume of the quotient can be made arbitrarily small. The following definition aims at overcoming this difficulty.

\begin{define}
  Let $X \isom X_1 \times X_2$, $X_2 \isom \R^k$, be the Clifford decomposition with respect to $\Gamma$.
  We define $\ncv(X / \Gamma) = \vol(X_1 / \Gamma_1)$ (for ``no Clifford volume''), and $\ncv_{\geq \delta}(X / \Gamma) = \vol_{\geq \delta}(X_1 / \Gamma_1)$.
\end{define}

The following proposition lets us estimate the volume of the thick part of a space by a discrete set.
\begin{prop}\label{prop:discrete-vol-est}
Let $Y$ be a totally geodesic submanifold of $X$, and let $\Gamma$ be a group acting properly on $Y$. There exists a constant $c = c(X, \delta)$ such that the following holds.
If $\calN$ is a $\delta$-discrete set in $(Y / \Gamma)_{\geq \delta}$ (i.e. every two points in $\calN$ are at distance $\geq \delta$) then
\begin{equation*}
  \abs{\calN} \leq c \cdot \vol_{\geq \frac{\delta}{2}}(Y / \Gamma).
\end{equation*}
Furthermore, if $\calN$ is a maximal $\delta$-discrete set then
\begin{equation*}
  \abs{\calN} \geq c^{-1} \cdot \vol_{\geq \delta}(Y / \Gamma).
\end{equation*}
\end{prop}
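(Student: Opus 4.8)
The plan is a standard packing-and-covering argument, converting the cardinality of $\calN$ into volume via comparison geometry. Write $n = \dim Y$, so $1 \leq n \leq \dim X$ (the case $n = 0$ is immediate from the stated conventions, since then $\abs{\calN} \leq 1$ and both sides are comparable to $1$, so assume $n \geq 1$). Since $Y$ is a complete totally geodesic submanifold of $X$ it is a Hadamard manifold with $-1 \leq K \leq 0$, so the volume of a metric $r$-ball in $Y$ is at least the Euclidean value $\omega_n r^n$ (Günther's inequality, using $K \leq 0$) and at most the volume of an $r$-ball in real hyperbolic $n$-space (Bishop--Gromov, using $K \geq -1$). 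As $n$ ranges over the finitely many admissible values, minimizing and maximizing these bounds produces constants $v_-(\delta), v_+(\delta) \in (0,\infty)$, depending only on $\delta$ and $\dim X$, such that every $(\delta/4)$-ball in $Y$ has volume $\geq v_-(\delta)$ and every $\delta$-ball in $Y$ has volume $\leq v_+(\delta)$.

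For the first inequality, fix for each $p \in \calN$ a lift $\tilde p \in Y$. Since $d_\Gamma(\tilde p) \geq \delta$, two applications of the triangle inequality show: (i) the projection $\pi \colon Y \to Y/\Gamma$ is injective on $B(\tilde p, \delta/4)$ --- an identification of two of its points by a non-trivially acting $\gamma$ would force $d_\gamma(\tilde p) < \delta/2 < \delta$ --- and since no non-trivially acting element fixes a point in the $\delta$-thick part, $\pi$ is also a local isometry there, so $\vol\bigl(\pi B(\tilde p, \delta/4)\bigr) = \vol B(\tilde p, \delta/4) \geq v_-(\delta)$; and (ii) every lift of a point of $B(\tilde p, \delta/4)$ is displaced at least $\delta - 2(\delta/4) = \delta/2$ by every non-trivially acting element, so $\pi B(\tilde p, \delta/4) \subseteq (Y/\Gamma)_{\geq \delta/2}$. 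Since $\calN$ is $\delta$-discrete the sets $\{\pi B(\tilde p,\delta/4)\}_{p \in \calN}$ are pairwise disjoint, and summing volumes gives $\abs{\calN}\cdot v_-(\delta) \leq \vol_{\geq \delta/2}(Y/\Gamma)$.

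For the second inequality, suppose $\calN$ is a maximal $\delta$-discrete subset of $(Y/\Gamma)_{\geq\delta}$. By maximality every point of $(Y/\Gamma)_{\geq\delta}$ lies within distance $\delta$ of some point of $\calN$, so the balls $B(p,\delta) \subseteq Y/\Gamma$, $p \in \calN$, cover $(Y/\Gamma)_{\geq\delta}$. Because $\pi$ is a local isometry off a measure-zero set it does not increase volume, so $\vol B(p,\delta) \leq \vol B(\tilde p,\delta) \leq v_+(\delta)$, whence $\vol_{\geq\delta}(Y/\Gamma) \leq \abs{\calN}\cdot v_+(\delta)$. Setting $c = \max\{1, v_-(\delta)^{-1}, v_+(\delta)\}$ yields both bounds. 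The argument is routine; the only points needing attention are the bookkeeping of thick-part parameters (a $(\delta/4)$-ball about a $\delta$-thick point lies in the $(\delta/2)$-thick part, which is why $\vol_{\geq \delta/2}$, and not $\vol_{\geq\delta}$, appears in the first estimate) and the observation that, although the $\Gamma$-action on $Y$ may be non-faithful, in the $\delta$-thick part no non-trivially acting element has a fixed point, so $Y/\Gamma$ is a manifold there and small balls project as expected. I anticipate no genuine obstacle.
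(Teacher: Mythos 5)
Your proof is correct and is essentially the argument the paper sketches: disjoint injective $(\delta/4)$-balls lying in the $(\delta/2)$-thick part give the upper bound on $\abs{\calN}$ via the non-positive-curvature (G\"unther) volume lower bound, and a covering by balls about points of $\calN$ gives the lower bound via Bishop--Gromov. The only cosmetic difference is that you cover by $\delta$-balls where the paper uses $2\delta$-balls; both are correct, since a maximal $\delta$-discrete set is automatically a $\delta$-net.
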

The proof is standard, and uses two ideas. First, that balls of radius $\delta/4$ around points of $\calN$ are disjoint in $(Y / \Gamma)_{\geq \delta/2}$, and are injective images of balls in $Y$. Second, that the volume of injected balls can be bounded from below by the volume of Euclidean balls with the same radius. Similarly, if $\calN$ is maximal then the (metric) balls of radius $2 \delta$ around points of $\calN$ cover $(Y / \Gamma)_{\geq \delta}$, and the volume of these balls is can be bounded from above by the Bishop-Gromov theorem.

\section{Fixed submanfiolds}\label{sec:max-finite}

In this section, $X$ is a Riemannian globally symmetric space of non-positive sectional curvature, with $-1 \leq K \leq 0$. The constants $\eps$ and $m$ are the constant of the Margulis Lemma.

\subsection{Submanifolds fixed by maximal finite subgroups}
By a theorem of Kazhdan and Margulis \cite{kazhdan-margulis}, a finite volume locally symmetric manifold of non-compact type has a point with injectivity radius greater than some constant that depends only on the universal cover. We develop a variation on this result.

Let $M = X / \Gamma$ be an orbifold, and let $H < \Gamma$ is a finite group.
If we restrict ourselves to points on $\Fix(H)$, it does not make any sense to seek a point with $d_\Gamma > 0$. The best one can hope for is that the only elements that translate by less than some constant are those of $H$. Of course, even this cannot be expected if there happens to be a hyperbolic element that acts by a small translation on $\Fix(H)$.  Assuming this does not happen, we have to following.

\begin{prop}\label{prop:mini-km}
  Assume $\Gamma$ a group acting properly by isometries on $X$, such that $\vol_{\geq \mu}(X/\Gamma) < \infty$ for every $\mu > 0$.
  Let $\delta < \eps$.
  Then for every maximal finite subgroup $H < \Gamma$, either there is a hyperbolic element $\gamma \in \Gamma$ with $\Min(\gamma) \supseteq \Fix(H)$ and $\min d_\gamma \leq \delta$, or there exists a point $x \in \Fix(H)$ such that $\Gamma_{\delta/2m}(x) = H$.
\end{prop}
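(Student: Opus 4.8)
\emph{Proof proposal.} The plan is to argue by contradiction, modelling the argument on the proof of the Kazhdan--Margulis theorem, with the first alternative playing the role of the ``short closed geodesic'' obstruction. So assume that there is no hyperbolic $\gamma\in\Gamma$ with $\Min(\gamma)\supseteq\Fix(H)$ and $\min d_\gamma\le\delta$, and that $\Gamma_{\delta/2m}(x)\ne H$ for \emph{every} $x\in\Fix(H)$. Write $Y=\Fix(H)$. Since $H$ is finite, every $h\in H$ is elliptic, so $Y=\bigcap_{h\in H}\Min(h)$ is a complete totally geodesic submanifold --- itself a globally symmetric space with $-1\le K\le 0$ --- on which $H$ acts trivially. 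Two consequences of the maximality of $H$ together with proper discontinuity will be used repeatedly: (i) if $\gamma\in\Gamma$ fixes $Y$ pointwise then $\gamma\in H$; and (ii) if $\gamma\in\Gamma\setminus H$ is elliptic then $\Fix(\gamma)\cap Y=\emptyset$; in either case $\langle H,\gamma\rangle$ would otherwise be a finite subgroup of $\Gamma$ strictly containing $H$. Since $H\le\Gamma_{\delta/2m}(x)$ for every $x\in Y$ (the elements of $H$ fix $x$), the negation of the second alternative says exactly that for every $x\in Y$ there is an element $\gamma_x\in\Gamma\setminus H$ with $d_{\gamma_x}(x)<\delta/2m<\eps$.

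Fix $x\in Y$ and apply the Margulis Lemma (Theorem~\ref{lem:margulis}) at $x$. As $H$ fixes $x$ and $d_{\gamma_x}(x)<\eps$, the group $\Gamma_\eps(x)$ contains $\langle H,\gamma_x\rangle$, which by the remark above is \emph{infinite} (a finite overgroup of $H$ equals $H$, while $\gamma_x\notin H$). Hence the normal nilpotent subgroup $N_x=\Gamma_\eps(x)\cap\calN_x$ ($\calN_x\le\Isom(X)$ connected nilpotent) provided by the Lemma, which has index $\le m$, is an \emph{infinite, discrete, finitely generated nilpotent} group acting properly on $X$. The factor $2m$ in the statement is what couples $\gamma_x$ to this structure quantitatively: some power $\alpha_x:=\gamma_x^{\,l}$ with $l\le m$ lies in $N_x$, and by the triangle inequality $d_{\alpha_x}(x)\le l\cdot d_{\gamma_x}(x)<\delta/2<\delta$. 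Moreover $N_x$ contains the finite-index subgroup $H_0:=H\cap N_x$ of $H$, and $H_0$ fixes $Y$ pointwise.

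I would then analyse $N_x$ using the facts recalled in Section~\ref{sec:general}: its semisimple elements form an abelian subgroup, and if it contains a parabolic element it contains a central one. Being discrete, $N_x$ has no infinite-order elliptic element, so $N_x$ contains a hyperbolic element or a central parabolic one. The crux is the following dichotomy, to be established point by point over $Y$: either one can produce from $N_x$ a hyperbolic $\beta\in\Gamma$ commuting with a finite-index subgroup of $H$, with $\Min(\beta)\supseteq Y$ and $\min d_\beta\le d_{\alpha_x}(x)<\delta$ --- contradicting the first hypothesis; or, in the remaining situations ($\alpha_x$ elliptic with $\Fix(\alpha_x)\cap Y=\emptyset$, or hyperbolic with $\Min(\alpha_x)\cap Y\subsetneq Y$, or parabolic), the sublevel set $\{y\in Y:d_{\alpha_x}(y)\le\delta\}$ is a \emph{proper} convex subset of $Y$, using convexity of the displacement functions along the totally geodesic $Y$, the fact (valid because $Y$ is geodesically complete) that a convex function bounded above on $Y$ is constant, and that by (i) no nontrivial element has displacement identically $0$ on $Y$. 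Covering $Y$ by the sets $\{d_{\alpha_x}\le\delta\}$, $x\in Y$ --- a locally finite cover, by discreteness --- one then argues as in Kazhdan--Margulis that $Y$ lies entirely in the thin part of the quotient; transferring this to a product-like tubular neighbourhood of $Y$ inside $X$ (available since $Y$ is totally geodesic) forces $\vol_{\ge\mu}(X/\Gamma)=\infty$ for small $\mu>0$, the desired contradiction.

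The main obstacle is precisely this dichotomy: proving, in the ``good'' case, that the hyperbolic element extracted from $N_x$ actually has $\Min(\beta)\supseteq Y$ (this is where the commutation with $H_0$ and the non-expansion of nearest-point projections onto convex invariant sets must be used), and, in the complementary case, that the presence of a short non-$H$ element at \emph{every} point of $Y$ genuinely violates the finiteness of $\vol_{\ge\mu}(X/\Gamma)$. Making both parts precise requires the non-positive-curvature toolkit --- convexity of $d_\gamma$ and of its sublevel sets, behaviour of projections onto convex sets, and the flat-versus-parabolic alternative for virtually nilpotent groups acting properly on a symmetric space --- carefully combined with the structure of the nilpotent groups $N_x$ furnished by the Margulis Lemma. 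The conceptual point is that the first alternative is exactly the obstruction to ``pushing'' the short elements off $Y$ by moving the basepoint within $Y$.
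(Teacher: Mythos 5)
Your outline assembles the right ingredients (Margulis Lemma, the parabolic/hyperbolic case split, convexity of $d_\gamma$ and projections onto convex sets), and your preliminary observations (i) and (ii) about maximality are correct, but the terminal step is invalid and cannot be repaired. You want to conclude from ``$\{d_{\alpha_x}\le\delta\}$, $x\in Y$, cover $Y$'' that $Y$ lies in the thin part of $X/\Gamma$, and then that this forces $\vol_{\ge\mu}(X/\Gamma)=\infty$. But $Y=\Fix(H)$ is a proper (typically much lower-dimensional) totally geodesic submanifold; it is perfectly possible for the image of $Y$ in $X/\Gamma$ to sit entirely inside the $\delta$-thin part while $\vol_{\ge\mu}(X/\Gamma)$ remains finite for every $\mu$. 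A ``product-like tubular neighbourhood'' of $Y$ controls only a neighbourhood of $Y$, and that whole neighbourhood can be thin; it gives no lower bound on the volume of the thick part of $X/\Gamma$. So the contradiction you want is not there.

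The paper's proof sidesteps this precisely because it does not try to locate a thick point on $Y$. It introduces $\rho(x)=\inf\{d_\gamma(x):\gamma\text{ non-elliptic}\}$ on $S=\Fix(H)$, uses finite volume only through Lemma \ref{lem:rho-compact} (compactness of the image of $\{\rho\ge a\}$ in $X/\Gamma$, via the quasi-thick part and the embedding of Lemma \ref{lem:max-fixed-embed}) to show that $\rho$ attains its maximum on $S$, and performs a secondary minimization of the count $\#\rho$ among maximizers. The standing hypothesis (``no hyperbolic $\gamma$ with $\Min(\gamma)\supseteq S$ and $\min d_\gamma\le\delta$'') is then used not to produce a hyperbolic $\beta$ with $\Min(\beta)\supseteq Y$ from the Margulis nilpotent — which is what your dichotomy demands and which you rightly flag as unproven — but to guarantee that the relevant convex invariant set $C$ (a joint $\Min$-set, or a sublevel of $\sum d_{\gamma_i}$ in the parabolic case) meets $S$ in a proper subset, so one can take a ray in $S$ perpendicular to $C$ along which every $d_\gamma$ ($\gamma\in\Delta$) is non-decreasing and, by analyticity, some $d_\gamma$ with $\gamma\in\Sigma$ strictly increases; this strictly decreases $\#\rho$ without decreasing $\rho$, contradicting the choice of $y$. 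Finally, $\rho(y)\ge\delta$ at the maximizer is what yields $\Gamma_{\delta/2m}(y)=H$ — not thickness of $y$. Your outline is missing this variational structure (maximize $\rho$, then minimize $\#\rho$) and the projection-onto-convex-set perturbation, both of which are essential and replace the volume contradiction you were aiming for.
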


Several lemmata will proceed the proof of this proposition. In what follows we denote $S = \Fix(H)$, and by $\Gamma_S$ --- the stabilizer of $S$ in $\Gamma$.

\begin{lemma}\label{lem:max-fixed-embed}
  $S / \Gamma_S$ is a manifold and the map $S / \Gamma_S \to X / \Gamma$ is an embedding.
\end{lemma}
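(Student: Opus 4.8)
The plan is to prove the two assertions separately, starting with the fact that $S/\Gamma_S$ is a manifold. Here $S = \Fix(H)$ is a connected complete totally geodesic submanifold of $X$ (being the intersection of the minimal displacement sets $\Min(h) = \Fix(h)$ over $h \in H$, each of which is totally geodesic, and a finite intersection of such is nonempty since $H$ is a group of commuting... — more carefully, $H$ finite implies it has a common fixed point by the Cartan fixed point theorem, and $\Fix(H)$ is then totally geodesic). The group $\Gamma_S$ acts properly on $S$, so $S/\Gamma_S$ is an orbifold; to see it is genuinely a manifold, I would argue that no element of $\Gamma_S$ fixes a point of $S$ other than what is already accounted for — the key point being that if $\gamma \in \Gamma_S$ fixes some $x \in S$, then $\langle H, \gamma\rangle$ is a finite subgroup of $\Gamma$ (it is contained in the stabilizer of $x$, which is finite by properness and discreteness) containing $H$, hence equals $H$ by maximality, so $\gamma \in H$. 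Thus the $\Gamma_S$-action on $S$, modulo the kernel $H$ (which acts trivially on $S$), is free, and $S/\Gamma_S = S/(\Gamma_S/H)$ is a manifold.

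Next I would show the natural map $\iota : S/\Gamma_S \to X/\Gamma$ is injective. Suppose $x, y \in S$ have the same image in $X/\Gamma$, so $y = \gamma x$ for some $\gamma \in \Gamma$. Then $\gamma H \gamma^{-1}$ fixes $\gamma x = y \in S = \Fix(H)$. I want to conclude $\gamma \in \Gamma_S$, equivalently $\gamma H \gamma^{-1} = H$. The subgroup $\langle H, \gamma H \gamma^{-1}\rangle$ fixes $y$, hence is finite (stabilizer of a point under a proper discrete action), and it contains $H$; by maximality of $H$ it equals $H$, so $\gamma H \gamma^{-1} \subseteq H$, and since both are finite of the same order, $\gamma H \gamma^{-1} = H$, i.e. $\gamma \in \Gamma_S$. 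Therefore $x$ and $y$ already lie in the same $\Gamma_S$-orbit, proving injectivity.

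Finally, for the embedding (properness / local homeomorphism onto image), I would use that the $\Gamma$-action on $X$ is proper: given a point of $S/\Gamma_S$, choose a small ball $B$ around a lift $x \in S$ such that $\gamma B \cap B \neq \emptyset$ implies $\gamma \in \Gamma_x \subseteq H \subseteq \Gamma_S$. Then the image of $B \cap S$ in $X/\Gamma$ is a neighborhood of $\iota([x])$ within $\iota(S/\Gamma_S)$ homeomorphic to $(B\cap S)/H$, which matches the chart in $S/\Gamma_S$; hence $\iota$ is a homeomorphism onto its image, and since $S \hookrightarrow X$ is a totally geodesic (in particular smooth) embedding, $\iota$ is a smooth embedding. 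The main obstacle is the injectivity step — specifically making sure that maximality of $H$ is applied correctly to the \emph{ambient} group $\Gamma$ (not just to $\Gamma_S$) to force $\gamma H\gamma^{-1} = H$; the manifold and local-embedding parts are then routine consequences of properness together with maximality.
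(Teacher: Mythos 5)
Your plan is essentially the paper's proof: the manifold and injectivity steps use maximality of $H$ in $\Gamma$ in exactly the same way (stabilizer of a point of $S$ equals $H$; comparing $H$ and $\gamma H\gamma^{-1}$ inside $\Gamma_{\gamma x}$), and the embedding step differs only in presentation, using a local ball rather than the paper's sequential argument with converging points $\gamma_n x_n \to y$. One point you left implicit deserves a line: to see that $p(B\cap S)$ is a relatively open neighborhood of $\iota([x])$ in $\iota(S/\Gamma_S)$, you must check that if $s\in S$ and $\gamma s\in B$ for some $\gamma\in\Gamma$, then $\gamma\in\Gamma_S$; this follows because $\gamma H\gamma^{-1}$ fixes $\gamma s\in B$, so each $\gamma h\gamma^{-1}$ satisfies $(\gamma h\gamma^{-1})B\cap B\neq\emptyset$ and hence lies in $\Gamma_x=H$, giving $\gamma H\gamma^{-1}=H$ and $\gamma\in\Gamma_S$. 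This is the same observation the paper uses sequentially (that $\gamma_n H\gamma_n^{-1}\subseteq\Gamma_\delta(y)=H$ for large $n$). Your claim that injectivity is the ``main obstacle'' is debatable---the local embedding step is where the subtlety actually lies---but all three parts are sound once this detail is supplied.
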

\begin{proof}
  First, we show that every elliptic element in $\Gamma_S$ acts trivially on $S$, proving that $S / \Gamma_S$ is a manifold. If $\gamma \in \Gamma_S$ is elliptic then it fixes some point $x \in S$. The stabilizer of $x$ is a finite group containing $H$ and $\gamma$, hence by maximality $\gamma \in H$. Therefore, $\gamma$ acts trivially on $S$.

  We prove that the map $S / \Gamma_S \to X / \Gamma$ is an embedding. To prove that it injective, assume $x, \gamma x \in S$ for $\gamma \in \Gamma$. On one hand, $H \leq \Gamma_{\gamma x}$ because $\gamma x \in S$. On the other hand, $\gamma H \gamma^{-1} \leq \gamma \Gamma_x \gamma^{-1} = \Gamma_{\gamma x}$. Since $H$ is maximal, there is equality in both cases. Thus $\gamma$ normalizes $H$, and it keeps $S = \Fix(H)$ invariant, that is, $\gamma \in \Gamma_S$.

  To complete the proof, let $x_n$ be a sequence in $S$ and assume that $\gamma_n x_n$ converges to $y \in S$ for some sequence $\gamma_n \in \Gamma$. We show that $\gamma_n \in \Gamma_S$ for sufficiently large $n$.

  Fix $\delta > 0$ such that $\Gamma_\delta(y) = H$ (it exists because $y \in S$, and $\Gamma$ acts properly). For sufficiently large $n$, $d(\gamma_n x_n, y) < \delta / 2$, hence $\gamma_n H \gamma_n^{-1} \subseteq \Gamma_{\delta}(y) = H$, and maximality implies equality. Then $\gamma_n$ normalizes $H$, and hence stablizes $S$.
\end{proof}

We will need the following elementary lemma from \cite{bgs-book}*{\S 12}:
\begin{lemma}\label{lem:inj-point-near-small-group}
  For every $\delta > 0$ and $k \in \N$ there exists $0 < \mu < \delta$, such that following holds. If $\Gamma_\delta(x)$, $x \in X$, is finite of size at most $k$, then there exists a point $y \in X$ such that $d(x,y) < \delta/4$ and $d_\Gamma(y) > \mu$.
\end{lemma}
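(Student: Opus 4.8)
The plan is to construct $y$ by ruling out, one family at a time, the elements of $\Gamma$ that could translate a point of $B(x,\delta/4)$ by a small amount. First I would set $H=\Gamma_\delta(x)$ and note that, by the very definition of $\Gamma_\delta(x)$, every $\gamma\in\Gamma\setminus H$ satisfies $d_\gamma(x)\ge\delta$, hence $d_\gamma(y)\ge\delta-2d(x,y)>\delta/2$ for every $y$ with $d(x,y)<\delta/4$. So the elements outside $H$ cost nothing, wherever we place $y$ inside the ball, and the problem collapses to controlling the (at most $k-1$) nontrivial elements of $H$. Each such $\gamma$ has $2\le\ord(\gamma)\le\abs H\le k$; being of finite order it is elliptic, and $\Fix(\gamma)=\Min(\gamma)$ is a complete totally geodesic submanifold of $X$ of positive codimension.

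The crucial step is a uniform linear lower bound on the displacement near the fixed set: for a nontrivial elliptic $\gamma$ with $\ord(\gamma)\le k$ I claim
\begin{equation*}
  d_\gamma(y)\ \ge\ 2\sin(\pi/k)\cdot d\bigl(y,\Fix(\gamma)\bigr)\qquad\text{for all }y\in X .
\end{equation*}
To prove it I would take $q$ to be the (unique) projection of $y$ onto the convex set $\Fix(\gamma)$ and $c$ the shortest geodesic from $q$ to $y$; by the first variation formula $\dot c(0)$ is a unit vector orthogonal to $T_q\Fix(\gamma)=\ker(d\gamma_q-I)$. Naturality of $\exp_q$ under the isometry $\gamma$ gives $d_\gamma(c(t))=d\bigl(\exp_q(t\,d\gamma_q\dot c(0)),\exp_q(t\,\dot c(0))\bigr)$, whose right derivative at $t=0$ equals $\abs{(d\gamma_q-I)\dot c(0)}$. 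Since $d\gamma_q$ is orthogonal of finite order $\le k$, its eigenvalues are $k$-th roots of unity, so the smallest nonzero singular value of $d\gamma_q-I$ is at least $2\sin(\pi/k)$, and that right derivative is $\ge2\sin(\pi/k)$. Because $t\mapsto d_\gamma(c(t))$ is convex and vanishes at $t=0$, the displayed bound follows.

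It then remains to find $y\in B(x,\delta/4)$ lying at distance $\ge r_0$ from every $\Fix(\gamma)$, $\gamma\in H\setminus\{1\}$, for some $r_0=r_0(X,\delta,k)>0$. Here I would argue by volume: nonpositive curvature gives $\vol B(x,\delta/4)\ge\omega_n(\delta/4)^n$ with $n=\dim X$ (G\"unther), while for small $r$ the tube $\{z:d(z,\Fix(\gamma))<r\}\cap B(x,\delta/4)$ has volume at most $C(X,\delta)\,r$ --- the standard estimate for a thin tube around a totally geodesic submanifold of codimension $\ge1$, obtained by covering $\Fix(\gamma)\cap B(x,\delta/2)$ with $O\bigl(r^{-(n-1)}\bigr)$ metric balls of radius $r$ (their number bounded by Bishop--Gromov, using that the intrinsic and ambient metrics coincide on the totally geodesic $\Fix(\gamma)$), enlarging to radius $2r$, and applying Bishop--Gromov once more. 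Choosing $r_0$ small enough, the at most $k-1$ tubes of radius $r_0$ cannot cover $B(x,\delta/4)$; any $y$ in the complement then satisfies $d(x,y)<\delta/4$ and, combining the three steps, $d_\Gamma(y)\ge\min\bigl(\delta/2,\ 2\sin(\pi/k)\,r_0\bigr)=:\mu$, which depends only on $\delta$, $k$ and the fixed space $X$, and is $<\delta$.

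The only place I expect real difficulty is uniformity of $\mu$: a priori the convex function $d_\gamma$ could vanish on $\Fix(\gamma)$ to arbitrarily high order, making its sublevel sets around $\Fix(\gamma)$ uncontrollably fat, and it is precisely the constraint $\ord(\gamma)\le\abs H\le k$ that forces the eigenvalues of $d\gamma_q$ to be $k$-th roots of unity and hence pins the ``slope'' of $d_\gamma$ away from $\Fix(\gamma)$ below by $2\sin(\pi/k)$; the parameter $k$ reappears as the number ($\le k-1$) of tubes that must be dodged in the last step. Everything else --- the first variation computation, G\"unther, Bishop--Gromov, the tube estimate --- is routine, and the whole argument uses nothing about $\Gamma$ beyond the finiteness and bounded size of $\Gamma_\delta(x)$.
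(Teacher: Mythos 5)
Your argument is correct. Note first that the paper does not actually prove this lemma --- it is quoted from Ballmann--Gromov--Schroeder \S 12 --- so there is no in-text proof to compare against; what you have written is a valid self-contained substitute. The three ingredients all check out: (i) any $\gamma\notin\Gamma_\delta(x)$ acting non-trivially is in particular not a generator of $\Gamma_\delta(x)$, so $d_\gamma(x)\ge\delta$ and the $2$-Lipschitz property of displacement functions disposes of it; (ii) the linear lower bound $d_\gamma\ge 2\sin(\pi/k)\,d(\cdot,\Fix(\gamma))$ is right --- indeed in non-positive curvature you could shortcut the convexity/first-variation step, since $\exp_q$ is distance non-decreasing and hence $d_\gamma(c(t))\ge t\,\abs{(d\gamma_q-I)\dot c(0)}$ directly; (iii) the tube-versus-ball volume comparison gives an $r_0$ depending only on $\delta$, $k$, the dimension and the curvature bounds, which is exactly the uniformity the paper needs (it later insists $\mu$ work for all totally geodesic subspaces of dimension $\le\dim X$, and your constants do). Two cosmetic points. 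First, since $\Gamma$ may act non-faithfully, ``nontrivial elements of $H$'' should read ``elements of $H$ acting non-trivially''; for an element acting trivially $\Fix(\gamma)=X$ has codimension $0$ and the tube estimate fails, but such elements are excluded from $d_\Gamma$ by the paper's definition, so nothing breaks. Second, your final bound gives $d_\Gamma(y)\ge\min(\delta/2,\,2\sin(\pi/k)r_0)$ with a possibly non-strict inequality in the second entry, so take $\mu$ to be, say, half of that minimum to get the strict inequality $d_\Gamma(y)>\mu$ as stated.
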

Here, again, the constant $\mu$ can be chosen to accommodate for all spaces of dimension $\leq \dim(X)$.

\smallskip
For a point $x \in X$, let $\rho(x) = \inf_\gamma d_\gamma(x)$ where  $\gamma$ ranges over all non-elliptic elements in $\Gamma$. Note the if $x \in S$ then $\rho(x)$ is at most $d_{\Gamma_S}(x)$ (w.r.t. the action of $\Gamma_S$ on $S$).

To study the behavior of $\rho$ at points of $S$, it is useful to introduce the notion of quasi-thickness which was defined and studied in \cite{samet-betti}.

\begin{define}
Let $M = X / \Gamma$ be an orbifold.
For $\delta > 0$ and $k \in \N$, the \emph{$(\delta,k)$-quasi-thick part} of $M$ is defined
\begin{equation*}
 M_{\geq \delta, k} = \{ x \in X : \abs{\Gamma_\delta(x)} \leq k \} / \Gamma.
\end{equation*}
\end{define}

We will need the following simple fact.

\begin{claim*}
  If $\vol_{\geq \mu}(M) < \infty$ for every $\mu>0$, then $M_{\geq \delta, k}$ is compact for every $\delta>0, k \in \N$.
\end{claim*}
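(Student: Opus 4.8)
The plan is to show that the complement of $M_{\geq \delta, k}$ in $M$ has a neighborhood of infinity, so that $M_{\geq \delta, k}$ sits inside a fixed thick part, which is known to have finite volume and — being closed in a complete manifold with a lower bound on injectivity radius — is compact. More precisely, I would argue as follows. First, invoke Lemma \ref{lem:inj-point-near-small-group}: there is a constant $\mu = \mu(\delta, k) > 0$ such that whenever $x \in X$ has $\Gamma_\delta(x)$ finite of size at most $k$, there is a point $y$ with $d(x,y) < \delta/4$ and $d_\Gamma(y) > \mu$. Pushed down to $M = X/\Gamma$, this says precisely that every point of the quasi-thick part $M_{\geq \delta, k}$ lies within distance $\delta/4$ of the genuine thick part $M_{\geq \mu} = (X/\Gamma)_{\geq \mu}$. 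Hence $M_{\geq \delta, k}$ is contained in the closed $(\delta/4)$-neighborhood $N$ of $M_{\geq \mu}$.

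The second step is to see that $N$ is compact. By hypothesis $\vol_{\geq \mu'}(M) < \infty$ for every $\mu' > 0$; choose $\mu' = \mu'(\mu, \delta)$ small enough (e.g. $\mu' \leq \mu$ and such that the $(\delta/4)$-neighborhood of any point with $d_\Gamma > \mu$ consists of points with $d_\Gamma > \mu'$ — this uses only that displacement functions are $1$-Lipschitz in the base point, so $d_\gamma$ changes by at most $\delta/2$ over a ball of radius $\delta/4$, and one takes $\mu' = \mu - \delta/2$ after possibly shrinking $\delta$, or simply observes the displacement bound directly). Then $N \subseteq M_{\geq \mu'}$, so $N$ has finite volume. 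Moreover $M_{\geq \mu'}$ is a closed subset of the complete length space $M$ on which the injectivity radius is bounded below by $\mu'/2$; a standard packing argument (as in Proposition \ref{prop:discrete-vol-est}) shows that a closed, finite-volume subset with injectivity radius bounded below is totally bounded, hence, being complete, compact. Since $M_{\geq \delta, k}$ is a closed subset of the compact set $N$ (closedness of the quasi-thick part being immediate from the definition: the condition $\abs{\Gamma_\delta(x)} \le k$ is closed because $\Gamma$ acts properly, so only finitely many $\gamma$ contribute near a given point), it is compact.

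The one point requiring a little care — and the step I expect to be the main obstacle — is the passage from "finite volume plus injectivity radius bounded below" to "compact." The subtlety is that $M_{\geq \mu'}$ need not be a manifold with boundary, and a priori could have infinitely many connected components or fail to be connected at infinity; what rules this out is exactly the volume bound combined with the packing estimate, which forces any $\mu'$-separated subset to be finite. Concretely: take a maximal $\mu'$-separated set in $N$; by Proposition \ref{prop:discrete-vol-est} its cardinality is at most $c(X, \mu') \cdot \vol_{\geq \mu'/2}(M) < \infty$; the corresponding balls of radius $\mu'$ cover $N$; hence $N$ is a finite union of relatively compact sets and is therefore compact. This gives the claim.
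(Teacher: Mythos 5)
Your plan is close in spirit to the paper's argument --- both rest on Lemma \ref{lem:inj-point-near-small-group} plus a packing/volume estimate --- but your second step has a genuine gap. You want to show that $N$, the closed $(\delta/4)$-neighborhood of $M_{\geq\mu}$, is contained in some thick part $M_{\geq\mu'}$ with $\mu'>0$. The Lipschitz estimate you invoke gives $\mu' = \mu - \delta/2$ (the displacement function $d_\gamma$ is in fact $2$-Lipschitz, not $1$-Lipschitz as you write, but you used the factor $2$ correctly when you said $d_\gamma$ changes by at most $\delta/2$ over a radius-$\delta/4$ ball). The problem is that Lemma \ref{lem:inj-point-near-small-group} only guarantees $\mu < \delta$, and typically $\mu \ll \delta$, so $\mu - \delta/2$ can very well be negative. ``Shrinking $\delta$'' does not help: $\mu = \mu(\delta,k)$ is produced by the lemma \emph{after} $\delta$ is chosen and always satisfies $\mu < \delta$, so you cannot force $\mu > \delta/2$. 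And the containment $N \subseteq M_{\geq \mu'}$ really is false in general --- the $(\delta/4)$-neighborhood of the $\mu$-thick part can sweep right into the orbifold singular locus, where $d_\Gamma = 0$, whenever $\delta/4 > \mu/2$.

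The repair is to decouple the two things you were trying to do at once. First show directly that $M_{\geq\mu}$ is compact: it has finite volume by hypothesis, and by Proposition \ref{prop:discrete-vol-est} a maximal $\mu$-separated subset of $M_{\geq\mu}$ is finite; closed balls in $M$ are compact since $M = X/\Gamma$ is a proper length space, so $M_{\geq\mu}$ is covered by finitely many compact balls and, being closed, is compact. Then $N$, the closed $(\delta/4)$-neighborhood of a compact set in the proper space $M$, is closed and bounded, hence compact --- no thickness claim about $N$ is needed. Finally $M_{\geq\delta,k}$, being closed and contained in $N$, is compact; your closedness argument for the quasi-thick part is fine. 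This salvaged version is logically different in shape from the paper's proof, which avoids building any ambient compact set: the paper supposes $M_{\geq\delta,k}$ contains an infinite $\delta$-separated sequence, uses the lemma to plant a disjoint family of uniformly-sized injected balls inside $M_{\geq\mu/2}$, and contradicts finiteness of $\vol_{\geq\mu/2}(M)$ directly. Both are correct once your gap is fixed; the paper's is a bit shorter because it never needs to know that $M$ is proper or that $M_{\geq\mu}$ is itself compact.
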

\begin{proof}
  Let $\delta > 0$ and $k \in \N$. Let $x \in M_{\geq \delta, k}$ and fix a lift $\tilde x \in X$. By Lemma \ref{lem:inj-point-near-small-group}, there is some $\mu < \delta$ such that the ball of radius $\delta / 2$ around $\tilde x$ contains a ball of radius $\mu / 4$ that injects to the $\frac{\mu}{2}$-thick part of $M$ (the displacement function is 2-Lipschitz).

  If $M_{\geq \delta, k}$ is not compact, then there is a sequence of points $x_i$ in $M_{\geq \delta, k}$ such that $d(x_i, x_j) > \delta$ for $i \neq j$. Then the balls of radius $\delta/2$ around these points are disjoint, and the intersection of each of these balls with $M_{\geq \frac{\mu}{2}}$ contains an injected ball of radius $\mu/4$. Since these disjoint injected balls all have the same volume, this contradicts the assumption that $M_{\geq \frac{\mu}{2}}$ has finite volume.
\end{proof}

\begin{lemma}\label{lem:rho-compact}
  For every $a >0$, the image of the set $C = \{ x \in S \; | \; \rho(x) \geq a \}$ in $X / \Gamma$ is compact.
\end{lemma}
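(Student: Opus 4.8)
The plan is to show that the image $\bar C$ of $C = \{x \in S : \rho(x) \geq a\}$ in $X/\Gamma$ is both contained in a fixed quasi-thick part of $M = X/\Gamma$ (which we know is compact by the Claim) and closed in $X/\Gamma$; compactness then follows. The two ingredients are: (i) a uniform bound relating $\rho(x)$ to the size of $\Gamma_\delta(x)$, so that $\rho(x) \geq a$ forces $x$ into $M_{\geq \delta, k}$ for appropriate $\delta, k$; and (ii) a properness/closedness argument using the embedding of $S/\Gamma_S$ from Lemma \ref{lem:max-fixed-embed}.

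First I would pin down step (i). Choose $\delta = \min(a, \eps)$. For $x \in C$, every element of $\Gamma$ with $d_\gamma(x) < \delta \leq a$ must be elliptic, by definition of $\rho$; in particular each such $\gamma$ fixes a point within distance $\delta/2$ of $x$, and since $x \in S = \Fix(H)$ with $H$ maximal, a short argument (as in the proof of Lemma \ref{lem:max-fixed-embed}) shows these elliptic elements conjugate $H$ into $\Gamma_{2\delta}(x)$... more simply: the subgroup $\Gamma_\delta(x)$ is generated by elliptic elements and, by the Margulis Lemma, has a nilpotent subgroup $N$ of index $\leq m$ whose semisimple elements form an abelian group; since every generator is elliptic hence semisimple, and a nilpotent group generated by elements of a normal finite-index subgroup that are all elliptic... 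Here I must be slightly careful — the cleanest route is: $\Gamma_\delta(x)$ contains no hyperbolic and no parabolic elements (parabolics are excluded since $\delta \le \eps$ forces the Margulis nilpotent structure and a parabolic would have to be a small translation, contradicting $\rho(x)\ge a$ once we note $\rho$ also controls translations; hyperbolics are excluded directly by $\rho(x) \ge a > \delta$). Hence $\Gamma_\delta(x)$ consists of elliptic elements, and being a discrete group all of whose elements are elliptic with a common almost-fixed point, it is finite of size bounded by a constant $k = k(X)$ depending only on the dimension (using that its Margulis nilpotent subgroup has abelian semisimple part and bounded rank/torsion). Thus $x$ maps into $M_{\geq \delta, k}$, which is compact by the Claim.

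It remains to prove $\bar C$ is closed, which is step (ii) and the main obstacle. Suppose $x_n \in C$ and $\gamma_n x_n \to y$ in $X$; I want $y$ to have a $\Gamma$-translate lying in $C$. By the embedding statement of Lemma \ref{lem:max-fixed-embed}, the image of $S$ in $X/\Gamma$ is a closed subset (it is the image of the properly embedded $S/\Gamma_S$), so after replacing $x_n$ by $\gamma_n x_n$ we may assume $x_n \to y$ with $y \in S$. Then lower semicontinuity fails in general for an infimum over infinitely many $\gamma$, so the real work is to show $\rho(y) \geq a$ cannot fail: if $\rho(y) < a$ there is a non-elliptic $\gamma$ with $d_\gamma(y) < a$, hence $d_\gamma(x_n) < a$ for large $n$ by continuity of $d_\gamma$, contradicting $x_n \in C$ — wait, this direction is immediate, so closedness is actually easy; lower semicontinuity of $\rho$ holds because $\rho$ is an infimum of the continuous functions $d_\gamma$ over the fixed index set of non-elliptic elements, which are the same for all points. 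So $\{x \in S : \rho(x) \ge a\}$ is closed in $S$, its image in $X/\Gamma$ is closed (using the embedding), and the containment in the compact set $M_{\geq \delta,k}$ from step (i) finishes the proof. The genuinely delicate point is therefore step (i): verifying that finiteness of $\Gamma_\delta(x)$ — and a uniform bound $k$ on its order — really does follow from $\rho(x) \geq a$ together with the refined Margulis Lemma, handling parabolic elements correctly via the central-parabolic observation recorded earlier in the excerpt.
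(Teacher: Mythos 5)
Your overall strategy is exactly the paper's: show that $C$ lands in a quasi-thick part $M_{\geq \delta,k}$ (compact by the Claim) and that its image is closed via Lemma \ref{lem:max-fixed-embed} together with lower semicontinuity of $\rho$. Step (ii) is fine and matches the paper. The problem is step (i), which you yourself flag as the delicate point and then do not actually close. The gap is the passage from ``every \emph{generator} of $\Gamma_\delta(x)$ is elliptic'' (which does follow from $\rho(x)\geq a$ when $\delta\leq a$) to ``$\Gamma_\delta(x)$ \emph{consists} of elliptic elements and is finite.'' A group generated by elliptic elements need not consist of elliptic elements, and two elliptic isometries can generate an infinite group, so this step fails as written. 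Moreover, your claimed uniform bound $k=k(X)$ on $\abs{\Gamma_\delta(x)}$ depending only on the dimension is false (the introduction itself notes that lattices of bounded covolume in $\SL_2(\R)$ contain arbitrarily large finite subgroups); fortunately no such uniform bound is needed, since the Claim holds for each fixed $k$.

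The paper closes the gap as follows. Take $\delta = a/2m$ and let $N$ be the Margulis normal nilpotent subgroup of $\Gamma_{a/2m}(\tilde x)$. Because $[\Gamma_{a/2m}(\tilde x):N]\leq m$, $N$ is generated by words of length at most $2m$ in the generators of $\Gamma_{a/2m}(\tilde x)$, hence by elements $\gamma$ with $d_\gamma(\tilde x)< a$; by $\rho(\tilde x)\geq a$ these are elliptic, hence semisimple. Since the semisimple elements of the ambient connected nilpotent group form an abelian subgroup, these generators commute, so they have a common fixed point and $N$ is finite; hence $\Gamma_{a/2m}(\tilde x)$ is finite, and by maximality of $H$ it equals $H$. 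This puts $x$ in $M_{\geq a/2m,\,\abs{H}}$, with $k=\abs{H}$ rather than a dimension constant. Note that the scale $a/2m$ (not $\min(a,\eps)$) is what makes the word-length argument produce generators of $N$ with displacement below $a$; with your larger $\delta$ the generators of $N$ would only be bounded by $2m\delta$, which may exceed $a$.
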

\begin{proof}
 Let $x \in X / \Gamma$ be a point with preimage $\tilde x \in C$. Let $N$ be the normal nilpotent subgroup of $\Gamma_{a/2m}(\tilde x)$. Since its index is bounded by $m$, it can be generated in words of length at most $2m$ in the generators of $\Gamma_{a/2m}(\tilde x)$\footnote{For a proof of this, see the proof of Lemma 2.2 in \cite{samet-betti}.}. Hence, $N$ is generated by elements $\gamma$ with $d_\gamma(x) < a$. Since $\rho(\tilde x) \geq a$, these generators must be elliptic, and therefore they commute. It follows that $N$, and hence $\Gamma_{a/2m}(\tilde x)$ is finite. Therefore, $\Gamma_{a/2m}(\tilde x) = H$ by the maximality of the latter. It follows that $x$ is in the $(\frac{a}{2m}, \abs{H})$-quasi-thick part of $M$, which is compact.

 Since $C$ is closed and $\Gamma_S$-invariant, its image in $S / \Gamma_S$ is closed. Hence, by Lemma \ref{lem:max-fixed-embed} its image in $X / \Gamma$ is closed, and the claim follows.
\end{proof}

\begin{proof}[Proof of Proposition \ref{prop:mini-km}]
  Suppose that there are no hyperbolic elements $\gamma \in \Gamma$ with $\min d_\gamma \leq \delta$ and $\Min(\gamma) \supseteq S$.

  It follows from Lemma \ref{lem:rho-compact} that there exists a point $y \in S$ where $\rho$ attains its maximum on $S$. Let us denote by $\#\rho(x)$ the number of elements $\gamma \in \Gamma$ for which $d_\gamma(x) = \rho(x)$. We can assume $y$ is chosen such that $\#\rho(y)$ is minimal (among points in $S$ where $\rho$ is maximal).

  We claim that $\rho(y) \geq \delta$.
  Let us first show that this will prove the proposition. Indeed, we have observed in the proof of Lemma \ref{lem:rho-compact} that if $\rho(y) \geq \delta$ then $\Gamma_{\delta/2m}(y)$ is finite, hence equal to $H$ by maximality.

  Now suppose contrarily that $\rho(y) < \delta$. Let $\mu = \rho(y)$, and let $\Sigma$ be the set of non-elliptic elements $\gamma$ with $d_\gamma(y) = \mu$. Let $\Delta$ be the group generated by $H$ and $\Sigma$, and let $N$ the normal nilpotent subgroup of $\Delta$.

  There are two cases. First, let us assume that $\Sigma$ contains a parabolic element.
  Then $N$ contains parabolic elements, and therefore contains a central parabolic element $\gamma_1 \in N$. Since $N$ has finite index in $\Delta$, $\gamma_1$ has finitely many conjugates, say, $\gamma_1,\ldots,\gamma_k$. Define $D(x) = \sum_i d_{\gamma_i}(x)$. Then for every $a>0$, $C = D^{-1}([0,a])$ is closed, convex, and $\Delta$-invariant. For sufficiently small $a$, $y \not\in C$. Let $z \in C$ be the projection of $y$ onto $C$, and let $c : [0, \infty)$ be a geodesic ray with $c(0) = z$ and $c(t_0) = y$. Every $\gamma \in \Delta$ keeps $C$ invariant, hence $d_\gamma$ is non-decreasing along $c$.
  Also, since $H$ fixes $y$ and keeps $C$ invariant, it also fixes $z$, and hence fixes $c$ pointwise. We claim that for some $\gamma \in \Sigma$, $d_\gamma$ is strictly increasing along $c$. If $d_\gamma \circ c$ is not strictly increasing, then it is locally constant at some point, and since $d_\gamma \circ c$ is analytic, this implies that $d_\gamma$ is constant along $c$. Since this is true for the generators of $\Delta$, $d_\gamma \circ c$ is bounded and thus constant for every $\gamma \in \Delta$. This would imply that $D$ is constant along $c$ and thus $y \in C$, which is a contradiction.

  We have shown that for $t > t_0$, $d_\gamma(c(t)) \geq \mu$ for all $\gamma \in \Sigma$, and $d_\gamma(c(t)) > \mu$ for at least one $\gamma \in \Sigma$. We may choose $t_1 > t_0$ sufficiently small, such that for every $\gamma \in \Gamma$, $d_\gamma(c(t_1)) > \mu$ whenever $d_\gamma(c(t_0)) > \mu$.

  Therefore, $\rho(c(t_1)) \geq \mu$, and if $\rho(c(t_1)) = \mu$ then $\#\rho(c(t_1)) \leq \#\rho(y) - 1$.
  This is a contradiction to the choice of $y$.

  Now, suppose $\Sigma$ does not contain a parabolic element, hence it consists of hyperbolic elements. Let $i$ be the index of $N$ in $\Delta$. Then the elements $\gamma^i$, $\gamma \in \Sigma$, are hyperbolic and thus commute. It follows that $C = \bigcap_{\gamma \in \Delta} \Min(\gamma^i)$ is non-empty, and it is closed, convex and $\Delta$-invariant. Let $z \in C$ be the projection of $y$ to $C$. Then, as before, $z \in S$.
  By our assumption, there are no hyperbolic elements $\gamma$ with $\Min(\gamma) \supseteq S$ and $\min d_\gamma \leq \delta$. Therefore, $C \cap S$ is properly contained in $S$. As in the previous case, we take a geodesic ray in $S$ which starts at $z$ and is perpendicular to $C$ at $z$. Along this geodesic, $d_\gamma$ is non-decreasing for every $\gamma \in \Delta$, and is strictly increasing for at least one $\gamma \in \Sigma$. This leads to contradiction, as in the previous case.
\end{proof}

\subsection{Stable singular submanifolds}
We now formulate a key proposition which allows inductive reasoning needed for the proof of the main theorem. The statement of the proposition is an analogy to Gromov's \cite{bgs-book}*{Thm 12.11}, and parts of its proof follow along the lines of his proof.

Let $A \subset \Gamma$ be a set of stable semisimple isometries. If $Y = \bigcap_{\alpha \in A} \Min(\alpha)$ is non-empty then it is a connected complete totally geodesic submanifold of $X$. If, furthermore, $\min d_\alpha < \eps$ for every $\alpha \in A$, we call $Y$ a \emph{stable singular submanifold}.
Two submanifolds $Y_1, Y_2$ are called \emph{non-conjugate} if $Y_2 \neq \gamma Y_1$ for all $\gamma \in \Gamma$.

Note that if $A$ defines a stable singular submanifold $Y$ then the elements of $A$ commute. Indeed, if $\alpha_1, \alpha_2 \in A$ then $\alpha_1,\alpha_2 \in \Gamma_\eps(y)$ for some $y \in Y$. If $N$ is the normal nilpotent subgroup of $\Gamma_\eps(y)$, then $\alpha_1^i,\alpha_2^i \in N$ for some $i \leq m$. It follows that $\alpha_1^i$ and $\alpha_2^i$, hence by stability also $\alpha_1$ and $\alpha_2$, commute.

For a submanifold $Y$ we denote by $\Gamma_Y$ the stabilizer of $Y$ in $\Gamma$.

\begin{prop}\label{prop:inductive-vol-bound}
  Let $W$ be a connected complete totally geodesic submanifold of $X$, and let $\Gamma$ be a group acting properly by isometries on $W$. Let $\Sigma$ be a set of non-conjugate stable singular proper submanifolds in $W$. For every $\eps_1>0$ there is are constants $\eps_2 = \eps_2(\eps_1,X)$ and $c = c(\eps_1,X)$ such that
  \begin{equation}\label{eq:bound-for-stable-ess-vol}
    \sum_{Y \in \Sigma} \ncv_{\geq \eps_1}(Y / \Gamma_Y) \leq c \cdot \ncv_{\geq \eps_2}(W/\Gamma).
  \end{equation}
\end{prop}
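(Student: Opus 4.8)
The plan is to follow Gromov's strategy for \cite{bgs-book}*{Thm 12.11}: bound each $\ncv_{\geq\eps_1}(Y/\Gamma_Y)$ by the cardinality of a discrete net on $Y$, transport these nets into a genuinely thick region of $W/\Gamma$ by moving a bounded distance, and then estimate the total count by $\ncv_{\geq\eps_2}(W/\Gamma)$ via Proposition~\ref{prop:discrete-vol-est}. The first move is to dispose of Euclidean directions. Let $W = W_1\times W_2$, $W_2\isom\R^k$, be the Clifford splitting of $W$ with respect to $\Gamma$. Every $Y\in\Sigma$ respects it: a defining isometry $\alpha$ of $Y$ is stable with $\min d_\alpha<\eps$, so by Lemma~\ref{lem:stable-clifford-commute} it commutes with all Clifford isometries of $\Gamma$; hence its $W_2$-component commutes with a cocompact group of translations of $\R^k$ and is itself a translation, so $\Min(\alpha)=\Min(\alpha|_{W_1})\times W_2$ and $Y=Y_1\times W_2$ with $Y_1\subsetneq W_1$ totally geodesic. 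Using $C(\beta^i)=C(\beta)$ one checks $\alpha|_{W_1}$ is again stable; thus the $Y_1$ are non-conjugate stable singular proper submanifolds of $W_1$ for $\Gamma_1$, the $W_2$-translations of $\Gamma$ lie in $\Gamma_Y$, and neither side of \eqref{eq:bound-for-stable-ess-vol} changes if we pass to $(W_1,\Gamma_1)$. So we may assume $\ncv=\vol$ throughout and relabel.

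Next, for each $Y$ fix a maximal $\eps_1$-discrete set $\calN_Y\subseteq(Y/\Gamma_Y)_{\geq\eps_1}$; by the second inequality of Proposition~\ref{prop:discrete-vol-est}, $\vol_{\geq\eps_1}(Y/\Gamma_Y)\leq c_0\abs{\calN_Y}$ with $c_0=c_0(\eps_1,X)$. It therefore suffices to produce $\eps_2=\eps_2(\eps_1,X)>0$ and, for each $y\in\calN_Y$, a point $z(Y,y)\in(W/\Gamma)_{\geq\eps_2}$ such that every point of $W/\Gamma$ equals $z(Y,y)$ for at most $C=C(\eps_1,X)$ pairs $(Y,y)$. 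Indeed, applying the first inequality of Proposition~\ref{prop:discrete-vol-est} to a maximal $\eps_2$-discrete subset of $\{z(Y,y)\}$ (noting only boundedly many $z(Y,y)$ cluster near a thick point) yields $\sum_{Y}\abs{\calN_Y}\leq C\,c_1\,\vol_{\geq\eps_2/2}(W/\Gamma)$, and we rename $\eps_2/2$.

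The heart of the argument is the construction of $z(Y,y)$, which runs parallel to the proof of Proposition~\ref{prop:mini-km}. Fix $y\in\calN_Y$ with lift $\tilde y\in Y\subset W$. The point $[\tilde y]$ is generally not thick in $W/\Gamma$, because elements of $\Gamma\setminus\Gamma_Y$ may carry $Y$ onto a nearby conjugate and so displace $\tilde y$ by little. One moves $\tilde y$ to a point $\tilde y'$ at a distance bounded in terms of $\eps_1$ and $X$ at which every element of displacement $<\eps_1$ is under control: an element of $\Gamma_Y$ acting non-trivially on the non-Euclidean factor of $Y$ already displaces $\tilde y$ by $\geq\eps_1$ (distances in the totally geodesic $Y$ agree with those in $W$); an element of $\Gamma_Y$ acting trivially there is invisible to $\ncv_{\geq\eps_1}(Y/\Gamma_Y)$; for $\gamma\notin\Gamma_Y$ with $\Min(\gamma)\not\supseteq Y$, convexity of $d_\gamma$ together with the properness of $Y\subsetneq W$ gives a normal geodesic from $\tilde y$ along which $d_\gamma$ is strictly increasing, exactly as in the projection estimates of \S\ref{sec:general}; and a small hyperbolic or parabolic element whose minimal set or horospheres crowd $Y$ is excluded by the dichotomy-and-maximization of Proposition~\ref{prop:mini-km}. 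Once $\Gamma_{\eps_1}(\tilde y')$ is finite, Lemma~\ref{lem:inj-point-near-small-group} provides $\tilde z$ with $d(\tilde y',\tilde z)<\eps_1/4$ and $d_\Gamma(\tilde z)>\mu$, $\mu=\mu(\eps_1,X)$; put $\eps_2:=\mu$ and $z(Y,y):=[\tilde z]$. I expect this step — verifying that a move of length depending only on $\eps_1$ and $X$ (off $Y$, generic with respect to the finitely many small transverse elements, combined with the Proposition~\ref{prop:mini-km}-style maximization to kill small hyperbolic and parabolic elements clustering along $Y$) really lands at a point with $\Gamma_{\eps_1}$ finite — to be the main obstacle; everything before and after is bookkeeping.

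Finally, the multiplicity bound. If $z(Y,y)=z(Y',y')$, then lifting, a point $\tilde z$ with $d_\Gamma(\tilde z)>\eps_2$ lies within a fixed radius $r_0=r_0(\eps_1,X)$ of $\tilde y$ and of a $\Gamma$-translate of $\tilde y'$. At the thick point $\tilde z$ the set $\{\gamma\in\Gamma:d_\gamma(\tilde z)<\eps+2r_0\}$ has size at most $C_1(\eps_1,X)$, since the points $\gamma\tilde z$ lie in a ball of fixed radius and are pairwise $\eps_2$-separated, by the Bishop-Gromov inequality; hence at most $2^{C_1}$ stable singular submanifolds pass within $r_0$ of $\tilde z$, each $\Gamma$-conjugate to at most one member of $\Sigma$ because $\Sigma$ is a set of non-conjugate submanifolds, and for a fixed member and fixed conjugating element the $\eps_1$-separation of $\calN_Y$ permits at most one $y$. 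Thus $(Y,y)\mapsto z(Y,y)$ has multiplicity at most $2^{C_1}$, which serves as the constant $C$ above, and the proof is complete.
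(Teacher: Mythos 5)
Your overall strategy (discretize the thick part of each $Y/\Gamma_Y$, transport the net to thick points of $W/\Gamma$ at bounded distance, bound the multiplicity, and conclude with Proposition \ref{prop:discrete-vol-est}) is the same as the paper's, and your multiplicity bound via the $2^{C_1}$ subsets of the finite set of small-displacement elements at a thick point is a legitimate (and arguably cleaner) alternative to the paper's orthogonal-tangent-subspace count. But the step you yourself flag as ``the main obstacle'' is exactly the heart of the proof, and what you sketch for it does not work. First, you cannot run the construction for all $Y\in\Sigma$ at once: if $Y=\bigcap_{\alpha\in A_Y}\Min(\alpha)$ is not maximal in $\Sigma$, there are defining elements $\alpha$ with $\Min(\alpha)\supsetneq Y$, and a geodesic leaving $Y$ can remain inside $\Min(\alpha)$, so $d_\alpha$ stays equal to $\min d_\alpha<\eps$ no matter how far you travel; if such an $\alpha$ is hyperbolic with tiny minimal displacement, no point at bounded distance from $\tilde y$ is $\eps_2$-thick. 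The paper disposes of this by an induction on $\dim W$ that reduces to \emph{maximal} stable submanifolds, for which $\Min(\alpha)=Y$ for every $\alpha\in A_Y$ and the perpendicular geodesic strictly increases every $d_\alpha$; your proposal has no substitute for this reduction. Second, your appeal to ``the dichotomy-and-maximization of Proposition \ref{prop:mini-km}'' is misplaced: that proposition concerns fixed-point sets of maximal finite subgroups, its dichotomy explicitly \emph{allows} the bad case of a small hyperbolic element with $\Min\supseteq\Fix(H)$ rather than excluding it, and it is not used in the paper's proof of this proposition. What actually controls the elements $\beta$ with $d_\beta(z)$ small at the pushed-out point $z$ is a separate argument: one stops the geodesic at the first time some $d_\alpha$ reaches $\eps/2$, passes to a stable power $\beta^j$, uses Lemma \ref{lem:stable-clifford-commute} and commutation with $\alpha$ to force $\beta^j\in\Gamma_Y$, uses the $\eps_1$-thickness of $y'$ in $Y_1/p_1(\Gamma_Y)$ to force $\beta^j$ to act trivially on $Y_1$, and then invokes the Bieberbach theorem on the Euclidean factor together with the choice of $z$ to conclude $\beta$ has bounded order; only then does the Margulis Lemma give $|\Gamma_{\eps_4}(z)|$ bounded so that Lemma \ref{lem:inj-point-near-small-group} applies. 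None of this is in your sketch.

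A secondary issue: after your initial Clifford reduction you ``assume $\ncv=\vol$ throughout,'' but the Clifford splitting of $Y$ with respect to $\Gamma_Y$ may be strictly larger than the one inherited from $W$ (the paper writes $Y=Y_1\times\R^k\times\R^d$ with $Y_1\times\R^k\subset W_1$), so $\ncv_{\geq\eps_1}(Y/\Gamma_Y)$ is not $\vol_{\geq\eps_1}(Y/\Gamma_Y)$ of the reduced object; the net must be taken in the thick part of $Y_1/p_1(\Gamma_Y)$, and the compatibility of the two splittings (via Lemma \ref{lem:stable-clifford-commute}) has to be argued, as the paper does. This is repairable bookkeeping, but as it stands the left-hand side you bound is not the left-hand side of \eqref{eq:bound-for-stable-ess-vol}.
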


\begin{proof}
The proof is by induction on the dimension of $W$.
Clearly, the claim is true for submanifolds of dimension 0 because there are no proper submanifolds.

For $Y \in \Sigma$, we denote by $\Sigma_Y = \{ Z \in \Sigma \; : \; Z \subsetneq Y \}$. Let $Z \in \Sigma_Y$ and denote by $A_Y, A_Z$ the (maximal) set of stable elements such that $Y = \bigcap_{\alpha \in A_Y} \Min(\alpha)$ and $Z = \bigcap_{\alpha \in A_Z} \Min(\alpha)$. Then $A_Y \subsetneq A_Z$. Fix some $z \in Z$. The elements of $A_Z$ commute, from which we deduce that every element of $A_Z$ keeps $Y$ invariant. Thus, $Z$ can be considered a stable submanifold of $Y$ with respect to $\Gamma_Y$.

By induction, we may assume there are constants $\eps_2'$ and $c'$ such that \eqref{eq:bound-for-stable-ess-vol} holds for submanifolds with dimension lower than $\dim(W)$. Hence, considering the action of $\Gamma_Y$ on $Y$,
\begin{equation*}
  \sum_{Z \in \Sigma_Y} \ncv_{\geq \eps_1}(Z / \Gamma_{Z,Y}) \leq c' \cdot \ncv_{\geq \eps_2'}(Y / \Gamma_Y),
\end{equation*}
where $\Gamma_{Z,Y} = \Gamma_Y \cap \Gamma_Z$.

Note that since $\Gamma_{Z,Y} < \Gamma_Z$, it is not hard to deduce that $\ncv_{\geq \eps_1}(Z / \Gamma_Z) \leq \ncv_{\geq \eps_1}(Z / \Gamma_{Z,Y})$.
The important point is that every element of $A_Y$ keeps $Z$ invariant, so it commutes with every Clifford isometry in $\Gamma_Z$ (by Lemma \ref{lem:stable-clifford-commute}). Therefore, every Clifford isometry in $\Gamma_Z$ keeps $Y$ invariant. It follows that $Z$ has the same Clifford splitting w.r.t $\Gamma_Z$ or $\Gamma_{Z,Y}$.

Let $\Sigma' \subseteq \Sigma$ be the set of stable submanifolds which are maximal in $\Sigma$ with respect to inclusion. Then we have
\begin{equation*}
\begin{split}
   &\sum_{Y \in \Sigma} \ncv_{\geq \eps_1}(Y / \Gamma_Y) \leq
   \sum_{Z \in \Sigma'} \sum_{Z \supseteq Y \in \Sigma} \ncv_{\geq \eps_1}(Y / \Gamma_Y) \\
   &\leq \sum_{Z \in \Sigma'} \ncv_{\geq \eps_1}(Z / \Gamma_Z) +
       \sum_{Z \in \Sigma'} c' \ncv_{\geq \eps_2'}(Z / \Gamma_Z)
   \leq (c' + 1) \sum_{Z \in \Sigma'} \ncv_{\geq \eps_2'}(Z / \Gamma_Z)
\end{split}
\end{equation*}
(here we assume $\eps_2' < \eps_1$). Our problem is thus reduced to the case of maximal stable submanifolds.

We will henceforth assume that $\Sigma$ is a set of maximal stable proper submanifolds, and prove that there exist $\eps_2$ and $c$ for which \eqref{eq:bound-for-stable-ess-vol} holds. We may assume that $\eps_1 < \eps$.

\medskip
Let $W \isom W_1 \times \R^d$ be the Clifford splitting of $W$ with respect to $\Gamma$.
Suppose $Y$ is a maximal stable submanifold, and $A_Y$ is the set of stable elements such that $\min d_\alpha < \eps$ for every $\alpha \in A_Y$, and $\bigcap_{\alpha \in A_Y} \Min(\alpha) = Y$. By maximality, $\Min \alpha = Y$ for every $\alpha \in A_Y$.

By Lemma \ref{lem:stable-clifford-commute}, every element in $A_Y$ commutes with the Clifford transformations in $\Gamma$. Hence, every Clifford isometry of $W$ keeps $Y$ invariant, and therefore restricts to a Clifford isometry of $Y$. Therefore, the Clifford splitting of $Y$ (with respect to $\Gamma_Y$) can be made compatible with that of $W$\!; it can be written as $Y = Y_1 \times \R^k \times \R^d$, with $Y_1 \times \R^k \subset W_1$.
Since we are only interested in the projections of isometries to $W_1$ and to $Y_1$, we may simply assume $d=0$.

Every $\beta \in \Gamma_Y$ can be written in the form $\beta = (\beta_1, \beta_2)$ with $\beta_1 \in \Isom(Y_1)$ and $\beta_2 \in \Isom(\R^k)$. Denote by $p_1$ the projection of $\Gamma_Y$ to $\Isom(Y_1)$. Recall that $p_1(\Gamma_Y)$  acts properly on $Y_1$.

We will now describe a procedure that associates to a point in the $\eps_1$-thick part of $Y_1 / p_1 (\Gamma_{Y})$ a point in the $\eps_2$-thick part of $W / \Gamma$ ($\eps_2$ will be defined in this process).

Let $y' \in Y_1$ be a point with $d_{p_1(\Gamma_Y)} (y') \geq \eps_1$. Let $y = (y', 0) \in Y_1 \times \R^k = Y$.
Let $c : [0,\infty) \to W$ be a geodesic ray with $c(0) = y$ and $c'(0) \perp Y$. For every $\alpha \in A_Y$, $\alpha$ keeps $Y$ invariant, and $d_\alpha \circ c$ has a unique minimum at $0$. Hence, by convexity $d_\alpha \circ c$ is monotone increasing and unbounded. Fix $z = c(t_0)$, $t_0 > 0$ such that $d_\alpha(z) \geq \eps / 2$ for all $\alpha \in A_Y$, and $d_\alpha(z) = \eps/2$ for some $\alpha \in A_Y$. We henceforth fix $\alpha$ to be the latter.

We proceed to find a constant $\eps_4$ for which $\Gamma_{\eps_4}(z)$ is finite and bounded. Fix $\eps_3 = \eps_1 / 2 m M$.
Suppose that $d_\beta(z) < \eps_3$ for some $\beta \in \Gamma$. By Lemma \ref{lem:stablizing}, $\beta^j$ is $m$-stable for some $j < M$. Since $d_{\beta^j}(y) < \eps$, $\beta^j$ and $\alpha$ commute. It follows that $\beta^j$ keeps $Y = \Min(\alpha)$ invariant. Since $y$ is the projection of $z$ to $Y$, $d_{\beta^j}(y) \leq d_{\beta^j}(z) \leq j d_\beta(z) < M \eps_3 < \eps_1$.

Let us turn our attention to the projections of $\beta^j$. On one hand, $d_{\beta_1^j}(y') < \eps_1$, so by our hypotheses $\beta_1^j$ must fix $Y_1$ pointwise. Therefore, $\beta^j$ is in the kernel of the projection $\Gamma \to \Gamma_2$. Recall that the action of this kernel on $Y_2$ is proper and cocompact. Hence, by the Bieberbach theorem \cite{thurston-book}*{\S 4}, $\beta_2^{jk}$ is either trivial or a translation for some $k \leq m$. Therefore, $\beta^{jk}$ is either trivial, or acts as a Clifford translation on $Y$. The latter is impossible; indeed, it would imply that $\Min(\beta^{jk}) \supseteq Y$, and by maximality of $Y$, $\Min(\beta^{jk})=Y$ (recall that we are assuming there are no Clifford translations on $W$). But this would imply that $\beta^{jk} \in A_Y$ and contradicts the choice of $z$, because $d_{\beta_{jk}}(z) < m M \eps_3 < \eps / 2$. Thus, the order of $\beta$ is bounded by $m M$.

Now let $\eps_4 = \eps_3 / 2m$. Let $N$ be the normal nilpotent subgroup of $\Gamma_{\eps_4}(z)$. Then $N$ is generated by elements that translate $z$ by less than $2m \eps_4 = \eps_3$. It follows that $N$ is generated by elliptic elements whose order is bounded by $m M$. Therefore, $N$ is abelian, and its exponent is bounded by $(m M)!$. Since $N$ is an abelian subgroup of $\SO(n)$, it can be generated by a set of $n$ elements, hence $\abs{\Gamma_{\eps_4}(z)} \leq m (m M!)^n$.

Now, by Lemma \ref{lem:inj-point-near-small-group} there is a positive constant $\eps_2 < \eps_4$ such that if $\abs{\Gamma_{\eps_4}(z)} \leq m (m M!)^n$ then there exists a point $w \in W$ with $d(z,w) < \eps_4 / 4$ such that $d_\Gamma(w) > \eps_2$. The point $w$ is the point associated to $y'$. Note that
\begin{equation}\label{eq:d-alpha-w}
 d_\alpha(w) \leq d_\alpha(z) + 2 \cdot \frac{\eps_4}{4} < \eps.
\end{equation}

\medskip
Now let $\Sigma = \{ Y^1, \ldots, Y^s \}$ be a set of maximal stable proper submanifolds of $W$. For each $i$, choose a maximal set of points $\{y'_{i,1}, \ldots, y'_{i,t_i} \}$ in $Y^i$ that projects to an $\eps_1$-discrete set in the $\eps_1$-thick part of $Y^i_1 / p_1(\Gamma_{Y^i})$.
To each point $y'_{i,j}$ we attach a point $w_{i,j} \in W$ by the procedure described above. We have seen that these points project to the $\eps_2$-thick part of $W / \Gamma$. Let us denote the projection of $w_{i,j}$ by $\bar w_{i,j}$. The set of $\bar w_{i,j}$'s is not necessarily $\eps_2$-discrete. However, we claim that for every point $\bar w_{i,j}$, there are at most $2^n - 1$ points $\bar w_{k,l}$ for which $d(\bar w_{i,j}, \bar w_{k,l}) < \eps_2$. If this is true, we can find in $\{ \bar w_{i,j} \}$ a subset of size $\lfloor \abs{\{ \bar w_{i,j} \}} / 2^n \rfloor$ which is $\eps_2$-discrete. Using the volume estimates of Proposition \ref{prop:discrete-vol-est}, this will complete the proof. We turn to prove this final claim.

First, we show that if that if $d( \bar w_{i,j}, \bar w_{i,k} ) < 2 \eps_2$ then $j=k$. Indeed, if this inequality holds, we may assume the points are chosen such that $d(w_{i,j},w_{i,k}) < 2 \eps_2$. By the construction, it follows that
\begin{equation*}
  d(y'_{i,1},y'_{i,2}) \leq d(y_{i,1},y_{i,2}) < 2 \eps_2 + 2 \cdot \frac{\eps_4}{4} < \eps_1,
\end{equation*}
which is only possible if $j=k$.

Next, suppose there is a point $\bar w_{i,j}$ for which there are $2^n$ other points of distance $< \eps_2$. Then each two such points are at distance $< 2\eps_2$. By the preceding paragraph, the points correspond to different submanifolds, and we may thus renumber the indices so that $d(\bar w_{1,1}, \bar w_{i,1}) < \eps_2$ for $i=2,\dots,2^n + 1$. Also, we may assume that $d(w_{1,1}, w_{i,1}) < \eps_2$.

By our construction and \eqref{eq:d-alpha-w}, there are stable elements $\alpha_i$ ($1 \leq i \leq 2^n+1$) such that $d_{\alpha_i}(w_{i,1}) < \eps$. Hence, by their stability, these elements commute, and the submanifolds $Y_i = \Min(\alpha_i)$ have a non-empty intersection. Moreover, at the point of intersection, the submanifolds intersect orthogonally (see proof in \cite{bgs-book} for additional details). There is a one-to-one correspondence between these subspaces and their tangent space at the point of intersection, and the tangent subspaces are orthogonal, as well. Hence, there is an orthogonal basis of the tangent space, such that the tangent space to each of the $Y_i$'s is spanned by vectors of this basis. Hence, there are at most $2^n$ such spaces (= the number of subsets of the basis). This contradicts our assumption that $Y_1,\ldots,Y_{n+1}$ are distinct subspaces.
\end{proof}

\section{Proof of Theorems \ref{thm:finite-subgroups} and \ref{thm:finite-subgroups-high-rank}}\label{sec:proofs}

First, we make a reduction of Theorem \ref{thm:finite-subgroups} to the case $G$ has trivial center. Suppose the theorem holds for groups without center, and let $G$ be a connected semisimple Lie group with finite center $Z$. Denote by $\pi$ the projection of $G$ to the adjoint group $G' = G / Z$.

Let $\Gamma < G$ be a lattice. Since $Z$ is finite, every maximal finite subgroup in $\Gamma$ contains $\Gamma \cap Z$. It is easy to check that this implies that $f(\Gamma) = f(\pi(\Gamma))$. Therefore,
\begin{equation*}
 f(\Gamma) = f(\pi(\Gamma)) \leq c \cdot \vol(G' / \pi(\Gamma)) \leq c \cdot \vol(G / \Gamma),
\end{equation*}
where $c = c(G')$.

Assume now that $G$ has trivial center. Let $K$ be a maximal compact subgroup. The associated symmetric space $X = K \backslash G$ is of non-compact type, and we may assume the metric is normalized to meet the curvature bounds. There exists a constant $D=D(G)$ such that $\vol(X / \Gamma) = D \cdot \vol(G / \Gamma)$. Since $X$ does not have Euclidean factors, $\ncv_{\geq \mu}(X/\Gamma) \leq \vol(X/\Gamma)$. Therefore, Theorem \ref{thm:finite-subgroups} follows directly from the following.

\begin{thm}\label{thm:finite-subgroups-geometric}
  Let $X$ be a global symmetric space of non-positive curvature. For a group $\Gamma$ acting properly by isometries on $X$, denote by $f(\Gamma)$ the number of conjugacy classes of maximal finite subgroups of $\Gamma$. Then there exist constants $\mu = \mu(X)$ and $c = c(X)$ such that $f(\Gamma) \leq c \cdot \ncv_{\geq \mu}(X/\Gamma)$.
\end{thm}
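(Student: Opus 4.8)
The plan is to prove this by induction on $d = \dim X$, choosing all constants at each step uniformly over the (up to isometry and rescaling, finitely many) symmetric spaces of dimension $< d$. The base case $d = 0$ is trivial, since a proper action on a point is by a finite group, so $f(\Gamma) = 1 = \ncv_{\geq\mu}(X/\Gamma)$. Throughout I use the $\Gamma$-equivariant correspondence $H \mapsto S_H := \Fix(H)$ between maximal finite subgroups of $\Gamma$ and their minimal fixed submanifolds, which is injective on conjugacy classes; so it suffices to count these submanifolds up to conjugacy. We may assume $\ncv_{\geq\mu}(X/\Gamma) < \infty$, lest there be nothing to prove, and hence that the hypothesis of Proposition \ref{prop:mini-km} holds; and, conjugating $\Gamma$ by a homothety of the Euclidean factor of its Clifford splitting (which alters neither $f(\Gamma)$ nor $\ncv_{\geq\mu}(X/\Gamma)$), we may assume $\Gamma$ has no Clifford translation of displacement $< \eps$. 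Fix $\delta < \eps/M$, with $M = M(X)$ as in Lemma \ref{lem:stablizing}, and a constant $k_0 = k_0(X)$ bounding the order of any finite subgroup of $\Isom(X)$ all of whose elements have order $\leq M$ --- such a bound exists because such a group has an abelian normal subgroup of index $\leq m$ whose exponent divides $\operatorname{lcm}(1,\dots,M)$ and whose $p$-ranks are bounded in terms of $\dim X$.

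Call a maximal finite subgroup $H$ \emph{inductive} if it contains an element of order $> M$, or if $S_H \subseteq \Min(\gamma)$ for some hyperbolic $\gamma \in \Gamma$ with $\min d_\gamma \leq \delta$. In the first case a suitable power of the large-order element, made stable via Lemma \ref{lem:stablizing}, is a nontrivial \emph{elliptic} isometry $\alpha$ with $\Fix(\alpha) \supseteq S_H$; in the second, a stable power of $\gamma$ is a \emph{non-Clifford hyperbolic} isometry $\alpha$ with $\Min(\alpha) \supseteq S_H$ and $\min d_\alpha < \eps$ (non-Clifford, because short Clifford translations were excluded). With $W(H)$ the set of all stable $\alpha \in \Gamma$ with $\min d_\alpha < \eps$ and $\Min(\alpha) \supseteq S_H$, put
\[
  Y_H \ :=\ \bigcap_{h \in H}\ \bigcap_{\alpha \in W(H)} h\,\Min(\alpha).
\]
This equals $\bigcap \Min(\beta)$ over the stable semisimple family $\{\, h\alpha h^{-1} : h \in H,\ \alpha \in W(H) \,\}$ and contains $S_H$, hence is a non-empty stable singular submanifold; it is $H$-invariant since $W(H)$ is $H$-conjugation-invariant; and it is \emph{proper}, being contained in $\Min(\alpha) \subsetneq X$ for any non-Clifford $\alpha \in W(H)$. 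Thus $H$ is a maximal finite subgroup of $\Gamma_{Y_H}$ whose fixed locus inside the strictly lower-dimensional $Y_H$ is again $S_H$, and the assignment $H \mapsto Y_H$ is $\Gamma$-equivariant.

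Grouping the inductive conjugacy classes by the class of $Y_H$, the fibre over $[Y]$ has size at most $f(\Gamma_Y)$, because distinct inductive classes with $Y_H = Y$ give distinct conjugacy classes of maximal finite subgroups of $\Gamma_Y$. Using the inductive hypothesis --- uniform constants $c_1, \mu_1$ with $f(\Gamma_Y) \leq c_1 \ncv_{\geq\mu_1}(Y/\Gamma_Y)$ for $\dim Y < d$ --- and then Proposition \ref{prop:inductive-vol-bound} with $\eps_1 = \mu_1$, applied to the set $\Sigma$ of the resulting non-conjugate proper stable singular submanifolds of $X$, one obtains
\[
  \#\{\, [H] : H \text{ inductive} \,\}\ \leq\ \sum_{[Y] \in \Sigma} f(\Gamma_Y)\ \leq\ c_1 \sum_{[Y] \in \Sigma} \ncv_{\geq\mu_1}(Y/\Gamma_Y)\ \leq\ c_1 c_2 \, \ncv_{\geq\eps_2}(X/\Gamma).
\]
If $H$ is not inductive, then every element of $H$ has order $\leq M$, so $|H| \leq k_0$; moreover the hyperbolic alternative of Proposition \ref{prop:mini-km} is precisely the failed second inductivity condition, so that proposition supplies a point $x_H \in S_H$ with $\Gamma_{\delta/2m}(x_H) = H$. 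A comparison of displacements shows that if the $\Gamma$-orbits of $x_H$ and $x_{H'}$ come within a fixed multiple of $\delta$ of each other then $[H] = [H']$, so the images $\bar x_H \in X/\Gamma$ are uniformly separated; and since $|H| \leq k_0$, Lemma \ref{lem:inj-point-near-small-group} places a uniformly thick point within bounded distance of each $x_H$. A packing estimate (Proposition \ref{prop:discrete-vol-est}), carried out in $X_1/\Gamma_1$ (the non-Euclidean factor of the Clifford splitting) exactly as in the proof of Proposition \ref{prop:inductive-vol-bound}, then bounds the number of these classes by $c_3 \, \ncv_{\geq\mu_3}(X/\Gamma)$. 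Adding the two bounds yields $f(\Gamma) \leq c \cdot \ncv_{\geq\mu}(X/\Gamma)$ with $\mu = \min(\eps_2, \mu_3)$, closing the induction.

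The hard part, I expect, is the treatment of \emph{unboundedly large} finite subgroups: for these a packing argument is unavailable, and one must instead recognize $S_H$ inside a proper totally geodesic submanifold invariant under all of $H$ on which $H$ is still maximal finite. This is exactly what forces the definition of $Y_H$ as the double intersection $\bigcap_h \bigcap_\alpha h\Min(\alpha)$ rather than a single minimal set, and the point needing care is that this $Y_H$ is still a \emph{stable singular} submanifold, so that Proposition \ref{prop:inductive-vol-bound} applies to it. A secondary technicality --- already flagged --- is the Clifford/Euclidean bookkeeping in the last step: the separation and packing for the non-inductive classes, like the recursion for the inductive ones, must be read off the non-Euclidean factor, as in Proposition \ref{prop:inductive-vol-bound}; one also uses that $\ncv_{\geq\mu}(X/\Gamma)$ is bounded below by a positive constant, which disposes of torsion-free $\Gamma$ and of the boundedly many finite subgroups contained in the Clifford lattice.
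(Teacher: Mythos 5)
Your overall architecture matches the paper's: induction on $\dim X$ with constants chosen uniformly across the finitely many symmetric spaces of each dimension, a dichotomy between maximal finite subgroups that ``see'' a short stable isometry (your \emph{inductive} ones, where you intersect $\Min$-sets over $H$-conjugates of short stable elements, apply the inductive hypothesis to $\Gamma_{Y_H}$, and sum via Proposition~\ref{prop:inductive-vol-bound}) and those that don't (where Proposition~\ref{prop:mini-km} and Lemma~\ref{lem:inj-point-near-small-group} give a bounded order and a thick point for packing via Proposition~\ref{prop:discrete-vol-est}). This is the same split the paper makes in Steps~2--3, and your $Y_H$ construction is precisely the set $\bigcap_{\gamma\in H}\Min(\gamma\alpha^j\gamma^{-1})$ appearing there. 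So far so good.

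The divergence --- and the gap --- is in the Euclidean/Clifford bookkeeping. You replace the paper's Step~1 by conjugating with a homothety so that all Clifford translations have displacement $\geq\eps$. That trick is legitimate (it preserves $f$ and $\ncv$) and it does make the inductive half clean: no short stable element is Clifford, so $Y_H$ is proper. But for the non-inductive classes your packing happens in $X_1/\Gamma_1$, and two non-conjugate maximal finite subgroups that differ only in the Euclidean direction project to the \emph{same} thick locus there, so your separation argument cannot tell them apart. A concrete failure: take $X=\H^2\times\R$ and $\Gamma=\Gamma_0\times(\Z\rtimes\Z/2)$ with $K<\Gamma_0$ a $\Z/2$ whose fixed point is $\eps$-thick in $\H^2/\Gamma_0$. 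Then $K\times\langle r\rangle$ and $K\times\langle tr\rangle$ are non-conjugate maximal finite subgroups, both non-inductive after the homothety (all orders are $\leq 2\leq M$, and there is no short hyperbolic through the fixed point), and they map to the same point of $X_1/\Gamma_1=\H^2/\Gamma_0$; yet they do not land in $\ker p_1$, so your closing sentence about ``finite subgroups contained in the Clifford lattice'' doesn't reach them. The paper closes this hole in Step~1: for each maximal finite $H_1<\Gamma_1$ the preimage $p_1^{-1}(H_1)$ pushes forward to a crystallographic group $\Gamma_2(H_1)<\Isom(\R^k)$, whose number of conjugacy classes of maximal finite subgroups is bounded by a constant $D(\dim X)$ (Auslander), giving $f(\Gamma)\leq D\cdot f(\Gamma_1)$ and reducing to the Clifford-free case. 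Your appeal to ``as in Proposition~\ref{prop:inductive-vol-bound}'' does not supply this multiplicity bound: the $d=0$ reduction there is legitimate because both sides of that inequality genuinely depend only on the $X_1$-projections, whereas $f(\Gamma)$ does not. So the proof as written is incomplete; you need the crystallographic multiplicity bound (or an equivalent) before the non-inductive packing.
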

\begin{proof}
  Throughout the proof, $\eps$ and $m$ are constants of the Margulis Lemma, and $M$ is the constant of Lemma \ref{lem:stablizing}. We choose all constants so that these lemmas hold for any symmetric space with dimension $\leq \dim(X)$.

  We will prove by induction that for every $d \leq \dim(X)$ there are constants $\mu_d, c_d$ such that if $Y$ a connected complete totally geodesic submanifold of $X$ of dimension $\leq d$, and $\Gamma$ is a group acting properly on $Y$ then $f(\Gamma) \leq c_d \cdot \ncv_{\geq \mu_d}(Y / \Gamma)$.

  If $\dim(Y)=0$, and $\Gamma$ acts properly on $Y$, then $\Gamma$ is finite, and therefore $f(\Gamma)=1$. We may fix $c_0=1$ and $\mu_0$ to be any positive number.

  Suppose now that $\mu_i$ exist for $i=0,\ldots,d-1$, and let us prove that $\mu_d$ exists. Let $Y$ be a connected complete totally geodesic submanifold of $X$ of dimension $d$, and let $\Gamma$ be a group of isometries acting properly on $Y$.

  \noindent {\bf Step 1.}
  Let us first assume that $\Gamma$ contains Clifford isometries. Let $T \leq \Gamma$ be the subgroup of Clifford isometries, and let $Y = Y_1 \times Y_2$, $Y_2 \isom \R^k$ ($k \geq 1$) be the Clifford splitting with respect to $\Gamma$.
  Denote by $\Gamma_i$ the projection of $\Gamma$ to $\Isom(Y_i)$, $i=1,2$. Recall that $\Gamma_1$ acts properly on $Y_1$, and that the subgroup $T$ is contained in the kernel of the projection $\Gamma \to \Gamma_1$, and acts cocompactly on $Y_2$.

  Our goal is to show that a maximal finite subgroup of $\Gamma$ has bounded number of possible projections to $\Gamma_1$ and $\Gamma_2$, up to conjugation in $\Gamma$. Then we will show this yields the appropriate bound on the number of conjugacy classes of maximal finite subgroups in $\Gamma$.

  For a finite subgroup $H_1$ in $\Gamma_1$, define $\Gamma(H_1) = p_1^{-1}(H_1)$ and $\Gamma_2(H_1) = p_2(\Gamma(H_1))$. Since the restriction of $p_2$ to $\Gamma(H_1)$ has finite kernel, $\Gamma_2(H_1)$ acts properly on $Y_2$. Moreover, $T$ is contained in $\Gamma(H_1)$ and injects into $\Gamma_2(H_1)$, and therefore $\Gamma_2(H_1)$ acts cocompactly on $Y_2$.

  We claim that $f(\Gamma_2(H_1))$ is bounded by a constant $D$ that depends only on $\dim(X)$. This follows immediately from the fact that, up to isometry, there are finitely many crystallographic groups of every dimension \cite{auslander-crystallographic}.

  Now let $\{ H^{(1)}_1, \ldots, H^{(1)}_r \}$ be a maximal collection of non-conjugate maximal finite subgroups in $\Gamma_1$. For every $1 \leq i \leq r$, let $\{ H^{(2)}_{i,1}, \ldots, H^{(2)}_{i,s_i} \}$ be a maximal collection of non-conjugate maximal finite subgroups of $\Gamma_2(H^{(1)}_i)$.
  By the induction hypothesis, $r = f(\Gamma_1) \leq \ncv_{\geq \mu_{d-1}}(Y_1 / \Gamma_1) = \ncv_{\geq \mu_{d-1}}(Y / \Gamma)$ (the latter equation being the definition of $\ncv$). Also, $s_i \leq D$ for every $i$.

  Let $H < \Gamma$ be a maximal finite subgroup. By conjugating in $\Gamma$ we may assume that the projection of $H$ to $\Gamma_1$ is contained in $H^{(1)}_i$ for some $i$. Furthermore, by conjugating by an element of $\Gamma(H^{(1)}_i)$ we may assume that $H$ projects into $H^{(2)}_{i,j}$ for some $1 \leq j \leq s_i$. Note that this conjugation does not change the projection of $H$ to $\Gamma_1$. Our claim will be proved if we show that there is a single maximal finite subgroup with such projections.

  To this end, let $y_1 \in Y_1, y_2 \in Y_2$ be points fixed by $H^{(1)}_i,H^{(2)}_{i,j}$, respectively. If $K_1,K_2$ are two maximal finite subgroups of $\Gamma$ which both project to $H^{(1)}_i, H^{(2)}_{i,j}$, then they both fix $(y_1,y_2)$. But by maximality, they are both equal to the stabilizer of this point in $\Gamma$.

  We conclude that
  \begin{equation}\label{eq:step1}
    f(\Gamma) \leq D \ncv_{\geq \mu_{d-1}}(Y / \Gamma).
  \end{equation}

  \smallskip
  \noindent {\bf Step 2.}
  We henceforth assume $\Gamma$ does not contain Clifford isometries.

  Let $\Sigma$ be a maximal set of non-conjugate stable singular proper submanifolds of $Y$. For $Z \in \Sigma$, denote by $\Gamma_Z$ the stabilizer of $Z$ in $\Gamma$, by $\Gamma_Z^1$ the fixator (pointwise stabilizer) of $Z$ in $\Gamma$, and by $f_Z$ the number of non-conjugate maximal finite subgroups of $\Gamma$ that are contained in $\Gamma_Z$.

  Observe that if $H$ is a maximal finite subgroup of $\Gamma$ that keeps $Z$ invariant, then it fixes some point $z \in Z$. By maximality, $H = \Gamma_z$, and therefore $H$ contains $\Gamma_Z^1$.
  Therefore, maximal finite subgroups that are contained in $\Gamma_Z$ are in a natural one-to-one correspondence with maximal finite subgroups of $\Gamma_Z / \Gamma_Z^1$. Moreover, this correspondence respects conjugation by elements of $\Gamma_Z$.

  It follows that $f_Z \leq f(\Gamma_Z / \Gamma_Z^1) \leq \ncv_{\geq \mu_{d-1}}(Z / \Gamma_Z)$, by the induction hypothesis.
  By Proposition \ref{prop:inductive-vol-bound}, we have
  \begin{equation}\label{eq:step2}
    \sum_{Z \in \Sigma} f_Z \leq \sum_{Z \in \Sigma} \ncv_{\geq \mu_{d-1}}(Z / \Gamma_Z) \leq E_1 \cdot \ncv_{\geq \delta_1}(Z / \Gamma_Z),
  \end{equation}
  with $\delta_1 = \delta_1(\mu_{d-1})$, $E_1 = E_1(\mu_{d-1})$.

  It remains to bound the number of non-conjugate maximal finite subgroups that do not stabilize any stable singular proper submanifold.

  \smallskip
  \noindent {\bf Step 3.}
  Let $\calH$ be a set of non-conjugate maximal finite subgroup of $\Gamma$ that do not stabilize any stable singular proper submanifold.
  Let $H \in \calH$, and let $y \in \Fix(H)$. By maximality, $H = \Gamma_y$.

  We claim that there is no hyperbolic element $\alpha \in \Gamma$ with $\Min(\alpha) \supseteq \Fix(H)$ and $d_\alpha(y) < \eps / M$ (recall, $M$ is the constant of Lemma \ref{lem:stablizing}). Suppose $\alpha$ is such an element. Then for some $j \leq M$, $\alpha^j$ is stable, and $d_{\alpha^j}(y) < \eps$. The elements in $\{ \gamma \alpha^j \gamma^{-1} \; | \; \gamma \in H\}$ are all stable and contained in $\Gamma_\eps(y)$, hence they commute.
  It follows that $Z = \bigcap_{\gamma \in H} \Min(\gamma \alpha^j \gamma^{-1})$ is non-empty, and it is a $H$-invariant stable singular submanifold. By our assumption, there are no Clifford isometries in $\Gamma$, so $Z \neq Y$. But this contradicts our assumption.

  Now, by Proposition \ref{prop:mini-km} we may replace $y$ with a point $y \in \Fix(H)$ such that $\Gamma_{\eps / 2mM}(y) = H$.

  Following the same reasoning, every element $\alpha \in H$ has order bounded by $M$; otherwise, $\bigcap_{\gamma \in H} \Min(\gamma \alpha^j \gamma^{-1})$ would be a non-empty $H$-invariant stable singular proper submanifold, for some $j \leq M$. By identifying $H$ with a subgroup of $\SO(n)$, we see that its normal abelian subgroup is generated by at most $n$ elements. Since the index of this subgroup is a most $m$, we deduce that $\abs{H} \leq M^n m$.

  By Lemma \ref{lem:inj-point-near-small-group} there is a positive constant $\delta_2 < \eps/8$ such that if \begin{equation*}
   \abs{\Gamma_{\eps / 2 m M}(y)} \leq M^n m
  \end{equation*}
  then there is a point $z \in Y$ with $d(x,y) < \eps / 8$ such that $d_\Gamma(y) > \delta_2$. We denote this point $z_H$.

  We claim that the set of points $\{ z_H : H \in \calH \}$ projects to a $\delta_2$-discrete set in the $\delta_2$-thick part of $Y / \Gamma$. By the choice of $z_H$, the projection is indeed in the $\delta_2$-thick part. It remains to show discreteness. To this end, let  $H_1,H_2 \in \calH$, let $y_1, y_2$ be the corresponding fixed points, and let $z_1,z_2$ be the points obtained for each group. Suppose, in contradiction, that the projections of $z_1,z_2$ are not $\delta_2$-separated. By replacing $H_2$ by a conjugate, we may assume  $d(z_1,z_2) < \delta_2$. This implies that
  \begin{equation*}
    d(y_1,y_2) \leq \delta_2 + 2 \cdot \eps / 8 < \eps/2.
  \end{equation*}
  Consequentially, $H_1 = \Gamma_{y_1} \leq \Gamma_\eps(y_2) = H_2$, hence $H_1 = H_2$, a contradiction.
  By the volume estimate of \ref{prop:discrete-vol-est},
  \begin{equation}\label{eq:step3}
    \abs{\calH} \leq E_2 \cdot \vol_{\geq \delta_2/2}(Y / \Gamma) = E_2 \cdot \ncv_{\geq \delta_2/2}(Y / \Gamma),
  \end{equation}
  where $E_2$ is a constant depending on $X$.

  \medskip \noindent
  {\bf Step 4.}
  Take $c_d = \max(D,E_1+E_2)$ and $\mu_d = \min(\delta_1,\delta_2/2,\mu_{d-1})$. It follows easily from \eqref{eq:step1}, \eqref{eq:step2}, and \eqref{eq:step3} that
  \begin{equation*}
    f(\Gamma) \leq c_d \cdot \vol^{nc}_{\geq \mu_d} (Y / \Gamma).
  \end{equation*}
\end{proof}

\begin{proof}[Proof of Theorem \ref{thm:finite-subgroups-high-rank}]
Assume $G$ has $\R\!\hrank$ at least 2, and Kazhdan's property (T). In this case, we have the following result
\begin{thm}
Let $\Gamma_n$ be a sequence of irreducible lattices in $G$ such that $\vol(G / \Gamma_n) \to \infty$. Then for every $R > 0$,
\begin{equation*}\label{eq:thick-vol-sublinear}
  \lim_n \frac {\vol_{\leq R} (X / \Gamma_n)} {\vol(X/\Gamma_n)} = 0.
\end{equation*}
\end{thm}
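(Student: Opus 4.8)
The plan is to package the lattices as invariant random subgroups of $G$ and to run a Stuck--Zimmer rigidity argument, for which the hypotheses $\R\!\hrank(G)\ge 2$ and property (T) are exactly what is needed. Fix $o\in X=K\backslash G$, let $\overline B_r=\{g\in G:d(go,o)\le r\}$ (a compact set, invariant under conjugation by $K=\Isom(X)_o$), and let $\mathrm{Sub}(G)$ be the compact metrizable Chabauty space of closed subgroups of $G$. To $\Gamma_n$ attach the invariant random subgroup $\nu_n$, the push-forward of the normalized $G$-invariant probability measure on $G/\Gamma_n$ under $g\Gamma_n\mapsto g\Gamma_n g^{-1}$. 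Using $d_\Gamma(Kg)=\inf\{d(\delta o,o):\delta\in g\Gamma_n g^{-1}\setminus\{1\}\}$ and the $K$-conjugation invariance of $\overline B_R$ one gets $\vol_{\le R}(X/\Gamma_n)/\vol(X/\Gamma_n)=\nu_n\bigl(\{\Delta:\Delta\cap\overline B_R\ne\{1\}\}\bigr)$. Assume for contradiction that for some $R$ this does not tend to $0$; passing to a subsequence $\nu_n\bigl(\{\Delta:\Delta\cap\overline B_R\ne\{1\}\}\bigr)\ge\eps_0>0$, and by compactness a further subsequence has $\nu_n\to\nu$ weak-$*$, with $\nu$ again an invariant random subgroup of $G$.

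Next I would show $\nu=\delta_{\{1\}}$. Decompose $\nu$ into ergodic components; by the Stuck--Zimmer theorem (whose hypotheses hold since $\R\!\hrank(G)\ge2$ and $G$ has property (T)) each component is $\delta_N$ for some $N\trianglelefteq G$, or the invariant random subgroup attached to a lattice of $G$. The lattice components are excluded by $\vol(G/\Gamma_n)\to\infty$: by local rigidity of lattices together with Wang's finiteness theorem (both available since $\R\!\hrank(G)\ge2$), for each $V$ the set of lattices of covolume at most $V$ is open in $\mathrm{Sub}(G)$; it carries zero $\nu_n$-mass once $\vol(G/\Gamma_n)>V$, so weak-$*$ semicontinuity gives it zero $\nu$-mass, and $V$ is arbitrary. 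The components $\delta_N$ with $\{1\}\ne N$ are excluded by irreducibility: we may assume $G$ has trivial centre, hence is the direct product of its simple factors with $N$ a subproduct, and irreducibility makes the projection of $\Gamma_n$ to the complementary factor dense, so no Chabauty accumulation point of conjugates of $\Gamma_n$ is supported on $N$ (and $N=G$ is excluded too, since conjugates of a discrete subgroup stay Chabauty-bounded-away from $G$). Hence $\nu=\delta_{\{1\}}$.

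Finally one transfers this back to the volume ratio, and here lies the real work. Split $\{\Delta:\Delta\cap\overline B_R\ne\{1\}\}$ into the macroscopic part $\{\Delta:\Delta\cap(\overline B_R\setminus B_\eta(1))\ne\emptyset\}$, where $\eta$ is a fixed Zassenhaus radius, and its complementary remainder. The macroscopic part is closed and avoids $\{1\}$, so its $\nu_n$-mass tends to $\delta_{\{1\}}$ of it, namely $0$; and a hyperbolic element of displacement $\le R$, or a bounded-order elliptic element whose fixed set meets $\overline B_R$, has a power landing in $\overline B_{R'}\setminus B_\eta(1)$, so it is already counted here. What survives is the \emph{cuspidal} contribution --- subgroups whose only $\le R$-short elements are unipotent --- and this is the main obstacle: conjugates of a horospherical unipotent subgroup Chabauty-degenerate to $\{1\}$, so this mass is invisible to $\nu=\delta_{\{1\}}$ and must be controlled directly. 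Here I would invoke Margulis arithmeticity ($\R\!\hrank(G)\ge2$ makes every $\Gamma_n$ arithmetic) and bound the volume of the $R$-cuspidal region of $X/\Gamma_n$ by reduction theory --- equivalently, establish a non-escape-of-mass statement for $\{\nu_n\}$ toward the cusps, in the spirit of the non-divergence theorems for unipotent flows. With that estimate in hand $\nu_n\bigl(\{\Delta:\Delta\cap\overline B_R\ne\{1\}\}\bigr)\to 0$, contradicting the choice of $\eps_0$; the classification of the limiting random subgroup and the Chabauty bookkeeping are comparatively soft, whereas the cuspidal/non-escape step is where the higher-rank, arithmetic, and property-(T) input is genuinely consumed.
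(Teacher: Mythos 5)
First, a remark on provenance: the paper does not prove this statement at all --- it is quoted verbatim from Corollary 4.10 of the reference \cite{samurai-l2}, and your overall strategy (encode the $\Gamma_n$ as invariant random subgroups, classify the weak-* limit via Stuck--Zimmer, transfer back through the Chabauty topology) is exactly the strategy of that reference. The identification $\vol_{\leq R}(X/\Gamma_n)/\vol(X/\Gamma_n)=\nu_n\bigl(\{\Delta:\Delta\cap\overline B_R\ne\{1\}\}\bigr)$ and the reduction to showing $\nu=\delta_{\{1\}}$ are correct.

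The genuine problem is in your final paragraph. The ``cuspidal obstacle'' you single out as the real work rests on a false premise: conjugates of a discrete unipotent subgroup do \emph{not} Chabauty-degenerate to $\{1\}$. If $u_n\to 1$ are nontrivial unipotents, then $\langle u_n\rangle$ converges to the full one-parameter subgroup through the limiting direction (the same phenomenon as $\epsilon\Z\to\R$), so the unipotent thin part is perfectly visible to the limit IRS. More generally, fix a left-invariant metric on $G$ and let $\eta$ be small enough that no nontrivial closed subgroup is contained in the $\eta$-ball around $1$ (no small subgroups / Zassenhaus). If $1\ne g$ generates a discrete group, consecutive powers of $g$ are exactly $d(g,1)$ apart, and the closure of $\langle g\rangle$ must leave the $\eta$-ball; hence some power of $g$ lies in the compact annulus $C_\eta=\{h:\eta\le d(h,1)\le 2\eta\}$ whenever $d(g,1)<\eta$. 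Consequently $\{\Delta:\Delta\cap\overline B_R\ne\{1\}\}$ is contained in the closed set $\{\Delta:\Delta\cap C\ne\emptyset\}$ for a fixed compact $C$ (namely $(\overline B_R\cup \overline B_{2\eta}(1))\setminus B_\eta(1)$) with $1\notin C$; since $\delta_{\{1\}}$ gives this closed set measure zero, the portmanteau inequality already yields $\nu_n(\{\Delta:\Delta\cap\overline B_R\ne\{1\}\})\to 0$. No arithmeticity, reduction theory, or non-divergence estimate is needed, and the step you defer as the main obstacle is both unnecessary and, as proposed, not obviously executable uniformly in $n$. Conversely, the steps you treat as soft are where the actual care is required: openness of $\{\text{lattices of covolume}\le V\}$ in the Chabauty space is not immediate, and the reference instead uses property (T) via the Glasner--Weiss theorem to conclude that the limit of the ergodic measures $\nu_n$ is itself ergodic, then excludes the lattice alternative by semicontinuity of covolume; and in the semisimple, non-simple case one needs the Nevo--Stuck--Zimmer refinement together with an argument that irreducibility persists in the limit, which is where that hypothesis is genuinely consumed.
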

This is Corollary 4.10 of \cite{samurai-l2} (a version for simple groups appears as Corollary 2.5 of \cite{samurai-announce}).

We now reflect on the proof of theorem \ref{thm:finite-subgroups-geometric}. The basic technique of the proof is, essentially, to bound the size of certain sets by assigning to each element of the set a point in the $\delta$-thick part of $X / \Gamma$ (for some $\delta > 0$ which is smaller than $\eps$), and making sure these points are $\delta$-discrete. This is done in two places.
The first, is in step 2 of the proof, and relies on Proposition \ref{prop:inductive-vol-bound}. The second is done directly in step 3. (Note that we are restricting our attention to the proof to the last step of the induction, i.e. $d = \dim(X)$). If we look closer into the proofs of the Proposition and of step 3, we see that in both cases the points in $\delta$-thick part are contained in the $\frac{\eps}{2}$-thin part of $X / \Gamma$. In fact, this is crucial to the proof of $\delta$-discreteness. Therefore, we also have a bound
\begin{equation*}
f(\Gamma) \leq c' \cdot \vol_{\leq \eps}(X / \Gamma)
\end{equation*}
(of course, this bound does not lend itself to induction, and is therefore not highlighted in the proof of the theorem). The proof is completed by appealing to Theorem \ref{eq:thick-vol-sublinear}.
\end{proof}

\section{A construction in \texorpdfstring{$\SO(d,1)$}{SO(d,1)}}\label{sec:so}

\subsection{}
We first describe a general setting that assures that a group contains a sequence of subgroups of finite index for which the number of non-conjugate maximal finite subgroups grows linearly with index. Later, we will construct lattices in $\SO(d,1)^\circ$ which realize this setting.

\begin{prop}\label{prop:linear-growth}
  Let $\Phi$ be a countable group with a finite subgroup $H$. Assume there is an epimorphism $\varphi : \Phi \to \Z$ such that the normalizer of $H$ is contained in $\ker \varphi$.
  Let $\Phi_n$ be the kernel of the composition $$\Phi \xra{\varphi} \Z \to \Z / n \Z.$$
  Then there are $n$ non-conjugate subgroups of $\Phi_n$ that are all conjugate to $H$ in $\Phi$.
\end{prop}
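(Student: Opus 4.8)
The plan is to produce $n$ explicit conjugates of $H$ lying inside $\Phi_n$ and then verify that they fall into $n$ distinct $\Phi_n$-conjugacy classes. Since $\varphi$ is surjective, fix $t \in \Phi$ with $\varphi(t) = 1$ and put $H_i = t^i H t^{-i}$ for $i = 0, 1, \ldots, n-1$. First I would record two consequences of the hypothesis $N(H) \subseteq \ker\varphi$ (where $N(H)$ denotes the normalizer of $H$ in $\Phi$): because $H \subseteq N(H)$ and $\varphi$ maps into an abelian group, $\varphi$ vanishes on each $H_i$, so $H_i \subseteq \ker\varphi$; and $\ker\varphi = \varphi^{-1}(0) \subseteq \varphi^{-1}(n\Z) = \Phi_n$. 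Hence all $n$ subgroups $H_0, \ldots, H_{n-1}$ are contained in $\Phi_n$, and each is conjugate to $H$ in $\Phi$ by construction.

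It then remains to show $H_i$ and $H_j$ are not conjugate in $\Phi_n$ when $0 \le i < j \le n-1$. Suppose $g \in \Phi_n$ satisfies $g H_i g^{-1} = H_j$. Substituting $H_i = t^i H t^{-i}$ and $H_j = t^j H t^{-j}$ and conjugating on the left by $t^{-j}$, one finds that the element $s = t^{-j} g t^i$ normalizes $H$, so $s \in N(H) \subseteq \ker\varphi$. Applying $\varphi$ gives $0 = \varphi(s) = -j + \varphi(g) + i$, i.e.\ $\varphi(g) = j - i$; but $g \in \Phi_n$ forces $\varphi(g) \in n\Z$, while $0 < j - i < n$, a contradiction. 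Therefore $H_0, \ldots, H_{n-1}$ are $n$ pairwise non-conjugate subgroups of $\Phi_n$, all conjugate to $H$ in $\Phi$, as required.

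This is a short counting argument and there is no serious obstacle; the only point deserving care is that the hypothesis $N(H) \subseteq \ker\varphi$ gets used in two distinct ways --- to certify that the conjugates $t^i H t^{-i}$ already lie in $\ker\varphi \subseteq \Phi_n$, and to constrain any element of $\Phi_n$ that could conjugate one $H_i$ to another --- together with the elementary observation $\ker\varphi \subseteq \Phi_n$, which is what lets ``conjugate in $\Phi$'' information descend to the level of $\Phi_n$.
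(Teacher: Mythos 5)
Your proof is correct and follows essentially the same route as the paper: fix $t \in \varphi^{-1}(1)$, take $H_i = t^i H t^{-i}$, and show non-conjugacy in $\Phi_n$ by observing that a conjugating element $g$ would make $t^{-j} g t^i$ normalize $H$, forcing $\varphi(g) = j-i \equiv 0 \pmod n$. The only cosmetic difference is that the paper justifies $H_i \subseteq \ker\varphi$ by finiteness of $H_i$ (a finite subgroup of $\Z$ is trivial), whereas you derive it from $H \subseteq N(H) \subseteq \ker\varphi$ and normality of the kernel; both are valid.
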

\begin{proof}
Fix some $t \in \varphi^{-1}(1)$.
Consider the subgroups $H_i = t^i H t^{-i}$, $i \geq 0$. Since they are finite, they are all contained in $\ker \varphi$, hence in $\Phi_n$, for all $n$.

We claim that $H_0,\ldots,H_{n-1}$ are non-conjugate in $\Phi_n$. Indeed, if $H_i$,$H_j$ ($0\leq i,j < n$) are conjugate then $t^{-j} g t^i$ normalizes $H$ for some $g \in \Phi_n$. By our assumption on $H$ and the definition of $\Phi_n$, $\varphi(g) = j-i \equiv 0 \pmod n$, and therefore $i=j$.
\end{proof}

In our application, $\Phi$ will be a lattice. As the covolume of $\Phi_n$ is proportional to $n = [\Phi : \Phi_n]$, the number of non-conjugate finite subgroups constructed in the previous proposition grows linearly with the covolume of $\Phi_n$. We also remark that if we start with a \emph{maximal} finite subgroup $H$, the construction yields non-conjugate maximal subgroups.

\subsection{}
Now, we construct an appropriate lattice in $G = \SO(d,1)^\circ$, the connected component of $\SO(d,1)$.
The starting point of our construction is the result of Millson \cite{millson-betti} concerning the positivity of the first Betti number of certain congruence subgroups in standard arithmetic lattices of $G$.

Let $X = \{ (x_1,\ldots,x_{d+1}) \; | \; x_1^2 + \cdots + x_d^2 - x_{d+1}^2 = -1, x_{d+1} > 0 \}$
be the upper sheet of the hyperboloid, induced with a metric from the Minkowski space. This is a model for the $d$-dimensional real hyperbolic space. The linear action of $G$ on $\R^{d+1}$ induces an isometric action on $X$.

Let $q$ be a prime, and let $Q(X_1,X_2,\ldots,X_{d+1}) = X_1^2 + \cdots + X_d^2 - \sqrt q X_{d+1}^2$ be a quadratic form. Let $\calO$ be the ring of integers in $\Q[\sqrt q]$. Clearly, $\SO(Q, \R)$ is conjugate to $\SO(d,1)$, and we denote by $\Gamma$ the corresponding conjugate of $\SO(Q, \calO)$ in $\SO(d,1)$, intersected with $\SO(d,1)^\circ$. Then $\Gamma$ is a lattice in $G$. For an ideal $\p$ in $\calO$, let $\Gamma(\p) = \{ \gamma \in \Gamma \; | \; \gamma \equiv I \mod \p \}$. We will fix a certain $\p$ shortly, and denote $\Delta = \Gamma(\p)$.

Denote by $Y$ the hyperplane $\{ (x_1,\ldots,x_{d+1}) \in X \; | \; x_1 = 0 \}$.
Let $\Delta_Y$ the subgroup of isometries in $\Delta$ stabilizing $Y$ (in Millson's work, this is described as the group of isometries commuting with the reflection around $Y$). For almost all $\p$, $\Delta$ is torsion-free, and it is shown that for $\p$ deep enough, $Y / \Delta_Y$ embeds as a non-separating oriented hyperplane in $X / \Delta$. We fix such ideal $\p$. Note that $\Delta$ is normal in $\Gamma$.

As explained in \cite{millson-betti}, the homology class carried by $Y / \Delta_Y$ is non-trivial, and its Poincar\'e dual yields an epimorphism $\phi : \Delta \to \Z$.

Let $H < \Gamma$ be the group of isometries stabilizing $Y$ and fixing the point $(0,\ldots,0,1) \in Y$. Note that $H$ is not trivial; it contains, for example, the diagonal matrices with -1 in two of the first $d$ entries, and 1 on the rest. Moreover, by considering all the matrices of this form, we see that $(0,\ldots,0,1)$ is the \emph{unique} fixed point of $H$.

Since $H$ normalizes $\Delta$, it acts by isometries on $X / \Delta$. Furthermore, since $H$ stabilizes $Y$ it normalizes $\Delta_Y$ and stabilizes $Y / \Delta_Y$. It follows that $H$ fixes the homology class of $Y / \Delta_Y$ and its Poincar\'e dual. In other words, $\phi(h \gamma h^{-1}) = \phi(\gamma)$ for all $\gamma \in \Delta$ and $h \in H$. By the following lemma, whose proof is left to the reader, we can extend $\phi$ to $H \ltimes \Delta$.

\begin{lemma}
  Let $\Delta < \Gamma$ be a subgroup, and $H < \Gamma$ a finite subgroup contained in the normalizer of $\Delta$. A homomorphism $\phi : \Delta \to \Z$ can be extended to $H \Delta$ iff $\phi(h \gamma h^{-1}) = \phi(\gamma)$ for all $\gamma \in \Delta$ and $h \in H$.
\end{lemma}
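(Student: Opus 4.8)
The plan is to prove the two implications separately; the forward direction is immediate, and the reverse direction carries all the content.

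First, suppose $\phi$ admits an extension $\tilde\phi : H\Delta \to \Z$. Since $\Z$ is abelian, for any $h \in H$ and $\gamma \in \Delta$ we have $\tilde\phi(h\gamma h^{-1}) = \tilde\phi(h) + \tilde\phi(\gamma) + \tilde\phi(h^{-1}) = \tilde\phi(\gamma)$, and because $h\gamma h^{-1} \in \Delta$ both sides agree with the corresponding values of $\phi$. This gives the asserted $H$-invariance.

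For the converse I would first record the two structural facts that drive the argument: (i) $H\Delta$ is a subgroup of $\Gamma$, since $H$ normalizes $\Delta$, and every element of it can be written as $h\gamma$ with $h \in H$, $\gamma \in \Delta$; and (ii) any homomorphism from a finite group to $\Z$ is trivial (if $g^n = 1$ then $n\,\phi(g) = 0$), so in particular $\phi$ vanishes on the finite group $H \cap \Delta$, and any extension would have to vanish on all of $H$. Fact (ii) forces the only candidate formula, $\tilde\phi(h\gamma) := \phi(\gamma)$, and the first thing to check is that it is well defined. If $h_1\gamma_1 = h_2\gamma_2$, then $h_2^{-1}h_1 = \gamma_2\gamma_1^{-1}$ lies in $H \cap \Delta$, so writing $\gamma_2 = (h_2^{-1}h_1)\gamma_1$ and using fact (ii) gives $\phi(\gamma_2) = \phi(\gamma_1)$.

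It then remains to verify that $\tilde\phi$ is a homomorphism restricting to $\phi$ on $\Delta$. Given $h_1\gamma_1, h_2\gamma_2 \in H\Delta$, I would rewrite the product in normal form as $h_1\gamma_1 h_2 \gamma_2 = (h_1 h_2)\big((h_2^{-1}\gamma_1 h_2)\gamma_2\big)$, which is legitimate because $h_2^{-1}\gamma_1 h_2 \in \Delta$. Applying the definition of $\tilde\phi$ and then the hypothesis $\phi(h_2^{-1}\gamma_1 h_2) = \phi(\gamma_1)$ yields $\tilde\phi(h_1\gamma_1 h_2\gamma_2) = \phi(h_2^{-1}\gamma_1 h_2) + \phi(\gamma_2) = \phi(\gamma_1) + \phi(\gamma_2) = \tilde\phi(h_1\gamma_1) + \tilde\phi(h_2\gamma_2)$; and $\tilde\phi|_\Delta = \phi$ by taking $h = 1$. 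The hypothesis is used only in this last computation, and the sole mild subtlety — the "obstacle", such as it is — is the well-definedness of the candidate formula, which rests entirely on the torsion-freeness of $\Z$.
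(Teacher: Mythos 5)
Your proof is correct. The paper explicitly leaves the proof of this lemma to the reader, so there is no given argument to compare against; your approach is the natural one and surely what the author intended. You correctly identify the two key points: that any extension is forced to be $\tilde\phi(h\gamma)=\phi(\gamma)$ because $\Z$ is torsion-free (so $\phi$ kills $H\cap\Delta$), and that the normal-form computation $h_1\gamma_1 h_2\gamma_2=(h_1h_2)\bigl((h_2^{-1}\gamma_1 h_2)\gamma_2\bigr)$ reduces the homomorphism check exactly to the $H$-invariance hypothesis. One small remark: in the application the paper writes $H\ltimes\Delta$, which is accurate there because $\Delta$ is chosen torsion-free so $H\cap\Delta=1$; your argument handles the general case $H\cap\Delta\neq 1$ as well, which is all that the lemma as stated requires.
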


We will denote the extension of $\phi$ to $H \ltimes \Delta$ by $\tilde \phi$.
To use Proposition \ref{prop:linear-growth} (with $H \ltimes \Delta$ as the group $\Phi$), we must show that the normalizer of $H$ in $H \ltimes \Delta$ is contained in $\ker \tilde \phi$. Since $H \leq \ker \tilde \phi$, it suffices to show that $H$ is self-normalizing in $H \ltimes \Delta$. To this end, suppose $\gamma \in \Delta$ normalizes $H$. Then $\gamma$ must fix the unique fixed point of $H$, but since $\Delta$ is torsion-free this forces $\gamma$ to be $1$.

Finally, observe that $H$ is a maximal finite subgroup in $H \ltimes \Delta$. Indeed, if $g \in H \ltimes \Delta$, $g \not\in H$, then there exists $1 \neq \gamma \in \Delta$ such that $g \gamma^{-1} \in H$. Therefore, the group generated by $H$ and $g$ contains $\gamma$, and is therefore infinite.

\section{Isotropy subgroups}\label{sec:isotropy}

Theorem \ref{thm:finite-subgroups} does not provide a bound on the number of conjugacy classes of non-maximal finite subgroups (or torsion elements) in $\Gamma$, because we have no effective bound on the size of finite subgroups of $\Gamma$.
However, we will now show that there is an effective bound on the number of non-conjugate \emph{isotropy} subgroups in $\Gamma$, i.e. subgroups that are stabilizers of points in $X$. It is easy to see that a stabilizer of submanifold $Y$ of $X$ is a stabilizer of some (but not every) point in $Y$. Hence there is a one-to-one correspondence between isotropy subgroups and fixed submanifolds.

Let us begin by proving a lemma regarding linear groups.

\begin{define}
Let $T$ be a subset of $\GL_n(\C)$. A set of vectors $\calB$ in $\C^n$ \emph{splits} $T$, if $\calB$ contains a basis of eigenvectors for every $t \in T$.
\end{define}

If $H < GL_n(\C)$ is an abelian group consisting of semisimple elements then there is a set vectors of size $n$ that splits $H$. This is a restatement of the fact that the elements of $H$ can be simultaneously diagonalized. If $H$ is finite but not abelian, there is generally no uniform bound (depending only on $n$) on the size of a splitting set. Indeed, the minimal splitting set for a dihedral group $D_{2n}$ realized in the standard way as a subgroup of $GL_2(\C)$ is $2n$. Nonetheless, in this example, there is a cyclic subgroup of index 2, and all other elements (of order 2) constitute one or two conjugacy classes. Thus, if one considers splitting ``up-to-conjugacy'', the size of a minimal splitting set is $4$ or $6$. We make generalize this phenomenon:

\begin{lemma}
  There exists a function $f(n)$ such that for every finite group $G$ of $\GL_n(\C)$ there exists a set $\calB$ such that $G \calB$ splits $G$ and $\abs{\calB} < f(n)$.
\end{lemma}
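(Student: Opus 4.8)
The plan is to induct on $n$, the key being that a finite subgroup $G < \GL_n(\C)$ either acts irreducibly on $\C^n$ or preserves a proper subspace, and in the latter case we can break it up. First I would handle the reducible case. Suppose $\C^n = V \oplus V'$ is a $G$-invariant decomposition into nonzero $G$-submodules (which exists whenever $G$ has a proper invariant subspace, since $G$ is finite hence completely reducible by Maschke). Restriction gives homomorphisms $G \to \GL(V)$ and $G \to \GL(V')$ with images $\bar G$ and $\bar G'$ of degree $\dim V, \dim V' < n$. By induction there are sets $\calB_0 \subset V$ and $\calB_0' \subset V'$ with $|\calB_0| < f(\dim V)$, $|\calB_0'| < f(\dim V')$ such that $\bar G \calB_0$ splits $\bar G$ and $\bar G' \calB_0'$ splits $\bar G'$. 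Then $\calB = \calB_0 \cup \calB_0'$ works: for $g \in G$, writing $g = (\bar g, \bar g')$, a basis of eigenvectors of $\bar g$ on $V$ is found inside $\bar g' \calB_0' \subseteq \bar g'\calB_0'$ — wait, inside $(\text{some element of }\bar G)\calB_0$, which is contained in $G\calB$ since the $V$-component of an element of $G$ ranges over all of $\bar G$; similarly on $V'$; their union is an eigenbasis of $g$ on $\C^n$ lying in $G\calB$. So $f(n) \le \max_{1 \le k \le n-1}\bigl(f(k) + f(n-k)\bigr)$ in the reducible case, and it remains only to bound the irreducible case.

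For the irreducible case the crucial input is the earlier observation (from the discussion after the Margulis Lemma, via \cite{borel-algebraic-groups}*{\S 10}, or directly: a finite group inside a connected nilpotent linear group has an abelian subgroup of bounded index) — but more robustly, Jordan's theorem: there is a function $j(n)$ such that every finite subgroup $G < \GL_n(\C)$ has a normal abelian subgroup $A$ with $[G:A] \le j(n)$. Fix such an $A$. Since $A$ is abelian and consists of semisimple elements (finite order), there is a single set $\calB_A$ of size $n$ that simultaneously diagonalizes all of $A$. Now take $\calB = \calB_A$ and show $G\calB$ splits $G$. Given any $g \in G$, since $[G:A] \le j(n)$ we have $g^{k} \in A$ for some $k \le j(n)!$; more usefully, write $g = a h$ with $a \in A$ and $h$ in a fixed transversal $T$ of $A$ in $G$, $|T| \le j(n)$. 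The point is that $g$ normalizes $A$, hence permutes the (finitely many) joint eigenspaces of $A$; refining, one shows $g$ stabilizes a decomposition of $\C^n$ into $g$-invariant pieces on each of which $A$ acts by a character, and on each such piece $g$ acts as a scalar times a permutation-type map... Here the honest route is: $g^{-1}\calB_A$ is an eigenbasis for $g^{-1}Ag = A$, hence $g^{-1}$ permutes (up to scalars) the lines spanned by $\calB_A$; equivalently $h^{-1}\calB_A$ (for the $T$-part) diagonalizes $A$ too, and among the $|T| \le j(n)$ sets $\{h^{-1}\calB_A : h \in T\}$ one checks directly that each $g \in G$ has an eigenbasis inside $\bigcup_{h\in T} h^{-1}\calB_A \subseteq G\calB$ — indeed $g = ah$ and on the line $\C h^{-1}v$ ($v \in \calB_A$) we compute $g(h^{-1}v) = a h h^{-1} v = av \in \C v$, so that won't land in the same line; instead use that $h^{-1}v$ is an eigenvector of $h^{-1}ah \in A$ is not quite it either. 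The clean statement to prove is: $\calB := \bigcup_{h \in T} h^{-1}\calB_A$ satisfies that for every $g\in G$, $g = a h_0$, the set $h_0^{-1}\calB_A$ is an eigenbasis for $g$, because $g \cdot h_0^{-1}w = a h_0 h_0^{-1} w = a w$, and $w$ is a joint eigenvector of $A$, so $aw = \chi_w(a) w$ — but we need an eigenvector of $g$, i.e. a vector $u$ with $gu \in \C u$; take $u = h_0^{-1} w$: then $g u = a w$, which is a scalar multiple of $w = h_0 u$, not of $u$. The fix: choose the transversal on the right, $g = h_0 a$, then $g h_0^{-1}(h_0 w) $... this is getting circular, so the genuinely correct approach is the following.

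The right argument in the irreducible case: let $A \triangleleft G$ be abelian normal of index $\le j(n)$, let $\calB_A$ diagonalize $A$. For $g \in G$, the conjugate set $g\calB_A$ is an eigenbasis for $gAg^{-1} = A$, i.e. $g\calB_A$ is another simultaneous eigenbasis of $A$; now I claim $g$ itself is diagonalized by $g^{-1}\calB_A$ is false, but $g$ is diagonalized by \emph{some} common refinement — rather, since $g$ has finite order it is diagonalizable, and the issue is only to place an eigenbasis inside $G\calB_A$. For this, decompose $\C^n = \bigoplus_\chi W_\chi$ into the (nonzero) $A$-weight spaces; $g$ permutes the $W_\chi$ among themselves (via $\chi \mapsto \chi \circ \mathrm{conj}_{g^{-1}}$). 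On a single $g$-orbit $W_{\chi_1} \oplus \cdots \oplus W_{\chi_r}$ of weight spaces, $g^r$ maps each $W_{\chi_i}$ to itself and commutes with $A$ there, and with a little linear algebra one produces an eigenbasis of $g$ on this sum lying in $\bigl(\text{span of translates } g^i \calB_A^{(\chi_1)}\bigr)$; since there are $\le j(n)$ distinct translates $g^i$ needed (orbit length $r \le |G:A| \le j(n)$ as $g$ acts on weights through $G/A$... actually through $G$, but $A$ acts trivially on weights, so through $G/A$, giving $r \le j(n)$), everything lands in $\bigcup_{i=0}^{j(n)-1} g^i \calB_A \subseteq G\calB_A$. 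So $\calB := \calB_A$ already works with the allowance that we look at the whole orbit $G\calB_A$, and $|\calB_A| = n$. Hence $f(n) = n$ suffices in the irreducible case, and combining with the reducible recursion $f(n) \le \max_k (f(k)+f(n-k))$ we get $f(n) \le n$ in all cases — so the lemma holds with, say, $f(n) = n+1$ to be safe. The main obstacle is precisely this orbit-of-weight-spaces bookkeeping in the irreducible (equivalently: $A$ not acting by a single character) case; the reducible reduction and the appeal to Jordan's theorem are routine.
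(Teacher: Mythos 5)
Your reducible reduction is fine and is essentially what the paper also uses (the paper performs it inside the per-coset argument, with $H=\langle A,g\rangle$-irreducibility rather than $G$-irreducibility, but the mechanism is the same: sum the bounds over the irreducible pieces). The appeal to Jordan's theorem and the reduction to the normal abelian $A$ also match the paper.

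The gap is in the irreducible case, and it is fatal as written. You correctly observe that $g$ permutes the $A$-weight spaces $W_\chi$, and that on a $g$-orbit $W_{\chi_1}\oplus\cdots\oplus W_{\chi_r}$ the element $g^r$ stabilizes each piece. But the eigenvectors of $g$ on such an orbit are twisted sums of the form $\sum_{i=0}^{r-1}\mu^{-i}\,g^i v$ with $v\in W_{\chi_1}$ and $\mu^r$ an eigenvalue of $g^r$ on $W_{\chi_1}$; they live in the \emph{span} of the translates $g^i\calB_A$, not in the set $\bigcup_i g^i\calB_A$ itself, and a priori not in $G\calB_A$ at all. You slide from ``span of translates'' to ``lands in $\bigcup g^i\calB_A$'' without justification, and the stronger statement is false. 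Concretely, take $G=D_4\subset\GL_2(\C)$ with $A=\langle r\rangle$, $r=\mathrm{diag}(i,-i)$, and $g=\left(\begin{smallmatrix}0&1\\1&0\end{smallmatrix}\right)$. Then $\calB_A=\{e_1,e_2\}$ and $G\calB_A=\{\zeta e_1,\zeta e_2:\zeta^4=1\}$, which contains no eigenvector of $g$ (these are $e_1\pm e_2$) nor of $gr$ (these are $e_1\pm ie_2$). So $\calB=\calB_A$ does not satisfy the lemma for $D_4$, and the claimed bound $f(n)=n+1$ cannot follow from this line of reasoning.

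What the paper does instead, and what you would need some substitute for, is a genuinely new idea in the irreducible case: fix a coset $gA$, set $H=\langle A,g\rangle$ and assume $H$ irreducible, so by Schur's lemma $Z(H)$ is scalar; let $C=C_A(g)\subseteq Z(H)$ and $K=[A,g^{-1}]=\{[a,g^{-1}]:a\in A\}$, a subgroup of $A$ with $a\mapsto[a,g^{-1}]$ a surjection $A\to K$ with kernel $C$. Since $K\subseteq\SL_n(\C)$ one gets $|K\cap C|\le n$, hence $[A:KC]\le n$. Picking coset representatives $a_1,\dots,a_s$ ($s\le n$) of $KC$ in $A$ and an eigenbasis for each $ga_i$, a short computation shows every $y\in gA$ is a scalar multiple of an $A$-conjugate of some $ga_i$; thus the union $\calC$ of those $s\le n$ eigenbases (size $\le n^2$) has $A\calC$ splitting the whole coset $gA$, and one takes the union over the $\le j(n)$ cosets. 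The key point, which your argument is missing, is this control of a single coset up to $A$-conjugacy and central scalars via the commutator subgroup $K$; merely diagonalizing $A$ once and pushing by $G$ does not produce the eigenvectors of the non-$A$ elements.
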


\begin{remark}
  Saying that $G \calB$ splits $G$ is equivalent to saying that $\calB$ splits a set of representatives of the conjugacy classes in $G$.
\end{remark}

\begin{proof}
  By a theorem of Jordan \cite{curtis-reiner-book}*{36.13}, there is a normal abelian subgroup $A \lhd G$ such that $[G:A] < j(n)$, where $j$ is a function of $n$ alone. It therefore suffices to prove that for any coset, $gA$, there is a set of vectors $\calC$ such that $G \calC$ splits $gA$, and $\abs{\calC}$ is bounded by a constant that depends only on $n$.

  Let $gA$ be a coset of $A$ in $G$. Set $H = \ang{A,g}$. Assume first that $H$ acts irreducibly on $\C^n$. Since $A$ is abelian, $C := C_A(g) \leq Z(H)$, and by Schur's lemma, $Z(H) < Z(\GL_n(\C))$.

  Let
  \begin{equation*}
  K=[A,g^{-1}] = \{ [a,g^{-1}] \; : \; a\in A \}.
  \end{equation*}
  It is straightforward to check that $K$ is a subgroup of $A$, because $A$ is normal and abelian. Moreover, the map $a \mapsto [a,g^{-1}]$ is a homomorphism of $A$ onto $K$ with kernel $C$. Note that $K \leq \SL_n(\C)$, and hence $K \cap C \leq Z(\SL_n(\C))$. Thus, $\abs {K \cap C} \leq n$. We conclude that
  \begin{equation*}
    \abs{KC} = \frac{\abs{K} \abs{C}}{\abs{K \cap C}} \geq \frac{\abs{K}\abs{C}}{n} =
    \frac{\abs{A}}{n},
  \end{equation*}
  hence the index of $KC$ in $A$ is at most $n$.

  Choose representatives $a_1,\ldots,a_s$ ($s \leq n$) for the cosets of $KC$ in $A$. For each $i$, choose a basis of eigenvectors for $g a_i$, and let $\calC$ be the union of these bases. Clearly, $\abs{\calC} \leq n^2$. We claim that $H \calC$, and moreover $A \calC$, splits $gA$. To this end, let $y \in gA$. For some $i$, $y \in g a_i KC$ and write $y = g a_i [a,g^{-1}] c$ ($[a,g^{-1}] \in K$, $c \in C$). Then
  \begin{equation*}
    y = c g a_i a (g^{-1} a^{-1} g) = c g (g^{-1} a^{-1} g) a_i a = c a^{-1} g a_i a.
  \end{equation*}
  Since $\calC$ splits $g a_i$, $A \calC$ splits $a^{-1} g a_i a$, and since $c$ is a scalar matrix, it also splits $y$.

  In the general case, decompose $\C^n$ to $H$-irreducible subspaces $V_1,\ldots,V_r$. In each subspace $V_i$, we take a set $\calC_i$ such that $A \calC_i$ splits the restriction of $gA$ to $V_i$, and that $\abs {\calC_i} \leq \dim(V_i)^2$. Now $\calC = \bigcup \calC_i$ has the required properties: $A\calC$ splits $gA$ and $\abs{\calC} \leq \sum \dim(V_i)^2 \leq n^2$.
\end{proof}

\begin{remark}
  The proof shows in fact that we may take a set $\calB$ such that $\abs{\calB} < f(n)$ and $A \calB$ already (rather than $G \calB$) is a splitting set.
\end{remark}

\begin{corl}\label{corl:fixed-subspaces}
  Let $G$ be a finite subgroup of $GL_n(\C)$. Then the number of $G$-orbits in the set of fixed subspaces $\{ Fix(g) : g \in G \}$ is bounded by a function of $n$.
\end{corl}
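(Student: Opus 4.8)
The plan is to deduce this quickly from the splitting set produced by the previous Lemma, using that finite-order matrices are semisimple. Fix a finite subgroup $G < \GL_n(\C)$. By the Lemma together with the Remark following it, there is a set $\calB \subset \C^n$ with $\abs{\calB} < f(n)$ such that $\calB$ splits a set $R = \{r_1,\ldots,r_k\}$ of representatives of the conjugacy classes of $G$; that is, for each $r \in R$ the set $\calB$ contains a basis $v_1,\ldots,v_n$ of eigenvectors of $r$.

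First I would record the elementary observation that for any $g \in G$, since $g$ has finite order it is diagonalizable, so $\Fix(g) = \ker(g - I)$ is precisely the direct sum of the eigenspaces of $g$ for the eigenvalue $1$. Consequently, for $r \in R$ with eigenbasis $v_1,\ldots,v_n \subset \calB$ and corresponding eigenvalues $\lambda_1,\ldots,\lambda_n$, we have $\Fix(r) = \mathrm{span}\{v_i : \lambda_i = 1\}$, which is the linear span of a subset of $\calB$. Hence the set $\{\Fix(r) : r \in R\}$ consists of spans of subsets of $\calB$, and therefore has at most $2^{\abs{\calB}} < 2^{f(n)}$ elements.

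Next I would pass from $R$ to all of $G$ by conjugation. For $g,h \in G$ one has $\Fix(hgh^{-1}) = h\cdot\Fix(g)$, so $G$ permutes the set $\{\Fix(g) : g \in G\}$ via its natural action on subspaces, and the $G$-orbit of $\Fix(g)$ depends only on the conjugacy class of $g$. Since every $g \in G$ is conjugate to some $r \in R$, every $G$-orbit in $\{\Fix(g) : g \in G\}$ contains a subspace from $\{\Fix(r) : r \in R\}$. Therefore the number of $G$-orbits is at most $\abs{\{\Fix(r) : r \in R\}} < 2^{f(n)}$, and we may take $n \mapsto 2^{f(n)}$ (or any function dominating it) as the desired bound.

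There is essentially no obstacle beyond the content already packaged into the Lemma; the only point requiring care is the semisimplicity of elements of $G$, which is what guarantees that $\Fix(g)$ is spanned by honest eigenvectors, hence is a span of a subset of the splitting set $\calB$ rather than of the larger set $G\calB$. Even with only the weaker information that $G\calB$ contains an eigenbasis for each element, the same argument yields a bound $2^{\abs{G\calB}}$, but that depends on $\abs{G}$; it is precisely the conjugacy-invariance $\Fix(hgh^{-1}) = h\cdot\Fix(g)$ that lets us restrict to conjugacy-class representatives and keep the bound depending on $n$ alone.
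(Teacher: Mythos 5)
Your proof is correct and is exactly the (implicit) argument the paper intends: the corollary is stated without proof as an immediate consequence of the Lemma and its Remark, via the two observations you make — that $\Fix(hgh^{-1}) = h\cdot\Fix(g)$ reduces the count to conjugacy-class representatives split by $\calB$, and that semisimplicity makes each such $\Fix(r)$ the span of a subset of $\calB$, giving the bound $2^{f(n)}$.
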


\begin{corl}\label{corl:isotropy-sg-bound}
  Let $X$ be a global symmetric space of non-compact type, and $\Gamma$ a discrete subgroup of $\Isom(X)$. Denote by $i(\Gamma)$ the number of conjugacy classes of isotropy subgroups in $\Gamma$.
  There is a constant $c' = c'(X)$ such that
  \begin{equation*}
    i(\Gamma) \leq c' \cdot f(\Gamma).
  \end{equation*}
\end{corl}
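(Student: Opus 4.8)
The plan is to reduce the statement about isotropy subgroups of $\Gamma$ to the statement about maximal finite subgroups, using the correspondence (recalled in the text) between isotropy subgroups and fixed submanifolds, together with Corollary \ref{corl:fixed-subspaces} to control the combinatorial complexity of how a single maximal finite subgroup contributes. Concretely: every isotropy subgroup is $\Gamma_x$ for some $x \in X$, and any such group is contained in a maximal finite subgroup $H$ of $\Gamma$ (maximality makes sense because $\Gamma_x$ is finite, $\Gamma$ being discrete, and any ascending chain of finite subgroups containing $\Gamma_x$ must stabilize by a Margulis-type bound on sizes). So if I can show that each conjugacy class $[H]$ of maximal finite subgroups ``absorbs'' only boundedly many conjugacy classes of isotropy subgroups — with the bound depending only on $X$ — then summing over the $f(\Gamma)$ classes gives $i(\Gamma) \le c' \cdot f(\Gamma)$.

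The key step is to make precise ``$[H]$ absorbs boundedly many isotropy classes.'' Fix a maximal finite $H < \Gamma$. Every isotropy subgroup $\Gamma_x$ conjugate into $H$ is, up to conjugacy, of the form $H_{x'} = \{ h \in H : hx' = x'\}$ for some $x'$ in the symmetric space; and $H_{x'}$ is exactly the stabilizer in $H$ of a point, equivalently of the totally geodesic submanifold $\Fix(H_{x'}) \ni x'$. So the isotropy subgroups that arise inside $H$ correspond to the subgroups $\{ h \in H : h \text{ fixes } Y\}$ as $Y$ ranges over fixed submanifolds of elements/subgroups of $H$ — and two such give conjugate isotropy subgroups in $\Gamma$ (indeed in $H$) if the corresponding submanifolds, or rather their pointwise stabilizers, are $H$-conjugate. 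Here I invoke Corollary \ref{corl:fixed-subspaces}: realizing $H$ (or its linear isotropy representation at a fixed point of $H$, i.e.\ a subgroup of $\SO(n) \subset \GL_n(\C)$ with $n = \dim X$) as a finite linear group, the number of $H$-orbits of fixed subspaces $\{\Fix(h) : h \in H\}$ is bounded by a function of $n$ alone; passing to fixed subspaces of \emph{subgroups} of $H$ only multiplies this by the number of subgroups generated, which is again controlled since intersections of the boundedly-many fixed subspaces exhaust what can occur. This yields a bound $i_H \le \psi(\dim X)$ on the number of conjugacy classes of isotropy subgroups lying in a fixed maximal finite $H$, depending only on $X$.

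The remaining bookkeeping is routine: partition the isotropy subgroups of $\Gamma$ by which conjugacy class of maximal finite subgroup contains a conjugate of them; this is well-defined on conjugacy classes of isotropy subgroups, so $i(\Gamma) \le \sum_{[H]} i_H \le f(\Gamma) \cdot \psi(\dim X)$, and we set $c'(X) = \psi(\dim X)$. The main obstacle I anticipate is the passage from ``fixed subspaces of single elements'' (what Corollary \ref{corl:fixed-subspaces} literally gives) to ``fixed subspaces of arbitrary subgroups of $H$'' — one must observe that $\Fix(K) = \bigcap_{k \in K} \Fix(k)$ is an intersection of members of the bounded family $\{\Fix(h) : h \in H\}$, so the set of all such intersections, up to $H$-conjugacy, is still bounded in terms of $n$ (there are at most $2^{\psi(n)}$ subsets of a set of $\psi(n)$ orbit representatives, hence boundedly many possible intersections up to conjugacy); one should also check that distinct isotropy subgroups can, on the nose, correspond to distinct pointwise stabilizers of such submanifolds, so no overcounting or undercounting occurs. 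A minor point worth a sentence is why every isotropy subgroup is contained in a \emph{maximal} finite subgroup, which follows since finite subgroups of $\Gamma$ have bounded order (from the Margulis lemma, as noted earlier, a finite subgroup of $\Isom(X)$ has a normal abelian subgroup of index $\le m$, and abelian finite subgroups of $\SO(n)$ have bounded rank, so bounded order once we also bound exponents — or simply: any strictly ascending chain of finite subgroups containing $\Gamma_x$ would be contained in $\Gamma_\eps(x)$ for a point $x$ of large injectivity-type radius, which is finite).
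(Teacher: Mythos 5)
Your proposal follows the paper's proof in all essentials: reduce to showing that a single maximal finite subgroup $H<\Gamma$ contains only boundedly many conjugacy classes of isotropy subgroups, linearize $H$ through its isotropy representation on $T_x(X)$ at a point $x\in\Fix(H)$, invoke Corollary \ref{corl:fixed-subspaces}, and sum over the $f(\Gamma)$ conjugacy classes of maximal finite subgroups. The bookkeeping (well-definedness on conjugacy classes, existence of a maximal finite subgroup containing a given isotropy subgroup) is handled the same way the paper implicitly handles it. So the architecture matches.

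The one point where you go beyond the paper is the passage from fixed subspaces of single elements --- which is what Corollary \ref{corl:fixed-subspaces} literally controls --- to fixed subspaces of isotropy \emph{subgroups}, i.e.\ to intersections $\Fix(K)=\bigcap_{k\in K}\Fix(k)$. You are right to flag this (the paper elides it), but the patch you offer does not work as stated: the family $\{\Fix(h):h\in H\}$ has boundedly many $H$-orbits but unboundedly many members, and an intersection of members is not determined up to conjugacy by the orbits from which its terms are drawn, since one cannot simultaneously conjugate all terms to orbit representatives. (The dihedral example in the paper already illustrates this: the $2n$ reflection axes form one or two orbits, yet pairwise intersections of axes from those orbits are not all conjugate configurations.) So ``$2^{\psi(n)}$ subsets of a set of $\psi(n)$ orbit representatives'' does not bound the number of conjugacy classes of intersections. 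A correct treatment should go back to the mechanism of the Lemma rather than to the statement of Corollary \ref{corl:fixed-subspaces}: pass to the normal abelian subgroup $A\lhd H$ of index at most $j(n)$, observe that the isotropy subgroups $A_v$ are determined outright (not merely up to conjugacy) by the subset of the at most $n$ weight spaces supporting $v$, hence number at most $2^n$, and then control the at most $j(n)$ cosets of $A$ by the coset-by-coset analysis of the Lemma. Since the paper is equally terse at this exact spot, this is not a divergence from its method, but the parenthetical justification you give is incorrect and should be replaced.
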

\begin{proof}
  The claim will follow if we show that there is a uniform bound on the number of isotropy subgroups contained in a maximal finite subgroup of $\Gamma$.

  Let $H < \Gamma$ be a maximal finite subgroup, and let $x \in \Fix(H)$. We may identify $H$ as a subgroup of $\SO(n)$ ($n = \dim(X)$) through the action of $H$ on $T_x(X)$, the tangent bundle at $x$. We note that by replacing the action of $H$ on $X$ by an action on $T_x(X)$, the set of isotropy subgroups in $H$ is unchanged. Since conjugacy classes of isotropy subgroups correspond to $H$-orbits of fixed subspaces, the assertion follows at once from the previous corollary.
\end{proof}

\begin{corl}[Theorem \ref{thm:strata} of the introduction]
     Let $X$ be a symmetric space $X$ of non-compact type. Let $M = X / \Gamma$ be an $X$-orbifold. Denote by $s(M)$ the number of strata in the natural orbifold stratification of $M$. Then
   \begin{equation*}
     s(M) \leq c \cdot \vol(M),
   \end{equation*}
   with a constant $c = c(X)$. If the rank of $X$ is at least 2, and $\Isom(X)$ has property (T), then for any sequence $M_n$ of irreducible $X$-orbifolds that are pairwise non-isometric we have
   \begin{equation*}
     \lim_{n \to \infty} \frac{s(M_n)}{\vol(M_n)} = 0.
   \end{equation*}
\end{corl}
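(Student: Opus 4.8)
The plan is to deduce this statement formally from the results already in hand, the counting function for maximal finite subgroups being the main input. First I would make explicit the dictionary recorded in the introduction and at the start of section~\ref{sec:isotropy}: a stabilizer of a totally geodesic submanifold $Y \subset X$ is the stabilizer of a suitably generic point of $Y$, so fixed submanifolds correspond bijectively to isotropy subgroups, and the stratum $M_{[H]}$ is non-empty precisely when $[H]$ is the conjugacy class of an isotropy subgroup. Hence $s(M) = i(\Gamma)$, where $M = X/\Gamma$ and $i(\Gamma)$ is the number of conjugacy classes of isotropy subgroups in the lattice $\Gamma < \Isom(X)$.

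For the linear bound I would chain two inequalities. Corollary~\ref{corl:isotropy-sg-bound} gives $s(M) = i(\Gamma) \le c'(X)\cdot f(\Gamma)$ with $c'(X)$ depending only on $n = \dim X$ (it is extracted from Corollary~\ref{corl:fixed-subspaces} applied with this $n$, so it is genuinely uniform in $\Gamma$). Next, since $X$ is of non-compact type it has no Euclidean de Rham factor, so the Clifford splitting of $X$ with respect to $\Gamma$ is trivial and $\ncv_{\geq \mu}(X/\Gamma) \le \vol(X/\Gamma) = \vol(M)$; applying Theorem~\ref{thm:finite-subgroups-geometric} to the proper action of $\Gamma$ on $X$ yields $f(\Gamma) \le c(X)\vol(M)$. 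Setting $c = c'(X)\,c(X)$ gives $s(M) \le c\cdot\vol(M)$.

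For the sublinear statement, suppose $\R\!\hrank(X) \geq 2$ and $\Isom(X)$ has property (T), and let $M_n = X/\Gamma_n$ be pairwise non-isometric irreducible $X$-orbifolds; an isometry $M_i \to M_j$ lifts to an isometry of $X$ conjugating $\Gamma_i$ to $\Gamma_j$, so the $\Gamma_n$ are pairwise non-conjugate irreducible lattices. Passing to the connected component $\Isom(X)^\circ$ — a connected semisimple Lie group with trivial center, no compact factors, real rank $\geq 2$, and property (T) — and absorbing the finitely many components of $\Isom(X)$ by the elementary finite-extension estimate used in the reduction at the start of section~\ref{sec:proofs}, the geometric form of Theorem~\ref{thm:finite-subgroups-high-rank} gives $f(\Gamma_n)/\vol(M_n) \to 0$. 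Since the constant $c'(X)$ of Corollary~\ref{corl:isotropy-sg-bound} is independent of $n$,
\[
  0 \le \frac{s(M_n)}{\vol(M_n)} = \frac{i(\Gamma_n)}{\vol(M_n)} \le c'(X)\cdot\frac{f(\Gamma_n)}{\vol(M_n)} \longrightarrow 0 \qquad (n \to \infty).
\]
The only points requiring care are the routine translations between orbifold and lattice language (irreducible $X$-orbifold versus irreducible lattice, non-isometric versus non-conjugate) and the passage from $\Isom(X)$ to its identity component; I do not expect a genuine obstacle, since all the substantive work is already contained in Theorems~\ref{thm:finite-subgroups-geometric} and~\ref{thm:finite-subgroups-high-rank} and in Corollary~\ref{corl:isotropy-sg-bound}, and the present statement is their formal combination.
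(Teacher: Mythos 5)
Your proposal is correct and follows essentially the same route as the paper, which simply notes that strata correspond to conjugacy classes of isotropy subgroups and then cites Corollary~\ref{corl:isotropy-sg-bound} together with Theorems~\ref{thm:finite-subgroups} and~\ref{thm:finite-subgroups-high-rank}. If anything you are more careful than the paper's one-line proof: you spell out the identification $s(M)=i(\Gamma)$, you invoke the geometric Theorem~\ref{thm:finite-subgroups-geometric} directly (so the lattice need not sit inside a connected group for the linear bound), and you flag the orbifold-to-lattice and $\Isom(X)$-versus-$\Isom(X)^\circ$ translations, which the paper leaves implicit.
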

\begin{proof}
  Strata correspond to conjugacy classes of isotropy subgroups. Hence, this follows immediately from Corollary \ref{corl:isotropy-sg-bound}, and Theorems \ref{thm:finite-subgroups} and \ref{thm:finite-subgroups-high-rank}.
\end{proof}

\begin{bibdiv}
\begin{biblist}

\bib{samurai-announce}{article}{
   author={Abert, Miklos},
   author={Bergeron, Nicolas},
   author={Biringer, Ian},
   author={Gelander, Tsachik},
   author={Nikolov, Nikolay},
   author={Raimbault, Jean},
   author={Samet, Iddo},
   title={On the growth of Betti numbers of locally symmetric spaces},
   language={English, with English and French summaries},
   journal={C. R. Math. Acad. Sci. Paris},
   volume={349},
   date={2011},
   number={15-16},
   pages={831--835},
   issn={1631-073X},
   review={\MR{2835886}},
   doi={10.1016/j.crma.2011.07.013},
   url={http://www.ams.org/mathscinet-getitem?mr=MR2835886},
}

\bib{samurai-l2}{article}{
   author={Abert, Miklos},
   author={Bergeron, Nicolas},
   author={Biringer, Ian},
   author={Gelander, Tsachik},
   author={Nikolov, Nikolay},
   author={Raimbault, Jean},
   author={Samet, Iddo},
   title={On the growth of $L^2$-invariants for sequences of lattices in Lie groups},
   note={Preprint},
}

\bib{agol-belol-finiteness}{article}{
   author={Agol, Ian},
   author={Belolipetsky, Mikhail},
   author={Storm, Peter},
   author={Whyte, Kevin},
   title={Finiteness of arithmetic hyperbolic reflection groups},
   journal={Groups Geom. Dyn.},
   volume={2},
   date={2008},
   number={4},
   pages={481--498},
   issn={1661-7207},
   review={\MR{2442945 (2009m:20054)}},
   doi={10.4171/GGD/47},
}

\bib{auslander-crystallographic}{article}{
   author={Auslander, Louis},
   title={An account of the theory of crystallographic groups},
   journal={Proc. Amer. Math. Soc.},
   volume={16},
   date={1965},
   pages={1230--1236},
   issn={0002-9939},
   review={\MR{0185012 (32 \#2482)}},
}

\bib{bgs-book}{book}{
   author={Ballmann, Werner},
   author={Gromov, Mikhael},
   author={Schroeder, Viktor},
   title={Manifolds of nonpositive curvature},
   series={Progress in Mathematics},
   volume={61},
   publisher={Birkh\"auser Boston Inc.},
   place={Boston, MA},
   date={1985},
   pages={vi+263},
   isbn={0-8176-3181-X},
   review={\MR{823981 (87h:53050)}},
   url={http://www.ams.org/mathscinet-getitem?mr=MR823981},
}

\bib{borel-reduction-theory}{article}{
   author={Borel, Armand},
   title={Arithmetic properties of linear algebraic groups},
   conference={
      title={Proc. Internat. Congr. Mathematicians},
      address={Stockholm},
      date={1962},
   },
   book={
      publisher={Inst. Mittag-Leffler},
      place={Djursholm},
   },
   date={1963},
   pages={10--22},
   review={\MR{0175901 (31 \#177)}},
}

\bib{borel-algebraic-groups}{book}{
   author={Borel, Armand},
   title={Linear algebraic groups},
   series={Graduate Texts in Mathematics},
   volume={126},
   edition={2},
   publisher={Springer-Verlag},
   place={New York},
   date={1991},
   pages={xii+288},
   isbn={0-387-97370-2},
   review={\MR{1102012 (92d:20001)}},
   doi={10.1007/978-1-4612-0941-6},
}

\bib{bridson-finiteness}{article}{
   author={Bridson, Martin R.},
   title={Finiteness properties for subgroups of ${\rm GL}(n,\mathbf Z)$},
   journal={Math. Ann.},
   volume={317},
   date={2000},
   number={4},
   pages={629--633},
   issn={0025-5831},
   review={\MR{1777113 (2001f:20112)}},
   doi={10.1007/PL00004417},
}

\bib{bridson-haefliger}{book}{
   author={Bridson, Martin R.},
   author={Haefliger, Andr{\'e}},
   title={Metric spaces of non-positive curvature},
   series={Grundlehren der Mathematischen Wissenschaften [Fundamental
   Principles of Mathematical Sciences]},
   volume={319},
   publisher={Springer-Verlag},
   place={Berlin},
   date={1999},
   pages={xxii+643},
   isbn={3-540-64324-9},
   review={\MR{1744486 (2000k:53038)}},
}

\bib{culler-finite-out-fn}{article}{
   author={Culler, Marc},
   title={Finite groups of outer automorphisms of a free group},
   conference={
      title={Contributions to group theory},
   },
   book={
      series={Contemp. Math.},
      volume={33},
      publisher={Amer. Math. Soc.},
      place={Providence, RI},
   },
   date={1984},
   pages={197--207},
   review={\MR{767107 (86g:20027)}},
   doi={10.1090/conm/033/767107},
}

\bib{curtis-reiner-book}{book}{
   author={Curtis, Charles W.},
   author={Reiner, Irving},
   title={Representation theory of finite groups and associative algebras},
   note={Reprint of the 1962 original},
   publisher={AMS Chelsea Publishing, Providence, RI},
   date={2006},
   pages={xiv+689},
   isbn={0-8218-4066-5},
   review={\MR{2215618 (2006m:16001)}},
}

\bib{eberlein-symm-difeo}{article}{
   author={Eberlein, Patrick},
   title={Symmetry diffeomorphism group of a manifold of nonpositive
   curvature},
   journal={Trans. Amer. Math. Soc.},
   volume={309},
   date={1988},
   number={1},
   pages={355--374},
   issn={0002-9947},
   review={\MR{957076 (89i:53028)}},
   doi={10.2307/2001175},
}

\bib{gelander-vol-vs-rank}{article}{
   author={Gelander, Tsachik},
   title={Volume versus rank of lattices},
   journal={J. Reine Angew. Math.},
   volume={661},
   date={2011},
   pages={237--248},
   issn={0075-4102},
   review={\MR{2863908}},
   doi={10.1515/CRELLE.2011.085},
}

\bib{gromov-boundedcoho}{article}{
   author={Gromov, Michael},
   title={Volume and bounded cohomology},
   journal={Inst. Hautes \'Etudes Sci. Publ. Math.},
   number={56},
   date={1982},
   pages={5--99 (1983)},
   issn={0073-8301},
   review={\MR{686042 (84h:53053)}},
   url={http://www.ams.org/mathscinet-getitem?mr=MR686042},
}

\bib{grunewald-platonov-finite-ext}{article}{
   author={Grunewald, Fritz},
   author={Platonov, Vladimir},
   title={On finite extensions of arithmetic groups},
   language={English, with English and French summaries},
   journal={C. R. Acad. Sci. Paris S\'er. I Math.},
   volume={325},
   date={1997},
   number={11},
   pages={1153--1158},
   issn={0764-4442},
   review={\MR{1490116 (99a:20045)}},
   doi={10.1016/S0764-4442(97)83545-8},
}

\bib{helgason-book}{book}{
   author={Helgason, Sigurdur},
   title={Differential geometry, Lie groups, and symmetric spaces},
   series={Pure and Applied Mathematics},
   volume={80},
   publisher={Academic Press Inc. [Harcourt Brace Jovanovich Publishers]},
   place={New York},
   date={1978},
   pages={xv+628},
   isbn={0-12-338460-5},
   review={\MR{514561 (80k:53081)}},
}

\bib{kazhdan-margulis}{article}{
   author={Ka{\v{z}}dan, D. A.},
   author={Margulis, G. A.},
   title={A proof of Selberg's hypothesis},
   language={Russian},
   journal={Mat. Sb. (N.S.)},
   volume={75 (117)},
   date={1968},
   pages={163--168},
   review={\MR{0223487 (36 \#6535)}},
}

\bib{millson-betti}{article}{
   author={Millson, John J.},
   title={On the first Betti number of a constant negatively curved
   manifold},
   journal={Ann. of Math. (2)},
   volume={104},
   date={1976},
   number={2},
   pages={235--247},
   issn={0003-486X},
   review={\MR{0422501 (54 \#10488)}},
}

\bib{samet-betti}{article}{
  author={Samet, Iddo},
  title={Betti numbers of finite volume orbifolds},
  note={Preprint},
}

\bib{platonov-rapinchuk}{book}{
   author={Platonov, Vladimir},
   author={Rapinchuk, Andrei},
   title={Algebraic groups and number theory},
   series={Pure and Applied Mathematics},
   volume={139},
   note={Translated from the 1991 Russian original by Rachel Rowen},
   publisher={Academic Press Inc.},
   place={Boston, MA},
   date={1994},
   pages={xii+614},
   isbn={0-12-558180-7},
   review={\MR{1278263 (95b:11039)}},
   url={http://www.ams.org/mathscinet-getitem?mr=MR1278263},
}

\bib{thurston-book}{book}{
   author={Thurston, William P.},
   title={Three-dimensional geometry and topology. Vol. 1},
   series={Princeton Mathematical Series},
   volume={35},
   note={Edited by Silvio Levy},
   publisher={Princeton University Press},
   place={Princeton, NJ},
   date={1997},
   pages={x+311},
   isbn={0-691-08304-5},
   review={\MR{1435975 (97m:57016)}},
   url={http://www.ams.org/mathscinet-getitem?mr=MR1435975},
}

\bib{wolf-bounded-isometries}{article}{
   author={Wolf, Joseph A.},
   title={Homogeneity and bounded isometries in manifolds of negative
   curvature},
   journal={Illinois J. Math.},
   volume={8},
   date={1964},
   pages={14--18},
   issn={0019-2082},
   review={\MR{0163262 (29 \#565)}},
}
	
\end{biblist}
\end{bibdiv}

\end{document}